\tikzset{help lines/.style={step=#1cm,very thin, color=gray},
help lines/.default=.5} 
\tikzset{thick grid/.style={step=#1cm,thick, color=gray},
thick grid/.default=1} 
\numberwithin{figure}{section}
\numberwithin{table}{section}
\theoremstyle{definition}
\theoremstyle{plain}
\newcommand{\thistheoremname}{}
\newtheorem*{genericthm*}{\thistheoremname}
\newenvironment{namedthm*}[1]
  {\renewcommand{\thistheoremname}{#1}%
   \begin{genericthm*}}
  {\end{genericthm*}}
\DeclareMathOperator{\ZZ}{\mathbb{Z}}
\newcommand{\lrabs}[1]{\left\lvert #1 \right\lvert}
\newcommand{\lrp}[1]{\left(#1\right)}
\newcommand{\lrb}[1]{\left[#1\right]}
\newtheorem{theorem}{Theorem}[section]
\newtheorem{lemma}[theorem]{Lemma}
\newtheorem{mytable}[theorem]{Table}
\newtheorem{proposition}[theorem]{Proposition}
\newtheorem{conjecture}[theorem]{Conjecture}
\theoremstyle{definition}
\theoremstyle{definition}
\theoremstyle{remark}
\numberwithin{equation}{section}
\newcommand{\ben}{\begin{equation}}
\newcommand{\een}{\end{equation}}
\newcommand{\SL}[1]{\ensuremath{{\mathrm {SL}_{ #1 }}}}
\NewDocumentCommand{\sump}{e{_}}
 {%
  \DOTSB
  \mathop{\IfNoValueTF{#1}{\sump@{}}{\sump@{#1}}}%
  \nolimits
 }
\newcommand{\sump@}[1]{\mathpalette\sump@@{#1}}
\newcommand{\sump@@}[2]{%
  \ifx#1\displaystyle
    {\sump@display{#2}}%
  \else
    \sum@\nolimits'_{#2}%
  \fi
}
\newcommand{\sump@display}[1]{%
  \sbox\z@{$\m@th\displaystyle\sum@\nolimits'$}%
  \sbox\tw@{$\m@th\displaystyle\sum@\limits_{#1}$}%
  \sbox\@tempboxa{$\m@th\displaystyle'$}
  \mathop{\sum@\nolimits' \kern-\wd\@tempboxa}\limits_{#1}%
  \ifdim\wd\z@>\wd\tw@
    \kern\dimexpr\wd\z@-\wd\tw@\relax
  \fi
}
\DeclareMathOperator{\Tr}{Tr}
\DeclareMathOperator{\lcm}{lcm}
\setlist[enumerate]{leftmargin=*,widest=0}
\setlist[itemize]{leftmargin=*,widest=0}
\def\subsection{\@startsection{subsection}{2}%
  \z@{.5\linespacing\@plus.7\linespacing}{.3\linespacing}%
  {\normalfont\bfseries}}
\def\subsubsection{\@startsection{subsubsection}{3}%
  \z@{.5\linespacing\@plus.7\linespacing}{.3\linespacing}%
  {\normalfont\bfseries}}
\begin{document}

\title{Signs of the Second Coefficients of Hecke Polynomials}

\subjclass[2020]{Primary 11F25; Secondary 11F72 and 11F11.}
\keywords{Hecke operator, Hecke polynomial, Eichler-Selberg trace formula}

\author[E. Ross]{Erick Ross}
\address[E. Ross]{School of Mathematical and Statistical Sciences, Clemson University, Clemson, SC, 29634}
\email{erickr@clemson.edu}

\author[H. Xue]{Hui Xue}
\address[H. Xue]{School of Mathematical and Statistical Sciences, Clemson University, Clemson, SC, 29634}
\email{huixue@clemson.edu}

\begin{abstract}
    Let $T_m(N, k, \chi)$ be the $m$-th Hecke operator of level $N$, weight $k \geq 2$, and nebentypus $\chi$, where $N$ is coprime to $m$.  We first show that for any given $m \geq 1$, the second coefficient of the characteristic polynomial of $T_m(N, k, \chi)$ is nonvanishing for all but finitely many triples $(N,k,\chi)$. Furthermore, for $\chi$ trivial and any fixed $m$, we determine the sign of the second coefficient for all but finitely many pairs $(N,k)$. Finally, for $\chi$ trivial and $m=3,4$, we compute the sign of the second coefficient for all pairs $(N,k)$.

\end{abstract}

\maketitle

\section{Introduction} \label{sec:intro}
Let $k\ge2$, $N\ge1$ be integers and let $\chi$ be a Dirichlet character modulo $N$ such that $\chi(-1) = (-1)^k$. The space of cuspforms of level $N$, weight $k$, and nebentypus $\chi$ is denoted by $S_k(\Gamma_0(N),\chi)$ \cite[Section 7.2]{cohen-stromberg}. For $m \geq 1$, let $T_m(N,k,\chi)$ be the $m$-th Hecke operator  on $S_k(\Gamma_0(N),\chi)$ \cite[Chapter 10]{cohen-stromberg}.  When the character $\chi$ is trivial, we will drop $\chi$ and simply write $T_m(N,k)$ and $S_k(\Gamma_0(N))$, respectively. 
Several interesting questions about these Hecke operators $T_m(N, k, \chi)$ have been studied.  For instance, let
$$\mathit{\Delta}(z)=q\prod_{n\geq1}(1-q^n)^{24}=\sum_{n\geq1}\tau(n)q^n$$
be the discriminant function,
which is the unique normalized cuspform of weight $12$, level one, and trivial nebentypus.  Lehmer \cite{Lehmerconjecture} conjectured that $\tau(m)\neq0$ for all $m\geq1$. Let $\Tr T_m(N, k)$ denote the trace of $T_m(N, k)$ on the space $S_k(\Gamma_0(N))$. The Lehmer Conjecture can then be reinterpreted as follows: $\Tr T_m(1, 12) \neq 0$ for all $m\geq 1$. More broadly, Rouse \cite{rouse} gave the Generalized Lehmer Conjecture, which predicts that $\Tr T_m(N,k)\neq0$ for even $k\geq 16$ or $k=12$ and $\gcd(m,N)=1$. He also proved this result for $m=2$. Recently, the nonvanishing of $\Tr T_{3}(1,k)$ was also established in \cite{chiriac}.

Now, let us write the characteristic polynomial for $T_m(N,k,\chi)$, the so-called Hecke polynomial,
as 
\begin{align}
T_m(N,k,\chi)(x)=x^n - a_1(m,N,k,\chi) x^{n-1} + a_2(m,N,k,\chi) x^{n-2} - \cdots + (-1)^n a_n(m,N,k,\chi),
\end{align}
where $n=\dim S_k(\Gamma_0(N),\chi)$.
Here, we will refer to $a_i(m,N,k,\chi)$ as the $i$-th coefficient of the Hecke polynomial. 
To ease notation we will simply write $a_i(m,N,k)$
if $\chi$ is trivial.
Using this notation, the Generalized Lehmer Conjecture concerns the nonvanishing of the first coefficient $a_1(m,N,k)$ of $T_m(N,k)(x)$. One may also consider the nonvanishing of the other coefficients, 
in particular the second coefficient, $a_2(m,N,k,\chi)$.
 Most recently, for trivial characters $\chi$, Clayton et al. \cite[Theorems 1.1 and 1.3]{clayton-et-al} computed the complete list of pairs $(N,k)$ for which the second coefficient of $T_2(N,k)(x)$ vanishes. In this paper,  we shall first extend the results of \cite{clayton-et-al} to study the nonvanishing of $a_2(m,N,k,\chi)$ for general $m$, $N$, $k$, and $\chi$. More precisely, we have the following result.

\begin{theorem} \label{thm:main}
    Let $m \geq 1$ be fixed. Suppose $\gcd(N,m)=1$, $k \geq 2$, and $\chi$ is a Dirichlet character modulo $N$ such that $\chi(-1) = (-1)^k$. Then $a_2(m,N,k,\chi)$ vanishes for only finitely many triples $(N,k,\chi)$.
\end{theorem}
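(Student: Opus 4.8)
The plan is to convert the statement about the second coefficient into one about traces of Hecke operators, and then to extract from the Eichler--Selberg trace formula a main term explicit enough to test for nonvanishing. Writing $\lambda_1,\dots,\lambda_n$ for the eigenvalues of $T_m(N,k,\chi)$, Newton's identity gives $a_2=\tfrac{1}{2}\big((\Tr T_m)^2-\Tr(T_m^2)\big)$. Since $\gcd(m,N)=1$, the Hecke relation $T_m^2=\sum_{d\mid m}d^{k-1}\chi(d)\,T_{m^2/d^2}$ lets me rewrite this purely in terms of traces of single Hecke operators,
\[
a_2(m,N,k,\chi)=\frac{1}{2}\Big((\Tr T_m)^2-\sum_{d\mid m}d^{k-1}\chi(d)\,\Tr T_{m^2/d^2}\Big).
\]
Because $m$ is fixed, the indices $m$ and $m^2/d^2$ range over a fixed finite set, which is what ultimately makes the error analysis uniform in $(N,k,\chi)$.

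Next I would apply the Eichler--Selberg trace formula to each trace and separate the identity (dimension) contribution --- present exactly when the index is a perfect square --- from the elliptic and hyperbolic contributions. For a square index $j=\ell^2$ the identity term equals $\tfrac{k-1}{12}\psi(N)\chi(\ell)\ell^{k-2}$, where $\psi(N)=[\Sl_2(\ZZ):\Gamma_0(N)]$; note every $m^2/d^2=(m/d)^2$ is automatically a square, while $m$ is a square only when $m=\square$. Collecting identity parts, the main term of $\sum_{d\mid m}d^{k-1}\chi(d)\Tr T_{m^2/d^2}$ is $\tfrac{k-1}{12}\psi(N)\chi(m)\sigma_1(m)\,m^{k-2}$ (the $d=m$ summand contributing $m^{k-1}\chi(m)\dim S_k(\Gamma_0(N),\chi)$ exactly), whereas $(\Tr T_m)^2$ contributes a main term only when $m=\square$, namely $\big(\tfrac{k-1}{12}\psi(N)\chi(\sqrt m)m^{k/2-1}\big)^2$. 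Hence, up to errors,
\[
a_2\;\approx\;\frac{\chi(m)\,m^{k-2}\,(k-1)\psi(N)}{24}\Big(\tfrac{(k-1)\psi(N)}{12}\,\mathbf{1}[\,m=\square\,]-\sigma_1(m)\Big).
\]
Since $|\chi(m)|=1$, this main term is nonzero except possibly on the locus $(k-1)\psi(N)=12\sigma_1(m)$ in the square case, which --- $m$ being fixed and $\psi(N)\ge N$ --- meets only finitely many pairs $(N,k)$.

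The crux, and the step I expect to be the main obstacle, is to bound the omitted elliptic, hyperbolic, and cross terms uniformly in $(N,k,\chi)$ and show they are of strictly smaller order than the main term off a finite set. For the $k$-aspect, the elliptic contributions involve the Gegenbauer factor $P_k(t,j)=\frac{\rho^{k-1}-\bar\rho^{k-1}}{\rho-\bar\rho}$ (with $\rho+\bar\rho=t$, $\rho\bar\rho=j$), which for $t^2<4j$ satisfies $|P_k(t,j)|\le 2\,j^{(k-1)/2}$ uniformly in $k$, while the hyperbolic contributions grow at most like $m^{(k-1)/2}$; both are of smaller order than the main term, whose growth is $\psi(N)\,m^{k-2}$ times a factor $k$ (or times $k^2\psi(N)$ when $m=\square$). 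For the $N$- and $\chi$-aspects, the elliptic and hyperbolic terms reduce to weighted counts of roots of $x^2-tx+j\equiv 0$ modulo $N$ twisted by $\chi$, and since each relevant discriminant $t^2-4j$ is a fixed nonzero integer, these counts are $O_m(N^{1/2+\varepsilon})=o(\psi(N))$. The cross term $2(\Tr T_m)_{\mathrm{main}}(\Tr T_m)_{\mathrm{err}}$ appearing when $m=\square$, together with the $k=2$ correction to the trace formula, are handled the same way and are of lower order. Combining these estimates gives $|a_2|\ge|\mathrm{main}|-|\mathrm{error}|>0$ as soon as $N$ or $k$ exceeds a bound depending only on $m$; since only finitely many triples $(N,k,\chi)$ lie below such a bound (boundedness of $\psi(N)\ge N$ forces finitely many $N$, each admitting finitely many characters $\chi$), only finitely many triples can make $a_2$ vanish.
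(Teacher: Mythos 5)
Your proposal is correct and follows essentially the same route as the paper: the identity $a_2=\tfrac12\bigl[(\Tr T_m)^2-\sum_{d\mid m}\chi(d)d^{k-1}\Tr T_{m^2/d^2}\bigr]$, extraction of the identity-motion term $\chi(m)\tfrac{k-1}{12}\psi(N)m^{k-2}\sigma_1(m)$ from the sum (plus the square of $A_{1,m}$ when $m$ is a square), and uniform bounds on the elliptic and hyperbolic contributions of the shape $O_m(2^{\omega(N)}m^{k/2})$ and $O_m(\sqrt{N}2^{\omega(N)}m^{k/2})$; your main term agrees exactly with the paper's equations \eqref{eqn:main-theorem-not-perfect-square-final-eqn} and \eqref{eqn:main-theorem-perfect-square-final-eqn}. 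The one place needing a touch more care is your unified square-case main term $\tfrac{(k-1)\psi(N)}{12}\mathbf{1}[m=\square]-\sigma_1(m)$: it is not enough that this is nonzero off the locus $(k-1)\psi(N)=12\sigma_1(m)$, since near that locus the main term has order only $k\psi(N)m^{k-2}$, which is smaller than your cross-term error $O\bigl(k\psi(N)\sqrt{N}2^{\omega(N)}m^{k-1}\bigr)$. This is easily repaired (off a finite set of pairs one has $(k-1)\psi(N)\geq 24\sigma_1(m)$, restoring the full $\bigl((k-1)\psi(N)\bigr)^2m^{k-2}$ order); the paper sidesteps the issue by absorbing the $\sigma_1(m)$ term into the error in the perfect-square case, so that the surviving main term $\chi(m)\tfrac{(k-1)^2}{144}\psi(N)^2m^{k-2}$ never degenerates.
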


When $\chi$ is trivial, we also explicitly determine the sign of the second coefficient.

\begin{theorem} \label{thm:main-trivial-char}
    Let $m \geq 1$ be fixed. Suppose $\gcd(N,m)=1$ and $k \geq 2$ is even. \\
    $(1)$: If $m$ is not a perfect square, then $a_2(m,N,k)$ is negative for all but finitely many pairs $(N,k)$. \\
    $(2)$: If $m$ is a perfect square, then $a_2(m,N,k)$ is positive for all but finitely many pairs $(N,k)$. 
\end{theorem}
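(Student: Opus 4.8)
The plan is to reduce the second coefficient to a difference of traces and then read off its sign from the leading term of the Eichler--Selberg trace formula. Writing $\lambda_1,\dots,\lambda_n$ for the eigenvalues of $T_m(N,k)$, Newton's identity for the elementary symmetric polynomial $e_2$ gives
\begin{align}
a_2(m,N,k)=\tfrac12\big((\Tr T_m(N,k))^2-\Tr(T_m(N,k)^2)\big).
\end{align}
Since $\gcd(m,N)=1$ and $\chi$ is trivial, the Hecke relation $T_m^2=\sum_{d\mid m}d^{k-1}T_{m^2/d^2}$ yields $\Tr(T_m(N,k)^2)=\sum_{d\mid m}d^{k-1}\Tr T_{m^2/d^2}(N,k)$, so everything is expressed through traces of single Hecke operators, to which I would apply the trace formula.

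Next I would isolate the identity contribution. In the Eichler--Selberg formula, $\Tr T_n(N,k)$ contains the main term $\frac{k-1}{12}\psi(N)\,n^{k/2-1}$ exactly when $n$ is a perfect square, together with elliptic, hyperbolic, and (for $k=2$) dual terms, where $\psi(N)=[\,\mathrm{SL}_2(\mathbb Z):\Gamma_0(N)\,]\asymp N$. The key input is a uniform bound: for fixed $m$, the elliptic and hyperbolic terms of $\Tr T_n(N,k)$ (with $n=m$ or $n=m^2/d^2$) are $O_{m,\varepsilon}(n^{(k-1)/2}N^{\varepsilon})$. For the elliptic term this follows from $|P_k(t,n)|\le C_m\,n^{(k-1)/2}$ on the range $t^2<4n$ together with the multiplicativity and boundedness of the local embedding factors. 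For the hyperbolic term the crucial point is that the level multiplicity is governed by a divisibility condition $\gcd(c,N/c)\mid(n/d-d)$ whose right-hand side is bounded in terms of $m$; this forces $\gcd(c,N/c)=O_m(1)$ and collapses the naive bound $O(N^{1/2+\varepsilon})$ down to $O_m(N^{\varepsilon})$. Since $m^2/d^2=(m/d)^2$ is always a perfect square, summing the main terms over $d\mid m$ gives
\begin{align}
\Tr(T_m(N,k)^2)=\frac{k-1}{12}\psi(N)\,m^{k-2}\sigma_1(m)+O_{m,\varepsilon}(m^{k-1}N^{\varepsilon}).
\end{align}

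Finally I would split into the two cases. If $m$ is not a perfect square, then $\Tr T_m(N,k)$ has no main term, so $\Tr T_m(N,k)=O_{m,\varepsilon}(m^{(k-1)/2}N^{\varepsilon})$ and $(\Tr T_m(N,k))^2=O_{m,\varepsilon}(m^{k-1}N^{\varepsilon})$. Substituting into the two displays gives
\begin{align}
2a_2(m,N,k)=-\frac{k-1}{12}\psi(N)\,m^{k-2}\sigma_1(m)+O_{m,\varepsilon}(m^{k-1}N^{\varepsilon}),
\end{align}
and since the negative main term grows like $(k-1)\psi(N)m^{k-2}$ while the error is $O_{m,\varepsilon}(m^{k-1}N^{\varepsilon})$, the former dominates whenever $k$ or $N$ is sufficiently large; hence $a_2<0$ for all but finitely many $(N,k)$. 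If instead $m=s^2$ is a perfect square, then $\Tr T_m(N,k)=\frac{k-1}{12}\psi(N)s^{k-2}+O_{m,\varepsilon}(s^{k-1}N^{\varepsilon})$, so $(\Tr T_m(N,k))^2$ has leading order $\asymp(k-1)^2N^2 s^{2k-4}$, whereas $\Tr(T_m(N,k)^2)\asymp (k-1)N s^{2k-4}$. The square of the trace thus exceeds $\Tr(T_m^2)$ by a factor of order $(k-1)N$, so $2a_2>0$ for all but finitely many $(N,k)$, giving $a_2>0$.

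The main obstacle is the uniformity of the auxiliary bounds in $N$ and $k$ simultaneously. Controlling the $k$-aspect is routine from $|P_k(t,n)|\le C_m n^{(k-1)/2}$, but in the $N$-aspect one must ensure that $(\Tr T_m(N,k))^2$ is genuinely of smaller order than the main term of $\Tr(T_m(N,k)^2)$: the latter is only of order $\psi(N)\asymp N$, so any hyperbolic multiplicity of size $N^{1/2+\varepsilon}$ would, after squaring, swamp it and destroy the argument. Establishing that the relevant divisibility condition caps $\gcd(c,N/c)$ by a constant depending only on $m$, thereby reducing the hyperbolic contribution to $O_m(N^{\varepsilon})$, is the delicate step on which the whole sign determination rests; once it is in place, the finitely many exceptional $(N,k)$ are precisely the small pairs where the explicit error term can exceed the main term.
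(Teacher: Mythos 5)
Your overall strategy is the same as the paper's: Newton's identity plus the Hecke relation to write $2a_2=(\Tr T_m)^2-\sum_{d\mid m}d^{k-1}\Tr T_{m^2/d^2}$, then the Eichler--Selberg formula with the identity contribution $\frac{k-1}{12}\psi(N)n^{k/2-1}$ appearing exactly for perfect-square index, and error bounds uniform in $N$ and $k$. However, the claim you single out as the crucial step is false. You assert that the hyperbolic term is $O_{m,\varepsilon}(n^{(k-1)/2}N^{\varepsilon})$ because the condition $\gcd(\tau,N/\tau)\mid (d-n/d)$ has right-hand side bounded in terms of $m$, forcing $\gcd(\tau,N/\tau)=O_m(1)$. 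This fails precisely for the divisor $d=\sqrt{n}$: there $d-n/d=0$, the divisibility condition becomes vacuous (everything divides $0$), and \emph{every} $\tau\mid N$ contributes. Since every index $n=m^2/d^2$ occurring in $\Tr(T_m^2)$ is a perfect square (as is $n=m$ itself in case $(2)$), this degenerate divisor is always present, and the hyperbolic contribution can genuinely be as large as $\sqrt{N}\,2^{\omega(N)}n^{(k-1)/2}$ --- take $N$ a perfect square and $\tau=\sqrt{N}$, which alone contributes $\phi(\sqrt{N})$. So your displayed asymptotic $\Tr(T_m^2)=\frac{k-1}{12}\psi(N)m^{k-2}\sigma_1(m)+O_{m,\varepsilon}(m^{k-1}N^{\varepsilon})$ is not correct as stated; the error is only $O_{m}(m^{k-1}\sqrt{N}\,2^{\omega(N)})$.

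Fortunately the argument does not actually need the $N^{\varepsilon}$ bound where you invoke it. The squaring danger you describe concerns $(\Tr T_m)^2$, and there the strong bound \emph{is} available in case $(1)$: when $m$ is not a perfect square, no divisor $d$ of $m$ satisfies $d=m/d$, so every hyperbolic term of $\Tr T_m$ is subject to a nontrivial divisibility constraint and $\Tr T_m=O_m(2^{\omega(N)}m^{k/2})=O_{m,\varepsilon}(N^{\varepsilon}m^{k/2})$. For the subtracted sum $\sum_d d^{k-1}\Tr T_{m^2/d^2}$, and for $\Tr T_m$ itself in case $(2)$, the correct weaker bound $O(\sqrt{N}\,2^{\omega(N)})$ on the hyperbolic term still suffices, since $\sqrt{N}\,2^{\omega(N)}=o(\psi(N))$ and, after squaring in case $(2)$, the cross term $k\psi(N)\cdot\sqrt{N}\,2^{\omega(N)}m^{k}$ is still dominated by the main term $k^2\psi(N)^2m^{k-2}$ as $N+k\to\infty$. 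This is exactly the case split the paper makes (its Lemma 3.3 gives $|d-m/d|\cdot 2^{\omega(N)}$ when $d\neq\sqrt{m}$ and only $\sqrt{N}\cdot 2^{\omega(N)}$ in general). To repair your proof, replace the false uniform $O_m(N^{\varepsilon})$ claim by this two-case bound and track the $\sqrt{N}$ through the final comparison; the sign conclusions then follow as you describe.
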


We show these two theorems by first expressing $a_2(m,N,k,\chi)$ in terms of traces of various Hecke operators (Lemma \ref{lem:a2-coeff-formula}). These traces can each be evaluated by the Eichler-Selberg trace formula \eqref{eqn:eichler-selberg-trace-formula}.  From this formula, we can then identify the dominant terms for these traces, coming from the Hecke operators with perfect square index (Lemma \ref{lem:main-lemma-perfect-square}).  This allows us to determine the asymptotic growth of $a_2(m,N,k,\chi)$, which then yields Theorems \ref{thm:main} and \ref{thm:main-trivial-char}.

In fact, our method is effective: for any given $m$, these exceptional pairs can be computed explicitly. As an illustratation of the two cases in Theorem \ref{thm:main-trivial-char},
when $\chi$ is trivial and $m=3,4$, we carry out the details to compute all the exceptional pairs.

\begin{theorem} \label{thm:T3}
    Suppose that $\gcd(N,3)=1$ and that $k \geq 2$ is even.
    Then  $a_2(3,N,k)$ is positive or zero only for the pairs $(N,k)$ given in Table \ref{table:T3-pairs-dim-a2}.
\end{theorem}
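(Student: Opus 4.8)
The plan is to specialize the formula of Lemma \ref{lem:a2-coeff-formula} to $m=3$ with trivial character and then make the asymptotic analysis behind Theorem \ref{thm:main-trivial-char} completely effective. Because $\gcd(3,N)=1$, the Hecke relation gives $T_3^2 = T_9 + 3^{k-1}\Id$ on $S_k(\Gamma_0(N))$, so from $2a_2 = (\Tr T_3)^2 - \Tr(T_3^2)$ we obtain
\[ 2\,a_2(3,N,k) = \big(\Tr T_3(N,k)\big)^2 - \Tr T_9(N,k) - 3^{\,k-1}\dim S_k(\Gamma_0(N)). \]
By Theorem \ref{thm:main-trivial-char}(1) the right-hand side is negative for all but finitely many $(N,k)$; the whole task is therefore to identify that finite set explicitly and to verify which of its members actually give $a_2 \geq 0$.

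First I would substitute the Eichler-Selberg trace formula \eqref{eqn:eichler-selberg-trace-formula} for $\Tr T_3$ and $\Tr T_9$ together with the dimension formula for $\dim S_k(\Gamma_0(N))$. Since $9$ is a perfect square and $3$ is not, Lemma \ref{lem:main-lemma-perfect-square} isolates the two dominant, negatively-signed contributions to $2a_2$: the identity term $\tfrac{k-1}{12}\psi(N)\,3^{k-2}$ of $\Tr T_9$ and the term $3^{k-1}\dim S_k(\Gamma_0(N))$, whose sum is of exact order $3^{k-1}(k-1)\psi(N)$. Every other quantity -- the elliptic and hyperbolic parts of both traces and the lower-order part of the dimension -- I would bound explicitly. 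The crucial structural point is that $\Tr T_3$ carries no identity term, so $(\Tr T_3)^2 = O\!\big(3^{k-1}4^{\omega(N)} + N^{1+\varepsilon}\big)$, while the elliptic local factors are $O(2^{\omega(N)}) = N^{o(1)}$ and the class numbers and Kronecker symbols attached to the finitely many discriminants $t^2-4m$ (with $m\in\{3,9\}$) admit explicit constant bounds. Thus the negative main term dominates once $N$ or $k$ is large.

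Carrying these estimates through with explicit constants, I would extract thresholds $N_0$ and $k_0$ so that $a_2(3,N,k) < 0$ whenever $N \geq N_0$ or $k \geq k_0$, leaving a finite, explicitly listed collection of pairs. For each remaining pair I would evaluate $a_2(3,N,k)$ directly -- either from the trace formula via the displayed identity, or from the characteristic polynomial computed on an explicit model of $S_k(\Gamma_0(N))$ -- and retain exactly those with $a_2 \geq 0$. Adjoining the pairs with $\dim S_k(\Gamma_0(N)) \leq 1$, for which $a_2$ is vacuously zero, produces the complete list recorded in Table \ref{table:T3-pairs-dim-a2}.

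The main obstacle will be the second step: making the bounds on the elliptic and hyperbolic terms sharp enough that $N_0$ and $k_0$ are genuinely small, so that the residual set can be enumerated rather than merely known to be finite. The delicate regime is small weight -- above all $k=2$, where the coefficient $\tfrac{k-1}{12}$ is smallest and the trace formula carries its additional weight-two correction -- paired with moderate $N$, where the lower-order terms are largest relative to the main term; here one must track the constants carefully and treat $k=2$ as a separate case in order to close the gap.
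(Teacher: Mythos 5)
Your proposal follows essentially the same route as the paper: express $2a_2(3,N,k)=(\Tr T_3)^2-\Tr T_9-3^{k-1}\Tr T_1$, isolate the negative main term $\asymp (k-1)\psi(N)3^{k-1}$ coming from the identity contributions $A_{1,9}$ and $3^{k-1}A_{1,1}$, bound every other trace-formula term explicitly in terms of $2^{\omega(N)}$ and $\psi(N)$ to get thresholds in $N$ and $k$, and finish the residual finite set by direct computation. The one slip is the stated bound $(\Tr T_3)^2=O\bigl(3^{k-1}4^{\omega(N)}+N^{1+\varepsilon}\bigr)$: an additive $N^{1+\varepsilon}$ would swamp the main term at fixed $k$, but the elliptic estimates you actually describe give the correct $O\bigl(4^{\omega(N)}3^{k}\bigr)$, which is exactly what the paper uses.
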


\begin{theorem} \label{thm:T4}
    Suppose that $\gcd(N,4)=1$ and that $k \geq 2$ is even.
    Then  $a_2(4,N,k)$ is negative or zero only for the pairs $(N,k)$ given in \cite[Table $m=4$]{ross-code}.
\end{theorem}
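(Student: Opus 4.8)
The plan is to run the effective version of the argument behind Theorem \ref{thm:main-trivial-char}, specialized to the perfect square $m=4$, and then discharge the resulting finite list of candidate pairs by direct computation. First I would apply Lemma \ref{lem:a2-coeff-formula}, which expresses $a_2(m,N,k,\chi)$ through $\Tr T_m$ and the traces $\Tr T_{m^2/d^2}$ for $d\mid m$. Specializing to $m=4$, trivial $\chi$, and $\gcd(N,4)=1$, the Hecke relation $T_4(N,k)^2 = T_{16}(N,k) + 2^{k-1}T_4(N,k) + 4^{k-1}T_1(N,k)$ together with $a_2 = \tfrac12\big((\Tr T_4)^2 - \Tr(T_4^2)\big)$ gives
\[
a_2(4,N,k) = \tfrac12\Big((\Tr T_4)^2 - \Tr T_{16} - 2^{k-1}\Tr T_4 - 4^{k-1}\dim S_k(\Gamma_0(N))\Big),
\]
where I used $\Tr T_1 = \dim S_k(\Gamma_0(N))$. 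This writes $a_2(4,N,k)$ entirely through $\Tr T_4$, $\Tr T_{16}$, and the dimension, each of which is computable by the Eichler--Selberg trace formula \eqref{eqn:eichler-selberg-trace-formula}.

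Next I would substitute the trace formula and isolate the main term via Lemma \ref{lem:main-lemma-perfect-square}. Since $16$, $4$, and $1$ are all perfect squares, each of $\Tr T_{16}$, $\Tr T_4$, and $\dim S_k$ carries a nonzero identity term $\frac{k-1}{12}\psi(N)\,s^{k-2}$ with $s=4,2,1$, where $\psi(N) := [\Sl_2(\BZ):\Gamma_0(N)]$. Collecting these, the identity contribution to $\Tr(T_4^2)$ equals $\frac{k-1}{12}\psi(N)\,4^{k-2}\sigma_1(4) = 7\cdot\frac{k-1}{12}\psi(N)\,4^{k-2}$, while the dominant identity-squared part of $(\Tr T_4)^2$ equals $\big(\frac{k-1}{12}\psi(N)\big)^2 4^{k-2}$. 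Hence the main term of $a_2(4,N,k)$ is
\[
\frac{4^{k-2}}{2}\cdot\frac{k-1}{12}\psi(N)\Big(\tfrac{k-1}{12}\psi(N) - 7\Big),
\]
which is positive precisely when $\frac{k-1}{12}\psi(N) > 7$. This is exactly the effective form of Theorem \ref{thm:main-trivial-char}(2) at $m=4$: the main term becomes positive once the dimension parameter passes the threshold $\sigma_1(4)=7$.

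The remaining work is to bound the error, namely the elliptic, hyperbolic, and dual contributions to $\Tr T_4$ and $\Tr T_{16}$ (including the cross terms they create inside $(\Tr T_4)^2$), by an explicit function of $N$ and $k$, and to check that this bound is dominated by the main term outside an explicit finite region of pairs $(N,k)$. Since $\psi(N)\ge N$ and $k\ge2$, the failure of $\frac{k-1}{12}\psi(N) > C$ occurs for only finitely many $(N,k)$ for any fixed $C$, so this confines every potentially non-positive value of $a_2(4,N,k)$ to an explicit finite set. Finally, for each pair in that set I would compute $a_2(4,N,k)$ exactly and record those where it is negative or zero; this is precisely the computation underlying \cite[Table $m=4$]{ross-code}.

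I expect the main obstacle to be the explicit, uniform error analysis. The elliptic terms involve class numbers and the weight-$k$ Gegenbauer-type polynomials $P_k(t,m)$, whose magnitude is of size $m^{(k-1)/2}$ --- only a fixed power of $m^{1/2}$ larger than $m^{(k-2)/2}$ but lacking the identity term's linear factor $(k-1)$, so that the identity term wins only once $k$ is large relative to $\sqrt m$. Making the constants in these bounds sharp enough, uniformly in both $N$ and $k$, to keep the exceptional region small enough to enumerate --- and correctly accounting for the cross terms between the identity and error parts of $\Tr T_4$ in $(\Tr T_4)^2$ --- is the delicate part. The perfect square case $m=4$ is in this respect more involved than the non-square case $m=3$ of Theorem \ref{thm:T3}, where $\Tr T_3$ carries no identity term and $(\Tr T_3)^2$ is automatically of lower order.
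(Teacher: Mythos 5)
Your proposal follows essentially the same route as the paper: express $a_2(4,N,k)$ via Lemma \ref{lem:a2-coeff-formula} as $\tfrac12[(\Tr T_4)^2 - \Tr T_{16} - 2^{k-1}\Tr T_4 - 4^{k-1}\Tr T_1]$, isolate the dominant $(A_{1,4})^2 = \bigl(\tfrac{k-1}{12}\psi(N)\bigr)^2 4^{k-2}$ contribution (the paper folds the $\sigma_1(4)=7$ identity terms of the subtracted traces into the $O(1/N)$ error rather than keeping them in the main term, but this is only a bookkeeping difference), bound the remaining elliptic/hyperbolic/cross terms explicitly via the $\theta_i(N)$ quantities of Lemma \ref{lem:omega-over-psi-bounds}, and finish with a computer check of the finite exceptional region. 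The delicate point you flag --- handling the cross terms when squaring $\Tr T_4$ --- is exactly what the paper's Lemma \ref{lem:trace-T4-bound1} addresses by first establishing a nonnegative lower bound on $\Tr T_4$ under the hypothesis $\tfrac{k-1}{48}\ge 7\theta_3(N)+\tfrac14\theta_1(N)$ before squaring.
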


The paper is organized as follows. In Section \ref{sec:trace}, following the idea in \cite{clayton-et-al}, we express the second coefficient $a_2(m,N,k,\chi)$ in terms of traces of Hecke operators. We also state the Eichler-Selberg trace formula to compute these traces. Section \ref{sec:estimate} is preparatory and establishes estimates on certain terms in the Eichler-Selberg trace formula. In Section \ref{sec:proofofmain}, we prove Theorems \ref{thm:main} and \ref{thm:main-trivial-char}. In Sections \ref{sec:proofofT3} and \ref{sec:proofofT4} we apply the techniques developed in Section \ref{sec:proofofmain} to the cases of $m=3$ and $m=4$, and prove Theorems \ref{thm:T3} and \ref{thm:T4}, respectively. Section \ref{sec:discussion} discusses some related questions.

\section{Second coefficients in terms of traces}
\label{sec:trace}
Following \cite[Proposition 2.1]{clayton-et-al}, we first derive a formula for $a_2(m,N,k,\chi)$  in terms of traces of Hecke operators.
\begin{lemma} \label{lem:a2-coeff-formula}
    For convenience of notation, let $T_m$ denote $T_m(N,k,\chi)$. Then 
    \begin{align}     
    a_2(m,N,k,\chi) = \frac{1}{2} \lrb{ \left(\Tr T_m\right)^2 - \sum_{d \mid m} \chi(d) d^{k-1} \Tr T_{m^2/d^2} }. \end{align}
\end{lemma}
\begin{proof}
    Let $\lambda_1 \ldots \lambda_n$ be the eigenvalues of $T_m$. By the definition of characteristic polynomial, we have 
    \begin{align}
        a_2(m,N,k,\chi) 
        &= \sum_{1 \leq i < j \leq n} \lambda_i \lambda_j \\
        &= \frac12 \lrb{ \lrp{ \sum_{1 \leq i \leq n} \lambda_i }^2 - \sum_{1 \leq i \leq n} \lambda_i^2 } \\
        &= \frac12 \lrb{ (\Tr T_m)^2 - \Tr T_m^2 }. 
    \end{align}
    On the other hand, recall the following formula  \cite[Theorem~10.2.9]{cohen-stromberg} for Hecke operators: 
    $$ T_m^2 = \sum_{d\mid m} \chi(d) d^{k-1}  T_{m^2/d^2} .$$
    Thus, 
    \begin{align}
        a_2(m,N,k,\chi) &= \frac12 \lrb{ (\Tr T_m)^2 - \Tr T_m^2 }  \\
        &= \frac12 \lrb{ (\Tr T_m)^2 - \sum_{d \mid m} \chi(d) d^{k-1} \Tr T_{m^2/d^2} },
    \end{align}
as desired.
\end{proof}

Next, we state the Eichler-Selberg trace formula in order to give an explicit formula for the traces appearing in Lemma \ref{lem:a2-coeff-formula}.  Let $m \geq 1$, $N \geq 1$, $k \geq 2$, and $\chi$ be a Dirichlet character modulo $N$ such that $\chi(-1) = (-1)^k$. From \cite[pp.~370-371]{knightly-li}, and borrowing some notation from \cite[24.4.11]{cohen-stromberg}, the Eichler-Selberg trace formula is given by
\begin{equation} \label{eqn:eichler-selberg-trace-formula}
\Tr T_m(N,k,\chi) = A_{1,m} - A_{2,m} - A_{3,m} + A_{4,m} ,
\end{equation}
where
\begin{align}
    \label{eqn:A1m-formula}
    A_{1,m} &= \chi(\sqrt{m}) \frac{k-1}{12} \psi (N) m^{k/2-1},  \\
   \label{eqn:A2m-formula}
   A_{2,m} &= \frac{1}{2} \sum_{t^2 < 4m} U_{k-1}(t,m) \sum_{n}  h_w \lrp{ \frac{t^2-4m}{n^2} } \mu (t,n,m), \\
   \label{eqn:A3m-formula}
   A_{3,m} &= \frac{1}{2} \sum_{d|m} \min(d,m/d)^{k-1} \sum_{\tau} \phi ( \gcd(\tau, N / \tau )) \chi(y_{\tau}), \\
   \label{eqn:A4m-formula}
   A_{4,m} &= \begin{dcases} 
      \sum_{\substack{c|m \\ (N, m/c) = 1}} c & \text{if $k=2$ and $\chi = \chi_0$},\\
      0 & \text{if $k>2$ or $\chi \neq \chi_0$} .\\
   \end{dcases}
\end{align}

Here, we have the following notation.

\begin{itemize}
  \item $\chi(\sqrt{m})$ is interpreted as $0$ if $m$ is not a perfect square. 
  
  \item $\psi (N) = \lrb{\, \SL{2}(\ZZ) \, : \, \Gamma_0 (N) \,} = N \prod_{p|N} \lrp{ 1 + \frac{1}{p} }$.
  
  \item The outer summation in $A_{2,m}$ runs over all $t \in \ZZ$ such that $t^2 < 4m$. Note that the terms corresponding to $t=t_0$ and $t=-t_0$ coincide.

  \item $U_{k-1}(t,m)$ denotes the Lucas sequence of the first kind. In particular, $U_{k-1}(t,m) = \frac{\rho^{k-1} - \bar{\rho}^{k-1}}{\rho - \bar{\rho}}$ where $\rho, \bar{\rho}$ are the two roots of the polynomial $X^2-tX+m$.
  
  \item The inner summation in $A_{2,m}$ runs through all positive integers $n$ such that $n^2 \, | \, (t^2 -4m)$ and $\frac{t^2-4m}{n^2} \equiv 0,1 \pmod{4}$. 

  \item $h_w \lrp{ \frac{t^2-4m}{n^2} }$ is the weighted class number of the imaginary quadratic order with discriminant $\frac{t^2-4m}{n^2}$. This is the usual class number, divided by $2$ (respectively $3$) if the discriminant is $-4$ (respectively $-3$). For our purposes, the first few of them are given explicitly in Table \ref{table:weighted-class-numbers} below. 

  \item $\displaystyle \mu (t,n,m) = \frac{\psi (N)}{\psi (N/N_n)} \sideset{}{'} \sum_{c \!\! \mod N} \chi(c)$, where $N_n = \gcd(N,n)$, and the primed summation runs through all elements $c$ of $(\ZZ / N \ZZ)^{\times}$ which lift to solutions of $c^2-tc+m \equiv 0 \pmod{NN_n}$.

  \item The outer summation for $A_{3,m}$ runs through all positive divisors $d$ of $m$. Note that the terms corresponding to $d=d_0$ and $d=m/d_0$ coincide.
  
  \item The inner summation for $A_{3,m}$ runs over all positive divisors $\tau$ of $N$ such that $\gcd(\tau, N/\tau)$ divides $\gcd(N/N_{\chi}, d-m/d)$. Here $N_{\chi}$ is the conductor of $\chi$.

  \item $\phi$ is the Euler totient function.

  \item $y_{\tau}$ is the unique integer modulo $\lcm(\tau, N/\tau)$ determined by the congruences $y_{\tau} \equiv d \pmod{\tau}$ and $y_{\tau} \equiv \frac{m}{d} \pmod{\frac{N}{\tau}}$.

  \item $\chi_0$ denotes the trivial character modulo $N$.

  \item Throughout, remember that $\chi$ is a character modulo $N$, so $\chi(a) = 0$ if $\gcd(a,N) > 1$, even in the trivial character case.
  
\end{itemize}

\begin{mytable}[{Weighted class numbers; \cite[p.~345]{knightly-li}, \cite[A014600]{oeis}}] \label{table:weighted-class-numbers} 
$$
\begin{array}{|c|c|c|c|c|c|c|c|c|c|c|c|}
\hline
 n & -3 & -4 & -7 & -8 & -11 & -12 & -15 & -16 & -19 & -20 & -23 \\
\hline
 h_w(n) & \frac{1}{3} & \frac{1}{2} & 1 & 1 & 1 & 1 & 2 & 1 & 1 & 2 & 3 \\
 \hline \hline
 n & -24 & -27 & -28 & -31 & -32 & -35 & -36 & -39 & -40 & -43 & -44 \\
 \hline
 h_w(n) & 2 & 1 & 1 & 3 & 2 & 2 & 2 & 4 & 2 & 1 & 3 \\   
 \hline \hline 
 n & -47 & -48 & -51 & -52 & -55 & -56 & -59 & -60 & -63 & -64 & -67 \\
 \hline
 h_w(n) & 5 & 2 & 2 & 2 & 4 & 4 & 3 & 2 & 4 & 2 & 1 \\ 
 \hline
\end{array}
$$

\end{mytable}

\section{Estimates on terms in the trace formula} \label{sec:estimate}
In this section we give estimates on the $A_{i,m}$ trace terms \eqref{eqn:A1m-formula}, \eqref{eqn:A2m-formula}, \eqref{eqn:A3m-formula}, and \eqref{eqn:A4m-formula}.
First, we introduce some arithmetic functions  that will be used to express these estimates. 
\begin{lemma} \label{lem:omega-over-psi-bounds}
    Recall that $\psi(N) = N \prod_{p|N} \lrp{ 1 + \frac{1}{p} }$, and let $\omega(N)$ denote the number of distinct prime divisors of $N$. Define
    \begin{align*}
        \theta_1(N) := \frac{2^{\omega(N)} \sqrt{N} }{\psi(N)}, 
        \qquad 
        \theta_2(N) := \frac{(2^{\omega(N)})^2}{\psi(N)}, 
        \qquad
        \theta_3(N) := \frac{2^{\omega(N)}}{\psi(N)}.
    \end{align*}
    Then each $\theta_i(N) \rightarrow 0$ as $N \rightarrow \infty$. \\
    In particular, we have the bounds given in the following table.
    $$     
\begin{array}{|c|c|c|c|c|c|c|c|}
\hline
 \displaystyle N \geq & 1 & 43 & 571 & 8,\!800 & 150,\!000 & 2,\!700,\!000 & 63,\!000,\!000  \\
\hline
\displaystyle \theta_1(N) \leq & 1.00 & 0.465 & 0.257 & 0.133 & 0.0607 & 0.0265 & 0.0106  \\
\hline 
\displaystyle \theta_2(N) \leq & 1.34 & 0.445 & 0.149 & 0.0424 & 0.00941 & 0.00189 & 0.000314  \\
\hline 
\displaystyle \theta_3(N) \leq & 1.00 & 0.0556 & 0.00926 & 0.00133 & 0.000147 & 0.000015 &  0.000015 \\
\hline
\end{array}
$$

\end{lemma}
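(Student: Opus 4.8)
The plan is to treat the two assertions of the lemma separately: the qualitative statement that each $\theta_i(N)\to 0$, and the explicit numerical table.

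For the limits, I would first record that each $\theta_i$ is a ratio of multiplicative functions, hence multiplicative, and that $\psi(N)\ge N$; therefore
\[
\theta_1(N)\le \frac{2^{\omega(N)}}{\sqrt N},\qquad \theta_2(N)\le \frac{4^{\omega(N)}}{N},\qquad \theta_3(N)\le \frac{2^{\omega(N)}}{N}.
\]
Thus everything reduces to bounding $2^{\omega(N)}$ by a small power of $N$. The key elementary estimate is that for every integer $s\ge 1$,
\[
2^{\omega(N)}\le c_s\,N^{1/s},\qquad\text{where}\qquad c_s:=\prod_{p<2^{s}}\frac{2}{p^{1/s}}.
\]
Indeed $2^{\omega(N)}/N^{1/s}$ is multiplicative with local factor $2/p^{a/s}$ at $p^a$, and this factor is $\le 1$ as soon as $p\ge 2^{s}$; hence the product is maximized by taking each prime $p<2^s$ to the first power and discarding the rest, which gives $c_s$. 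Choosing for instance $s=4$ yields an explicit constant $c_4$, and then $\theta_1(N)\le c_4 N^{-1/4}$, $\theta_2(N)\le c_4^{\,2}N^{-1/2}$, and $\theta_3(N)\le c_4 N^{-3/4}$, all of which tend to $0$. Letting $s\to\infty$ recovers $2^{\omega(N)}=O_\epsilon(N^\epsilon)$, so the decay exponents may be pushed arbitrarily close to $-\tfrac12,-1,-1$ respectively.

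For the first column of the table ($N\ge 1$) I would use multiplicativity directly. Writing $f_i(p^a)$ for the local factor of $\theta_i$ at $p^a$ and setting $f_i(p^0)=1$, we have $\sup_{N}\theta_i(N)=\prod_p\max_{a\ge 0}f_i(p^a)$. A short computation gives $f_1(p^a)=2p^{1-a/2}/(p+1)$, $f_3(p^a)=2/\lrp{p^{a-1}(p+1)}$, and $f_2(p^a)=4/\lrp{p^{a-1}(p+1)}$. For $\theta_1$ and $\theta_3$ every factor with $a\ge 1$ is $<1$ (the largest being $f_1(2)=2\sqrt2/3\approx 0.943$ and $f_3(2)=2/3$), so the supremum is $1$, attained at $N=1$; for $\theta_2$ the only factor exceeding $1$ is $f_2(2)=4/3$ (with $f_2(3)=1$), giving $\sup_N\theta_2(N)=4/3\le 1.34$, attained at $N=2$ and $N=6$. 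This yields the $N\ge 1$ entries.

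For the remaining columns, i.e.\ bounds of the form $\theta_i(N)\le c$ for all $N\ge N_0$ with $N_0>1$, the functions are not monotone, so the bound cannot simply be evaluated at $N_0$. Instead I would combine the explicit decay with a finite verification: given a column $(N_0,c)$, choose $s$ so that the power bound from the first step satisfies $c_s\,N^{1/s-\ast}\le c$ for all $N\ge N_1$, with $N_1$ explicit and computable; the claim then reduces to checking $\theta_i(N)\le c$ for the finitely many $N$ with $N_0\le N<N_1$. For the large thresholds one can choose $s$ so that $N_1\le N_0$ and no finite check is needed, while for the small thresholds the maximum over $[N_0,N_1)$ is attained at an explicit small $N$ just above the threshold — for example $\max_{N\ge 43}\theta_2(N)=\theta_2(60)=4/9\le 0.445$ and $\max_{N\ge 43}\theta_1(N)=\theta_1(70)=\sqrt{70}/18\le 0.465$ — which one reads off by direct evaluation. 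The main obstacle is precisely this non-monotonicity: the content of the table is the value of $\max_{N\ge N_0}\theta_i(N)$, and making it rigorous requires an effective tail bound sharp enough that the residual range $[N_0,N_1)$ is both finite and small enough to enumerate. Balancing the constant $c_s$ against the exponent $1/s$ so that $N_1$ stays manageable across all columns is the only delicate point; the verification over the resulting finite range is then routine and computer-assisted, in keeping with the effective nature of the paper.
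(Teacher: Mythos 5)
Your proposal is correct, and its overall architecture matches the paper's: an effective estimate showing $2^{\omega(N)}$ is dominated by a small power of $N$ (so each $\theta_i(N)\to 0$), followed by an explicit tail bound plus a finite, computer-assisted verification of the table. The differences lie in how the tail is controlled. The paper splits on $\omega(N)$: for $\omega(N)\ge 9$ it shows $\theta_i(N)\le\theta_i(P_9)$ by comparing $\psi(N)/\sqrt{N}$ with its value at the primorial $P_9=2\cdot 3\cdots 23$, and for $\omega(N)\le 8$ it uses the crude bounds $2^{\omega(N)}\le 2^8$ and $\psi(N)\ge N$; together these give all three target inequalities for $N\ge 584{,}000{,}000$, leaving a check over $N<5.84\times 10^8$. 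Your uniform bound $2^{\omega(N)}\le c_s N^{1/s}$ with $c_s=\prod_{p<2^s}2p^{-1/s}$ is correct (and with $s=3$ it sharpens the paper's $2^{\omega(N)}\le 16N^{1/3}$), but it is quantitatively weaker than the $\omega$-threshold argument: for the hardest column, $\theta_1(N)\le 0.0106$ for $N\ge 63{,}000{,}000$, the best choice of $s$ still forces a residual range $[N_0,N_1)$ of order $10^{10}$, roughly twenty times the paper's verification range. That is a cost in computation, not a gap in logic, but it is the price of discarding the information that a squarefree $N$ with many prime factors must itself be large. On the other hand, your analytic evaluation of the first column via the local factors $f_i(p^a)$ — giving $\sup_N\theta_1=\sup_N\theta_3=1$ and $\sup_N\theta_2=4/3$ exactly, and identifying the maximizers just above each threshold — is a genuine refinement the paper does not carry out; it simply folds those columns into the exhaustive check.
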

\begin{proof}
    Note that every prime other than $2,3,5,7$ is $\geq 8$. Thus $\omega(N) \leq 4 + \log_8(N)$ and so $2^{\omega(N)} \leq 2^{4 + \log_8(N)} \leq 16 \cdot N^{1/3}$. Since $\psi(N) \geq N$, it is clear that
    $$\theta_1(N) = \frac{2^{\omega(N)} \sqrt{N} }{\psi(N)}, \ \ \ \theta_2(N) = \frac{(2^{\omega(N)})^2}{\psi(N)}, \ \ \ \theta_3(N) = \frac{2^{\omega(N)}}{\psi(N)} \ \ \longrightarrow 0 \ \text{ as } \ N \longrightarrow \infty.$$
    To prove the specific numerical bounds given above, we will first show that
    \begin{equation} \label{eqn:three-numerical-theta-bounds}
        \theta_1(N) \leq 0.0106,\ \theta_2(N) \leq 0.000314, \ \theta_3(N) \leq 0.000015
    \end{equation}
    for all $N \geq 584,\!000,\!000$. Then we will verify each of the claimed bounds in the table by exhaustive computer check over all $N < 584,\!000,\!000$.
    
    Let $p_n$ denote the $n$-th prime number, and let $P_n := p_1\cdots p_n$.
    For all $N$ with $\omega(N) \geq 9$, we show that $\theta_i(N) \leq \theta_i(P_9)$.
    For such $N$, let $N=q_1^{e_1} \cdots q_m^{e_m}$ be its prime factorization. Then 
    \begin{align*}
        \frac{\psi(N)}{\sqrt{N}} &= \frac{ (q_1+1)q_1^{e_1-1} \cdots (q_m+1)q_m^{e_m-1} }{q_1^{e_1/2} \cdots q_m^{e_m/2}} \\
        &\geq \frac{ (q_1+1) \cdots (q_m+1) }{q_1^{1/2} \cdots q_m^{1/2}} \\
        &\geq \frac{ (p_1+1) \cdots (p_m+1) }{p_1^{1/2} \cdots p_m^{1/2}} \qquad \qquad \qquad \lrp{ \text{since $\frac{x+1}{x^{1/2}}$ is increasing for $x \geq 1$} } \\
        &= \frac{\psi(P_9)}{\sqrt{P_9}}  \frac{ (p_{10}+1) \cdots (p_m+1) }{p_{10}^{1/2} \cdots p_m^{1/2}}  \\
        &\geq \frac{\psi(P_9)}{\sqrt{P_9}}  2^{m-9} .
    \end{align*}
    This means that
    $\displaystyle
        \theta_1(N) = \frac{2^m \sqrt{N}}{\psi(N)}  
        \leq \frac{2^m \sqrt{P_9}}{2^{m-9} \psi(P_9)}  = \theta_1(P_9) \leq 0.0106
    $.
    
    By an identical argument,
    $\displaystyle
        \psi(N) \geq 4^{m-9} \psi(P_9)
    $,
    which means that
    $\displaystyle
        \theta_2(N) \leq \theta_2(P_9) \leq 0.000314 
    $
    and
    $\displaystyle
         \theta_3(N) \leq \theta_3(P_9) \leq 0.000015 
    $.
    This verifies the three bounds in \eqref{eqn:three-numerical-theta-bounds} for all $N$ with $\omega(N) \geq 9$. 
    
    For $N$ with $\omega(N) \leq 8$ and $N \geq 584,\!000,\!000$, we have
    \begin{alignat}{3}
        \theta_1(N) &= \frac{2^{\omega(N)} \sqrt{N}}{\psi(N)} &&\leq \frac{2^8 \sqrt{N}}{N} &&\leq 0.00915 , \\
        \theta_2(N) &= \frac{(2^{\omega(N)})^2}{\psi(N)} &&\leq \frac{2^{16}}{N} &&\leq 0.000314 , \\
        \theta_3(N) &= \frac{2^{\omega(N)}}{\psi(N)} &&\leq \frac{2^8}{N} &&\leq 0.000015 .
    \end{alignat}
    This verifies the three bounds in \eqref{eqn:three-numerical-theta-bounds} for all $N$ with $N \geq 584,\!000,\!000$. 

    Then via exhaustive computer check over all $N < 584,\!000,\!000$, we obtain the claimed bounds from the table. See \cite{ross-code} for the code. 
\end{proof}

Next, we bound the inner summation for $A_{2,m}$ in \eqref{eqn:A2m-formula}. 
\begin{lemma} \label{lem:U-mu-bound}  
    For $t,n,m$ given in \eqref{eqn:A2m-formula}, 
    $$
    \lrabs{U_{k-1}(t,m) \cdot \mu(t,n,m) } \leq 2 \psi(n) 2^{\omega(N)} m^{(k-1)/2}.
    $$
\end{lemma}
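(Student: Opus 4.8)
The plan is to factor the left-hand side as $\lrabs{U_{k-1}(t,m)}\cdot\lrabs{\mu(t,n,m)}$ and bound each factor, with the key point that a sharp bound on $U_{k-1}$ produces a factor $1/\sqrt{4m-t^2}$ that cancels exactly against a matching factor coming from the number of solutions counted inside $\mu$. (The cruder separate bounds $\lrabs{U_{k-1}}\le 2m^{(k-1)/2}$ and the naive count $2^{\omega(N)}$ do not suffice, since a quadratic congruence can have many more than two roots modulo a ramified prime power; so this cancellation is essential.)

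For $U_{k-1}$: since $t^2<4m$, the roots $\rho,\bar\rho$ of $X^2-tX+m$ are complex conjugate of modulus $\sqrt m$, so writing $\rho=\sqrt m\,e^{i\theta}$ gives $U_{k-1}(t,m)=m^{(k-2)/2}\frac{\sin((k-1)\theta)}{\sin\theta}$. As $\sin^2\theta=(4m-t^2)/(4m)$ and $\lrabs{\sin((k-1)\theta)}\le 1$, I would bound
\[
\lrabs{U_{k-1}(t,m)}\le\frac{m^{(k-2)/2}}{\lrabs{\sin\theta}}=\frac{2\,m^{(k-1)/2}}{\sqrt{4m-t^2}}.
\]

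For $\mu(t,n,m)=\frac{\psi(N)}{\psi(N/N_n)}\sideset{}{'}\sum_{c}\chi(c)$, I would first bound the prefactor by $\psi(n)$: prime by prime $\psi(p^{a})/\psi(p^{a-b})\le\psi(p^{b})$ with $b=v_p(N_n)$ (where $v_p$ is the $p$-adic valuation), so the prefactor is at most $\psi(N_n)\le\psi(n)$ since $N_n\mid n$. Since $\lrabs{\chi(c)}\le 1$, the character sum is bounded by the number of $c\in(\ZZ/N\ZZ)^\times$ lifting to solutions of $c^2-tc+m\equiv 0\pmod{NN_n}$. By the Chinese Remainder Theorem this count is $\prod_{p\mid N} I_p$, and the heart of the argument is the per-prime estimate
\[
I_p\le 2\,p^{\,v_p(4m-t^2)/2},
\]
which yields $\lrabs{\sideset{}{'}\sum_c\chi(c)}\le 2^{\omega(N)}\prod_{p\mid N}p^{v_p(4m-t^2)/2}\le 2^{\omega(N)}\sqrt{4m-t^2}$. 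Multiplying the three bounds, the two powers of $\sqrt{4m-t^2}$ cancel and leave exactly $2\psi(n)2^{\omega(N)}m^{(k-1)/2}$.

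I expect the per-prime count to be the main obstacle. Completing the square rewrites $c^2-tc+m\equiv 0$ as $(2c-t)^2\equiv D\pmod{p^{s}}$ with $D=t^2-4m$, reducing $I_p$ to counting square roots of $D$ in $\ZZ/p^{s}\ZZ$; this count is governed by $v_p(D)$ and must be handled carefully in the ramified case $p\mid D$ and, separately, at $p=2$ (where completing the square forces the modulus up to $4p^{s}$ and the number of square roots carries an extra factor). By contrast, the bounds on $U_{k-1}$ and on the $\psi$-prefactor are routine.
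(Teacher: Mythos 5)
Your proposal is correct and follows essentially the same route as the paper: the same three-factor decomposition of $\lrabs{U_{k-1}(t,m)\cdot\mu(t,n,m)}$, the same bound $2m^{(k-1)/2}/\sqrt{4m-t^2}$ on the Lucas factor, the same chain $\psi(N)/\psi(N/N_n)\le\psi(N_n)\le\psi(n)$ for the prefactor, and the same cancellation of the two $\sqrt{4m-t^2}$ factors. The only difference is that the paper dispenses with what you flag as the main obstacle --- the count of solutions of $x^2-tx+m\equiv 0\pmod{N}$ --- by citing Serre's Lemma~2 for the bound $2^{\omega(N)}\sqrt{4m-t^2}$, rather than proving the per-prime estimate (including the ramified and $p=2$ cases) from scratch as you propose.
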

\begin{proof}
Recall that $m \geq 1$, $t \in \ZZ$ such that $4m-t^2 > 0$, and $n \geq 1$ such that $n^2 \, | \, (t^2 -4m)$ and $\frac{t^2-4m}{n^2} \equiv 0,1 \, \pmod{4}$. And recall that $U_{k-1}(t,m) = \frac{\rho^{k-1} - \bar{\rho}^{k-1}}{\rho - \bar{\rho}}$ where $\rho, \bar{\rho}$ are the two roots of the polynomial $X^2-tX+m$. Finally, recall that $\displaystyle \mu (t,n,m) = \frac{\psi (N)}{\psi (N/N_n)} \sideset{}{'} \sum_{c \!\! \mod N} \chi(c)$, where $N_n = \gcd(N,n)$, and the primed summation runs through all elements $c$ of $(\ZZ / N \ZZ)^{\times}$ which lift to solutions of $c^2-tc+m \equiv 0 \, \pmod{NN_n}$.

So,
\begin{equation} \label{eqn:U-mu-factors}
\lrabs{U_{k-1}(t,m) \cdot \mu(t,n,m) } = \lrabs{ \frac{\rho^{k-1} - \bar{\rho}^{k-1}}{\rho - \bar{\rho}} } \cdot \frac{\psi(N)}{\psi(N/N_n)} \cdot \lrabs{  \sideset{}{'} \sum_{c \!\!\! \mod N} \chi(c) } .
\end{equation}
We give bounds on each of these three factors.

First, since 
$$\lrabs{\rho} = \sqrt{m} \qquad \text{and} \qquad \lrabs{\rho - \bar{\rho}} = \sqrt{4m-t^2} ,$$ 
we have
\begin{equation} \label{eqn:U-mu-bound1}
\lrabs{ \frac{\rho^{k-1} - \bar{\rho}^{k-1}}{\rho - \bar{\rho}} } \leq \frac{ \lrabs{\rho^{k-1}} + \lrabs{\bar{\rho}^{k-1} } }{ \lrabs{ \rho - \bar{\rho} } } = \frac{2 m^{(k-1)/2}}{\sqrt{4m-t^2}} .
\end{equation}

Second, note that for every prime $p \mid N$, we will either have $p \mid N_n$ or $p \mid N/N_n$. Thus
\begin{align}
    \psi(N) &= N \prod_{p|N} \lrp{1 + \frac{1}{p} } \\
    &\leq N_n \prod_{p|N_n} \lrp{1 + \frac{1}{p} } \cdot N/N_n \prod_{p|N/N_n} \lrp{1 + \frac{1}{p}} \\
    &= \psi(N_n) \psi(N/N_n) ,
\end{align}
which yields
\begin{equation} \label{eqn:U-mu-bound2}
    \frac{\psi(N)}{\psi(N/N_n)} \leq \psi(N_n) \leq \psi(n)  ,
\end{equation}
where the second inequality comes from the fact that $N_n \mid n$.

Third, note for every term $c$ in the sum of \eqref{eqn:U-mu-factors}, $c^2-tc+m \equiv 0 \pmod{NN_n}$ means that $c^2-tc+m \equiv 0 \pmod{N}$ as well. By {\cite[Lemma~2]{serre}}, the congruence $x^2-tx+m \equiv 0 \pmod{N}$ has at most $2^{\omega(N)} \sqrt{4m-t^2}$ solutions. Thus 
\begin{equation} \label{eqn:U-mu-bound3}
\lrabs{  \sideset{}{'} \sum_{c \!\!\! \mod N} \chi(c) } \leq 2^{\omega(N)} \sqrt{4m-t^2} .
\end{equation}
Combining the bounds \eqref{eqn:U-mu-bound1}, \eqref{eqn:U-mu-bound2}, and \eqref{eqn:U-mu-bound3}, we obtain
$$
    \lrabs{U_{k-1}(t,m) \cdot \mu(t,n,m) } \leq 2 \psi(n) 2^{\omega(N)} m^{(k-1)/2} ,
$$
which completes the proof.
\end{proof}

Next, we bound the inner summation for $A_{3,m}$ in \eqref{eqn:A3m-formula}. 
\begin{lemma} \label{lem:sigma-N-m-d-bound}
    Let 
$$ \Sigma(N,m,d) := \sum_\tau \phi(\gcd(\tau, N/\tau)) \chi(y_{\tau})
$$
denote the inner summation for $A_{3,m}$ in \eqref{eqn:A3m-formula}. 
Then 
$$\lrabs{\Sigma(N,m,d)} \leq
    \begin{cases} 
            \lrabs{d-\frac md} \cdot 2^{\omega(N)} & \text{if } d \neq \sqrt{m}, \\
            \sqrt{N} \cdot 2^{\omega(N)} & \text{in general.}
    \end{cases}
$$
\end{lemma}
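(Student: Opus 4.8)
The plan is to reduce the estimate to an elementary counting problem. Since $\chi$ takes values of modulus at most $1$, we have $\lrabs{\chi(y_\tau)} \leq 1$, so the triangle inequality immediately gives
\[
\lrabs{\Sigma(N,m,d)} \leq \sum_\tau \phi\lrp{\gcd(\tau, N/\tau)},
\]
where the sum runs over the same set of $\tau$ as in \eqref{eqn:A3m-formula}. Writing $G(\tau) := \gcd(\tau, N/\tau)$, I would organize this nonnegative sum by grouping together all divisors $\tau \mid N$ that produce a common value $v = G(\tau)$. The combinatorial heart of the argument is the count $n(v) \leq 2^{\omega(N)}$, where $n(v) := \#\{\tau \mid N : G(\tau) = v\}$. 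To prove this, factor $N = \prod_p p^{e_p}$ and $\tau = \prod_p p^{a_p}$ with $0 \leq a_p \leq e_p$, so that $G(\tau) = \prod_p p^{\min(a_p, e_p - a_p)}$; hence $v = G(\tau)$ is of the shape $v = \prod_p p^{j_p}$ with $0 \leq j_p \leq \lfloor e_p/2 \rfloor$, and for each prime the equation $\min(a_p, e_p - a_p) = j_p$ has at most the two solutions $a_p \in \{j_p,\, e_p - j_p\}$. Taking the product over the $\omega(N)$ primes dividing $N$ gives $n(v) \leq 2^{\omega(N)}$.

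For the first bound I would exploit the summation constraint. When $d \neq \sqrt m$ the integer $d - \frac md$ is nonzero, and the inner summation only includes $\tau$ with $G(\tau) \mid g$, where $g := \gcd(N/N_\chi, d - \frac md)$; in particular $g \mid (d - \frac md)$. Grouping by value and using the totient identity $\sum_{v \mid g}\phi(v) = g$ together with $n(v) \leq 2^{\omega(N)}$ yields
\[
\sum_\tau \phi(G(\tau)) = \sum_{v \mid g} \phi(v)\, n(v) \leq 2^{\omega(N)} \sum_{v \mid g} \phi(v) = 2^{\omega(N)}\, g \leq 2^{\omega(N)} \lrabs{d - \tfrac md},
\]
which is the first case.

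For the general bound I would simply discard the divisibility constraint and estimate over all $\tau \mid N$. Grouping by value as before,
\[
\sum_{\tau \mid N} \phi(G(\tau)) = \sum_{v} \phi(v)\, n(v) \leq 2^{\omega(N)} \sum_{v \,:\, n(v) > 0} \phi(v),
\]
and the values $v$ with $n(v) > 0$ are exactly those of the form $v = \prod_p p^{j_p}$ with $0 \leq j_p \leq \lfloor e_p/2 \rfloor$. The telescoping identity $\sum_{j=0}^{J}\phi(p^j) = p^J$ and the multiplicativity of $\phi$ then give
\[
\sum_{v \,:\, n(v) > 0}\phi(v) = \prod_{p \mid N} p^{\lfloor e_p/2 \rfloor} \leq \sqrt N ,
\]
since $\bigl(\prod_{p \mid N} p^{\lfloor e_p/2 \rfloor}\bigr)^2 \mid N$. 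Hence $\lrabs{\Sigma(N,m,d)} \leq 2^{\omega(N)} \sqrt N$, covering the case $d = \sqrt m$ (where $d - \frac md = 0$) as well.

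The main obstacle is the bookkeeping in the two displayed counts: establishing $n(v) \leq 2^{\omega(N)}$ and correctly evaluating $\sum_{n(v)>0}\phi(v)$ through the telescoping identity. Both are routine once the prime-by-prime structure of $\gcd(\tau, N/\tau)$ is made explicit, and with them in hand the two bounds follow at once — the case $d \neq \sqrt m$ from the divisibility constraint $G(\tau)\mid(d-\frac md)$, and the general case from the square-root estimate $\prod_{p\mid N} p^{\lfloor e_p/2\rfloor} \leq \sqrt N$.
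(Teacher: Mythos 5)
Your proposal is correct and follows essentially the same route as the paper: bound $|\chi(y_\tau)|$ by $1$, group the divisors $\tau$ by the common value of $\gcd(\tau,N/\tau)$, prove the count of $\tau$ with a given gcd value is at most $2^{\omega(N)}$ by the same prime-by-prime two-choices argument, and finish with $\sum_{\delta\mid h}\phi(\delta)=h$ in the first case and the observation that the achievable gcd values divide $\prod_{p\mid N}p^{\lfloor e_p/2\rfloor}\leq\sqrt{N}$ in the general case. The only cosmetic difference is that the paper phrases the last step as $\delta^2\mid N$ forcing $\delta\mid M$ where $N=DM^2$ with $D$ squarefree, which is the same bound you obtain by the telescoping product.
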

\begin{proof}
Recall that the summation $\sum_{\tau}$  runs over all positive divisors $\tau$ of $N$ such that $\gcd(\tau,N/\tau)$ divides $\gcd(N/N_{\chi}, d-m/d)$. Additionally, $y_{\tau}$ is the unique integer modulo $\lcm(\tau, N/\tau)$ determined by the congruences $y_{\tau} \equiv d \pmod{\tau}$ and $y_{\tau} \equiv m/d \pmod{N/\tau}$.

    Let $h := |d - \frac{m}{d}|$. Then
    \begin{align}
        \lrabs{\Sigma(N,m,d)} &= \lrabs{\sum_{\substack{\tau \mid N \\  (\tau, N/\tau) \mid (h,N/N_{\chi})}} \phi(\gcd(\tau, N/\tau)) \chi(y_{\tau})} \\
        &\leq \sum_{\substack{\tau \mid N \\  (\tau, N/\tau) \mid (h,N/N_{\chi})}} \phi(\gcd(\tau, N/\tau)) \\
        &\leq \sum_{\substack{\tau \mid N \\  (\tau, N/\tau) \mid h}} \phi(\gcd(\tau, N/\tau)) \\
        &= \sum_{\delta\mid h} \ \sum_{\substack{\tau \mid N \\ (\tau, N/\tau)=\delta}} \phi(\delta) \\
        &= \sum_{\delta \mid h} \phi(\delta) \cdot \#\{\tau \mid N \ \colon \ \gcd(\tau, N/\tau)=\delta\} .
    \end{align}
    
    In the case of $d \neq \sqrt{m}$, we have $h\neq 0$, and so using Lemma \ref{lem:tau-set-cardinality-bound} below, we have
    \begin{align}
        \lrabs{\Sigma(N,m,d)} 
        &\leq \sum_{\delta \mid h} \phi(\delta) \cdot \#\{\tau|N \ \colon \ \gcd(\tau, N/\tau)=\delta\} \\
        &\leq \sum_{\delta \mid h} \phi(\delta) \cdot 2^{\omega(N)} \\
        &= h \cdot 2^{\omega(N)}.
    \end{align}
    Here, we used the well-known formula
    $\sum_{\delta \mid h} \phi(\delta) = h$.
    
    In the general case, write $N = DM^2$ where $D$ is squarefree. Then for any $\delta$ such that $\gcd(\tau, N/\tau) = \delta$ for some $\tau$, note $\delta \mid \tau$ and $\delta \mid N/\tau$ imply $\delta^2 \mid N$, which means that $\delta \mid M$. 
    Hence by Lemma \ref{lem:tau-set-cardinality-bound} again, 
    \begin{align}
        \lrabs{\Sigma(N,m,d)} 
        &\leq \sum_{\delta \mid h} \phi(\delta) \cdot \#\{\tau \mid N \ \colon \ (\tau, N/\tau)=\delta\} \\
        &\leq \sum_{\delta \mid M} \phi(\delta) \cdot \#\{\tau \mid N \ \colon \ (\tau, N/\tau)=\delta\} \\
        &\leq \sum_{\delta \mid M} \phi(\delta) \cdot 2^{\omega(N)} \\
        &= M \cdot 2^{\omega(N)} \\
        &\leq \sqrt{N} \cdot 2^{\omega(N)},
    \end{align}
    which completes the proof.
\end{proof}

\begin{lemma} \label{lem:tau-set-cardinality-bound}
    Let $N$ and $\delta$ be positive integers. Then
    $$\#\{\tau \mid N \ \colon \ \gcd(\tau, N/\tau)=\delta\} \leq 2^{\omega(N)} .$$
\end{lemma}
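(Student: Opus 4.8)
The plan is to reduce the counting to a prime-by-prime analysis, exploiting the multiplicativity of the $\gcd$. First I would write the prime factorization $N = \prod_{p} p^{e_p}$ and parametrize each divisor $\tau \mid N$ by its exponents, $\tau = \prod_p p^{a_p}$ with $0 \leq a_p \leq e_p$. Then $N/\tau = \prod_p p^{e_p - a_p}$, and hence
$$\gcd(\tau, N/\tau) = \prod_p p^{\min(a_p,\, e_p - a_p)}.$$
Writing $\delta = \prod_p p^{d_p}$, the condition $\gcd(\tau, N/\tau) = \delta$ is therefore equivalent to the system of independent local conditions $\min(a_p, e_p - a_p) = d_p$, one for each prime $p \mid N$.

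Next I would count, for a fixed prime $p$, the number of admissible exponents $a_p \in \{0, 1, \ldots, e_p\}$ satisfying $\min(a_p, e_p - a_p) = d_p$. The key observation is that any solution must satisfy either $a_p = d_p$ or $e_p - a_p = d_p$ (that is, $a_p = e_p - d_p$), so there are at most two choices of $a_p$ for each $p$. More precisely, there are no solutions if $d_p > \lfloor e_p/2\rfloor$, exactly one solution $a_p = e_p/2$ when $e_p = 2d_p$, and exactly two solutions $a_p \in \{d_p,\, e_p - d_p\}$ when $0 \leq d_p < e_p/2$; in every case the local count is at most $2$.

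Finally, since the local conditions are independent across the distinct primes dividing $N$, the number of valid $\tau$ is the product of the local counts, which is at most $\prod_{p \mid N} 2 = 2^{\omega(N)}$, as claimed.

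As for the main obstacle, there is really none of substance: the statement is an elementary counting exercise once the problem is localized at each prime, and the crux is simply the uniform bound of $2$ on the number of admissible exponents per prime. The only point requiring mild care is the bookkeeping of when the two candidate exponents $d_p$ and $e_p - d_p$ coincide or fail to be admissible, but since these degenerate cases only decrease the local count below $2$, they do not affect the stated upper bound.
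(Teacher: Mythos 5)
Your proof is correct and follows essentially the same route as the paper: both localize the condition $\gcd(\tau,N/\tau)=\delta$ at each prime $p \mid N$ and observe that the exponent of $p$ in $\tau$ is forced to be one of the two values $d_p$ or $e_p - d_p$, giving at most $2^{\omega(N)}$ divisors in total. Your additional bookkeeping of the degenerate cases is fine but, as you note, only tightens the local count below $2$.
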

\begin{proof}
    Without loss of generality, we can assume that $\delta \mid N$ (otherwise, the inequality holds trivially). 
    Consider the possible $\tau$ that would yield $\gcd(\tau, N/\tau)=\delta$.
    For each prime $p \mid N$,
    let $v_p(\cdot)$ denote $p$-adic valuation, and let $r_p := v_p(\delta)$.  Note that $\gcd(\tau, N/\tau)=\delta$ precisely for $\tau$ such that $v_p(\gcd(\tau,N/\tau)) = r_p$ for each $p \mid N$. On the other hand, $v_p( \gcd(\tau,N/\tau)) = r_p$ means that either $v_{p}(\tau) = r_p$ and $v_p(N/\tau) \geq r_p$, or $v_p(N/\tau) = r_p$ and $v_{p}(\tau) \geq r_p$. This yields only two possible values for $v_{p}(\tau)$: $r_p$ and $v_{p}(N) - r_p$. Thus, since there are at most two possible options for $v_{p}(\tau)$ for each prime $p \mid N$, we have that $\#\{\tau \mid N \ \colon \ \gcd(\tau, N/\tau)=\delta\} \leq  2^{\omega(N)}$. 
\end{proof}

\section{Proof of Theorems \ref{thm:main} and \ref{thm:main-trivial-char}} \label{sec:proofofmain}
In this section, we will show Theorems \ref{thm:main} and \ref{thm:main-trivial-char}. We split the proof into the case when $m$ is not a perfect square (Proposition \ref{prop:main-theorem-not-perfect-square}), and the case when $m$ is a perfect square (Proposition \ref{prop:main-theorem-perfect-square}).

Recall what big $O$ notation means in terms of two variables $N$ and $k$. A function $f(N,k)$ is $O(g(N,k))$ if there exists a constant $C$ such that $\lrabs{f(N,k)} \leq C \cdot g(N,k)$ for $N+k$ sufficiently large.
In other words, for any fixed value of $N$, this can be interpreted as big $O$ notation with respect to $k$, and for any fixed value of $k$, this can be interpreted as big $O$ notation with respect to $N$.

Now, we give a bound on the trace $\Tr T_m(N,k,\chi)$ when $m$ is not a perfect square. 
\begin{lemma} \label{lem:main-lemma-not-perfect-square}
    Let $m\ge1$ be fixed such that $m$ is not a perfect square. For all $N \geq 1$, $k \geq 2$, and $\chi$ a Dirichlet character modulo $N$ with $\chi(-1) = (-1)^k$, we have
    $$\Tr T_m(N,k,\chi) =  O(2^{\omega(N)} m^{k/2}) .$$
\end{lemma}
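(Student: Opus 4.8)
The plan is to feed everything through the Eichler--Selberg trace formula \eqref{eqn:eichler-selberg-trace-formula}, writing $\Tr T_m(N,k,\chi) = A_{1,m} - A_{2,m} - A_{3,m} + A_{4,m}$, and to bound each of the four pieces separately by $O(2^{\omega(N)} m^{k/2})$. Throughout, $m$ is held fixed, so the implied constants may depend on $m$; in particular every quantity determined by $m$ alone --- the number of divisors of $m$, the number of admissible pairs $(t,n)$ occurring in \eqref{eqn:A2m-formula}, the weighted class numbers $h_w((t^2-4m)/n^2)$, the values $\psi(n)$, and the differences $|d-m/d|$ --- is $O(1)$ and will simply be absorbed. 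The first piece is immediate: since $m$ is not a perfect square, the convention $\chi(\sqrt m)=0$ forces $A_{1,m}=0$ in \eqref{eqn:A1m-formula}.

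For $A_{2,m}$ I would apply Lemma \ref{lem:U-mu-bound} termwise, bounding each summand by $|U_{k-1}(t,m)\,\mu(t,n,m)| \le 2\psi(n)2^{\omega(N)}m^{(k-1)/2}$. The outer sum runs over the at most $4\sqrt m$ integers $t$ with $t^2<4m$, and for each such $t$ the inner sum runs over the finitely many $n$ with $n^2 \mid (t^2-4m)$; because $|t^2-4m|\le 4m$, both the class numbers and the $\psi(n)$ that appear are bounded in terms of $m$ alone. Summing the $O(1)$-many terms then gives $A_{2,m}=O(2^{\omega(N)}m^{(k-1)/2})=O(2^{\omega(N)}m^{k/2})$, with no stray dependence on $N$ beyond the factor $2^{\omega(N)}$.

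The term $A_{3,m}$ is where the non-square hypothesis genuinely enters, and I expect it to be the one subtle step rather than the routine ones above. Since $m$ is not a perfect square, every divisor $d \mid m$ satisfies $d \ne \sqrt m$, so the first branch of Lemma \ref{lem:sigma-N-m-d-bound} applies and yields $|\Sigma(N,m,d)| \le |d - m/d|\cdot 2^{\omega(N)}$. This is precisely what avoids the weaker general bound $\sqrt N\cdot 2^{\omega(N)}$, whose factor of $\sqrt N$ would destroy the claimed estimate. Combining this with $\min(d,m/d)^{k-1} \le m^{(k-1)/2}$ (valid because $\min(d,m/d)^2 \le d\cdot(m/d)=m$ and $k\ge 2$), each summand of \eqref{eqn:A3m-formula} is at most $m^{(k-1)/2}|d-m/d|\,2^{\omega(N)}$; summing over the divisors of $m$ gives $A_{3,m}=O(2^{\omega(N)}m^{(k-1)/2})$.

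Finally, $A_{4,m}$ is nonzero only when $k=2$ and $\chi=\chi_0$, and in that case \eqref{eqn:A4m-formula} is bounded by $\sum_{c\mid m}c$, a constant depending only on $m$, hence $A_{4,m}=O(1)=O(2^{\omega(N)}m^{k/2})$ since both $2^{\omega(N)}\ge 1$ and $m^{k/2}\ge 1$. Adding the four bounds via the triangle inequality then gives $|\Tr T_m(N,k,\chi)| = O(2^{\omega(N)}m^{k/2})$, as desired. The only conceptual point is the use of $d\ne\sqrt m$ in the $A_{3,m}$ estimate, which is exactly the dividing line that will force the perfect-square case to be handled by a separate argument; everything else is bookkeeping once the estimates of Section \ref{sec:estimate} are available.
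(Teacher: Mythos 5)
Your proposal is correct and follows essentially the same route as the paper: vanishing of $A_{1,m}$ via $\chi(\sqrt m)=0$, termwise application of Lemma \ref{lem:U-mu-bound} to $A_{2,m}$, the first branch of Lemma \ref{lem:sigma-N-m-d-bound} (available precisely because no divisor of a non-square $m$ equals $\sqrt m$) together with $\min(d,m/d)\le\sqrt m$ for $A_{3,m}$, and the trivial bound on $A_{4,m}$. You also correctly single out the $d\ne\sqrt m$ step as the one place the non-square hypothesis is used, which matches the paper's treatment.
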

\begin{proof}
    We examine each of the $A_{i,m}$ terms in \eqref{eqn:eichler-selberg-trace-formula} separately.
    
    First, since $m$ is not a perfect square and $\chi(\sqrt{m})=0$, we have that $A_{1,m} = 0$ by \eqref{eqn:A1m-formula}.
    
    Second, observe that all the $t$ and $n$ from \eqref{eqn:A2m-formula} are bounded by the fixed value of $2\sqrt{m}$. Thus by Lemma \ref{lem:U-mu-bound},  
    $$\lrabs{U_{k-1}(t,m) \cdot \mu(t,n,m) } = O(2^{\omega(N)} m^{k/2}) ,$$ 
    so by \eqref{eqn:A2m-formula}
    \begin{align*}
        A_{2,m} &= \frac{1}{2} \sum_{t^2 < 4m} \sum_{n}  h_w \lrp{ \frac{t^2-4m}{n^2} }  U_{k-1}(t,m) \mu(t,n,m) \\
        &= O(2^{\omega(N)} m^{k/2}) .
    \end{align*}

    Third, since $m$ is not a perfect square, only the first case of Lemma \ref{lem:sigma-N-m-d-bound} applies. Thus each inner summation $\Sigma(N,m,d)$ for $A_{3,m}$ is $O(2^{\omega(N)})$. And also note that $\min(d,m/d) \leq m^{1/2}$. So by \eqref{eqn:A3m-formula},
    \begin{align*}
        A_{3,m} &= \frac{1}{2} \sum_{d|m} \min(d,m/d)^{k-1} \Sigma(N,m,d) \\
        &= O(2^{\omega(N)} m^{k/2}).
    \end{align*}
    Fourth, $A_{4,m} \leq \sum_{c\mid m} c = O(1)$ by \eqref{eqn:A4m-formula}. 

    Combining the above bounds, we obtain
    \begin{align*}
        \Tr T_m(N,k,\chi) &= A_{1,m} - A_{2,m} - A_{3,m} + A_{4,m} \\
        &=  O(2^{\omega(N)} m^{k/2}) ,
    \end{align*}
    which completes the proof.
\end{proof}

Next, we estimate the trace $\Tr T_m(N,k,\chi)$ when $m$ is a perfect square. 
\begin{lemma} \label{lem:main-lemma-perfect-square}
    Let $m$ be fixed such that $m$ is a perfect square. Then for all $N \geq 1$, $k \geq 2$, and $\chi$ a Dirichlet character modulo $N$ with $\chi(-1) = (-1)^k$, we have
    $$\Tr T_m(N,k,\chi) = \chi(\sqrt{m}) \frac{k-1}{12} \psi(N) m^{k/2-1} + O(\sqrt{N} 2^{\omega(N)} m^{k/2}).$$
\end{lemma}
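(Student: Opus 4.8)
The plan is to estimate each of the four trace terms $A_{1,m}, A_{2,m}, A_{3,m}, A_{4,m}$ in the Eichler--Selberg trace formula \eqref{eqn:eichler-selberg-trace-formula} separately, identify $A_{1,m}$ as the main term, and bundle the remaining three into the error $O(\sqrt{N} 2^{\omega(N)} m^{k/2})$. This mirrors the structure of the proof of Lemma \ref{lem:main-lemma-not-perfect-square}, with the key difference that now $m$ \emph{is} a perfect square, so $\chi(\sqrt{m})$ need not vanish and $A_{1,m}$ survives as the dominant contribution.

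First I would handle $A_{1,m}$: by \eqref{eqn:A1m-formula} it is exactly $\chi(\sqrt{m}) \frac{k-1}{12} \psi(N) m^{k/2-1}$, which is precisely the claimed main term, so nothing needs to be estimated there. Next, for $A_{2,m}$, since $m$ is fixed the outer and inner summations in \eqref{eqn:A2m-formula} range over the finitely many $t$ with $t^2 < 4m$ and the finitely many admissible $n$, all bounded in terms of the fixed constant $m$; applying Lemma \ref{lem:U-mu-bound} gives $\lrabs{U_{k-1}(t,m)\mu(t,n,m)} = O(2^{\omega(N)} m^{k/2})$ term-by-term, and the weighted class numbers $h_w$ are bounded by constants depending only on $m$, so $A_{2,m} = O(2^{\omega(N)} m^{k/2})$, which is absorbed into the stated error.

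The term $A_{3,m}$ is where the $\sqrt{N}$ factor in the error term enters, and this is the step that deserves the most care. When $m$ is a perfect square, the divisor $d = \sqrt{m}$ of $m$ gives $d = m/d$, so the first case of Lemma \ref{lem:sigma-N-m-d-bound} (which requires $d \neq \sqrt{m}$) does not apply to that divisor, and one must instead invoke the general bound $\lrabs{\Sigma(N,m,d)} \leq \sqrt{N} \cdot 2^{\omega(N)}$. For the divisors $d \neq \sqrt{m}$ the sharper bound $\lrabs{d - m/d} \cdot 2^{\omega(N)} = O(2^{\omega(N)})$ still holds. Weighting by $\min(d,m/d)^{k-1} \leq m^{(k-1)/2}$ and summing over the finitely many divisors $d \mid m$, the dominant contribution is $O(\sqrt{N} \, 2^{\omega(N)} m^{k/2})$, coming from the $d = \sqrt{m}$ term. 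Finally $A_{4,m} \leq \sum_{c \mid m} c = O(1)$ by \eqref{eqn:A4m-formula}. Combining all four estimates via \eqref{eqn:eichler-selberg-trace-formula} yields
$$\Tr T_m(N,k,\chi) = \chi(\sqrt{m}) \frac{k-1}{12} \psi(N) m^{k/2-1} + O(\sqrt{N} 2^{\omega(N)} m^{k/2}),$$
as claimed. The only genuine subtlety, and hence the main obstacle, is recognizing that the perfect-square case forces the weaker $\sqrt{N}$ bound in Lemma \ref{lem:sigma-N-m-d-bound} precisely for the self-conjugate divisor $d = \sqrt{m}$; everything else is routine bookkeeping parallel to the non-square case.
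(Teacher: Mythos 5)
Your proposal is correct and follows essentially the same route as the paper: the main term is read off from $A_{1,m}$, Lemma \ref{lem:U-mu-bound} gives $A_{2,m} = O(2^{\omega(N)} m^{k/2})$, the general bound of Lemma \ref{lem:sigma-N-m-d-bound} gives $A_{3,m} = O(\sqrt{N}\, 2^{\omega(N)} m^{k/2})$, and $A_{4,m} = O(1)$. Your extra observation that the $\sqrt{N}$ factor is forced precisely by the self-conjugate divisor $d = \sqrt{m}$ is a correct and slightly finer accounting than the paper's (which simply applies the general bound to every divisor), but it does not change the argument.
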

\begin{proof}
    We examine each of the $A_{i,m}$ terms in \eqref{eqn:eichler-selberg-trace-formula} separately.
    
    First, $A_{1,m} = \chi(\sqrt{m}) \frac{k-1}{12} \psi(N) m^{k/2-1}$ from \eqref{eqn:A1m-formula}. 
    
    Second, like in the previous Lemma, $A_{2,m} = O(2^{\omega(N)} m^{k/2})$.

    Third, from Lemma \ref{lem:sigma-N-m-d-bound}, each inner summation $\Sigma(N,m,d)$ in $A_{3,m}$ is $O(\sqrt{N} 2^{\omega(N)})$. So by \eqref{eqn:A3m-formula} and the fact that $\min(d,m/d) \leq m^{1/2}$,
    \begin{align*}
        A_{3,m} &= \frac{1}{2} \sum_{d|m} \min(d,m/d)^{k-1} \Sigma(N,m,d) \\
        &= O(\sqrt{N} 2^{\omega(N)} m^{k/2}).
    \end{align*}
    
    Fourth, we have $A_{4,m} = O(1)$ by \eqref{eqn:A4m-formula}.

    Combining the above bounds, we obtain
    \begin{align*}
        \Tr T_m(N,k,\chi) &= A_{1,m} - A_{2,m} - A_{3,m} + A_{4,m} \\
        &= \chi(\sqrt{m}) \frac{k-1}{12} \psi(N) m^{k/2-1} + O(\sqrt{N} 2^{\omega(N)} m^{k/2}),
    \end{align*}
    concluding the proof.
\end{proof}

Next, we prove Theorems \ref{thm:main} and \ref{thm:main-trivial-char} in the case when $m$ is not a perfect square.
\begin{proposition} \label{prop:main-theorem-not-perfect-square}
    Let $m \geq 1$ be fixed and not a perfect square. Suppose that $\gcd(N,m)=1$, $k \geq 2$, and $\chi$ is a Dirichlet character modulo $N$ with $\chi(-1) = (-1)^k$. Then  $a_2(m,N,k,\chi)$ is nonvanishing for all but finitely many triples $(N,k,\chi)$. 
    Furthermore, when $\chi$ is trivial, 
    $a_2(m,N,k)$ is negative for all but finitely many pairs $(N,k)$.
\end{proposition}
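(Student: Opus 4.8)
The plan is to feed the asymptotic estimates of Lemmas \ref{lem:main-lemma-not-perfect-square} and \ref{lem:main-lemma-perfect-square} into the formula for $a_2$ from Lemma \ref{lem:a2-coeff-formula}, isolate a single dominant term, and show it overwhelms all the rest. First I would observe that, since $m$ is not a perfect square, Lemma \ref{lem:main-lemma-not-perfect-square} gives $\Tr T_m = O(2^{\omega(N)} m^{k/2})$, so the first summand $(\Tr T_m)^2 = O(4^{\omega(N)} m^k)$ is pure error. The conceptual key is that for every $d \mid m$ the index $m^2/d^2 = (m/d)^2$ is \emph{always} a perfect square; hence Lemma \ref{lem:main-lemma-perfect-square} applies to each $\Tr T_{m^2/d^2}$ and produces a genuine main term $\chi(m/d)\frac{k-1}{12}\psi(N)(m/d)^{k-2}$ together with an error $O(\sqrt N\, 2^{\omega(N)}(m/d)^{k})$.

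Next I would collect the main terms. Multiplying by $\chi(d) d^{k-1}$ and using that $\chi$ is completely multiplicative on integers coprime to $N$ (valid since $\gcd(N,m)=1$), the factor $\chi(d)\chi(m/d)=\chi(m)$ pulls out, while $d^{k-1}(m/d)^{k-2} = m^{k-2} d$, so that
\begin{align*}
\sum_{d \mid m} \chi(d) d^{k-1} \cdot \chi(m/d)\tfrac{k-1}{12}\psi(N)(m/d)^{k-2} = \tfrac{k-1}{12}\psi(N)\chi(m)\,\sigma(m)\, m^{k-2},
\end{align*}
where $\sigma(m)=\sum_{d\mid m} d$. Combining the contributions yields
\begin{align*}
a_2(m,N,k,\chi) = -\tfrac{k-1}{24}\psi(N)\chi(m)\sigma(m)\,m^{k-2} + O(4^{\omega(N)} m^k) + O(\sqrt N\, 2^{\omega(N)} m^k),
\end{align*}
the two errors coming respectively from $(\Tr T_m)^2$ and from the errors of the $\Tr T_{m^2/d^2}$ (the sum over $d\mid m$ being finite for fixed $m$). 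Both big-$O$ constants are uniform in $k$ since the estimates of Lemmas \ref{lem:main-lemma-not-perfect-square} and \ref{lem:main-lemma-perfect-square} are.

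Finally I would rescale. Dividing by the positive quantity $\psi(N) m^{k-2}$ and absorbing the fixed factor $m^2$ into the constants, the errors become $O(\theta_2(N))$ and $O(\theta_1(N))$ in the notation of Lemma \ref{lem:omega-over-psi-bounds}, giving
\begin{align*}
\frac{a_2(m,N,k,\chi)}{\psi(N)m^{k-2}} = -\tfrac{k-1}{24}\chi(m)\sigma(m) + O(\theta_1(N)) + O(\theta_2(N)).
\end{align*}
Because $\gcd(N,m)=1$ forces $\lrabs{\chi(m)}=1$, the main term has absolute value $\frac{k-1}{24}\sigma(m)\geq \frac{\sigma(m)}{24}>0$, bounded away from zero for all $k\geq 2$, while $\theta_1(N),\theta_2(N)\to 0$ by Lemma \ref{lem:omega-over-psi-bounds}. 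Thus for all $N$ beyond some $N_0$ the error is dominated uniformly in $k$, and for each of the finitely many $N<N_0$ the $k$-linear main term dominates once $k$ is large enough; together these force $a_2\neq 0$ outside a finite set of pairs $(N,k)$, hence (each pair carrying only finitely many characters) outside a finite set of triples. When $\chi$ is trivial, $\chi(m)=1$, so the main term is strictly negative and the identical domination argument gives $a_2(m,N,k)<0$ for all but finitely many $(N,k)$.

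The main obstacle is the two-variable bookkeeping: I must keep the big-$O$ constants uniform in $k$ (so the linear-in-$k$ main term wins for fixed small $N$) while simultaneously exploiting $\theta_i(N)\to 0$ (so the main term wins for large $N$), and then splice these two regimes together to extract ``all but finitely many.'' The algebraic collapse of the divisor sum to $\chi(m)\sigma(m)m^{k-2}$ and the recognition that $(\Tr T_m)^2$ is negligible are what make the dominant term visible in the first place.
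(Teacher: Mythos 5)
Your proposal is correct and follows essentially the same route as the paper: apply Lemma \ref{lem:main-lemma-perfect-square} to each $\Tr T_{m^2/d^2}$ (noting these indices are always perfect squares), collapse the divisor sum to $\chi(m)\tfrac{k-1}{12}\psi(N)m^{k-2}\sigma_1(m)$, treat $(\Tr T_m)^2$ as pure error via Lemma \ref{lem:main-lemma-not-perfect-square}, and splice the large-$N$ regime (where $\theta_i(N)\to 0$) with the large-$k$ regime for the finitely many remaining $N$. The only cosmetic difference is that you keep a separate $O(\theta_2(N))$ error where the paper absorbs it into $O(\theta_1(N))$ using $2^{\omega(N)}=O(\sqrt N)$.
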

\begin{proof}
    By Lemma \ref{lem:a2-coeff-formula}, we have
    \begin{align*}
        a_2(m,N,k,\chi) 
        &= \frac{1}{2} \lrb{ (\Tr T_m)^2 - \sum_{d \mid m} \chi(d) d^{k-1} \Tr T_{m^2/d^2} }.  
    \end{align*}
    Now, observe that every term inside the sum has $m^2/d^2$ a perfect square. Thus for each term in the sum, we have by Lemma \ref{lem:main-lemma-perfect-square}, 
        \begin{align*}
        &~~\chi(d) d^{k-1} \Tr T_{m^2/d^2} \\
        &=\chi(d) d^{k-1} 
        \lrb{
        \chi \lrp{\sqrt{\frac{m^2}{d^2}}} \frac{k-1}{12} \psi(N) \lrp{\frac{m^2}{d^2}}^{k/2-1} + O\left( \sqrt{N} 2^{\omega(N)} \lrp{\frac{m^2}{d^2}}^{k/2} \right)  
        } \\
        &=\chi(d) d^{k-1} 
        \lrb{
        \chi \lrp{\frac{m}{d}} \frac{k-1}{12} \psi(N) \frac{m^{k-2}}{d^{k-2}} + O\left( \sqrt{N} 2^{\omega(N)} \frac{m^k}{d^k} \right)  
        } \\
        &=\chi(m) d \,\frac{k-1}{12} \psi(N) m^{k-2} + O(\sqrt{N} 2^{\omega(N)} m^k) .
    \end{align*}
    This yields
    \begin{align*}
        a_2(m,N,k,\chi) &= \frac{1}{2} \lrb{ (\Tr T_m)^2  - \sum_{d \mid m} \chi(d) d^{k-1} \Tr T_{m^2/d^2} } \\
        &= \frac{1}{2} \lrb{ (\Tr T_m)^2  - \sum_{d \mid m} \lrb{
        \chi(m) d \,\frac{k-1}{12} \psi(N) m^{k-2} + O(\sqrt{N} 2^{\omega(N)} m^k)
        } } \\
        &= \frac{1}{2} \lrb{ (\Tr T_m)^2  - 
        \chi(m) \frac{k-1}{12} \psi(N) m^{k-2} \sigma_1(m) + O(\sqrt{N} 2^{\omega(N)} m^k)
        } .
    \end{align*}
    Since $m$ is not a perfect square, we apply Lemma \ref{lem:main-lemma-not-perfect-square} to $\Tr T_m$ and obtain
    \begin{align}
        a_2(m,N,k,\chi)
        &= \frac{1}{2} \lrb{ O(2^{\omega(N)} m^{k/2})^2  - 
        \chi(m) \frac{k-1}{12} \psi(N) m^{k-2} \sigma_1(m) - O(\sqrt{N} 2^{\omega(N)} m^k)
        } \\
        &= \frac{1}{2} \lrb{ - 
        \chi(m) \frac{k-1}{12} \psi(N) m^{k-2} \sigma_1(m) + O(\sqrt{N} 2^{\omega(N)} m^k)
        } \\
        &= \frac{\psi(N) m^{k-2} \sigma_1(m)}{2} \lrb{ 
        - \chi(m) \frac{k-1}{12}  
         + O(\theta_1(N))
        } . \label{eqn:main-theorem-not-perfect-square-final-eqn}
    \end{align}
    Here we used the fact that $2^{\omega(N)} = O(\sqrt{N})$ (from the proof of Lemma \ref{lem:omega-over-psi-bounds}) and the definition $\theta_1(N) = \frac{\sqrt{N} 2^{\omega(N)}}{\psi(N)}$ (also from Lemma \ref{lem:omega-over-psi-bounds}).

    Now, $\gcd(m,N)=1$ means that $\lrabs{\chi(m)} = 1$, and hence  $\lrabs{\chi(m) \frac{k-1}{12}} \geq \frac{1}{12}$ for all $k \geq 2$. But for sufficiently large $N$ (independent of $k$), the $O(\theta_1(N))$ term will be $< \frac{1}{12}$ in magnitude since $\theta_1(N) \longrightarrow 0$ according to Lemma \ref{lem:omega-over-psi-bounds}. Thus $a_2(m,N,k,\chi)$  will be nonvanishing for sufficiently large $N$. 

    This leaves only finitely many $N$ to check. For these values of $N$, the $O(\theta_1(N))$ term will be bounded by a constant. So for $k$ sufficiently large, $\lrabs{-\chi(m) \frac{k-1}{12}} = \frac{k-1}{12}$ will be larger than that constant, and $a_2(m,N,k,\chi)$ will be nonvanishing.

    This shows that $a_2(m,N,k,\chi)$ is nonvanishing for all but finitely many triples $(N,k,\chi)$.

    Furthermore, note that when $\chi$ is trivial, $a_2(m,N,k)$ will be real. In this case, the expression inside the brackets of \eqref{eqn:main-theorem-not-perfect-square-final-eqn} becomes $- \frac{k-1}{12} + O(\theta_1(N))$. So in particular, $a_2(m,N,k)$ will be negative for all but finitely many pairs $(N,k)$.
\end{proof}

Finally, we prove Theorems \ref{thm:main} and \ref{thm:main-trivial-char} in the case when $m$ is a perfect square.

\begin{proposition} \label{prop:main-theorem-perfect-square}
    Let $m \geq 1$ be fixed and a perfect square. Suppose that $\gcd(N,m)=1$, $k \geq 2$, and $\chi$ is a Dirichlet character modulo $N$ with $\chi(-1) = (-1)^k$. Then $a_2(m,N,k,\chi)$ is nonvanishing for all but finitely many triples $(N,k,\chi)$. 
    Furthermore, when $\chi$ is trivial, 
    $a_2(m,N,k)$ is positive for all but finitely many pairs $(N,k)$.
\end{proposition}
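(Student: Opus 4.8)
The plan is to follow the template of Proposition~\ref{prop:main-theorem-not-perfect-square} verbatim through the evaluation of the sum, and then to exploit that $m$ is now itself a perfect square. Starting from Lemma~\ref{lem:a2-coeff-formula}, every index $m^2/d^2$ appearing in $\sum_{d\mid m}\chi(d)d^{k-1}\Tr T_{m^2/d^2}$ is a perfect square, so the computation in Proposition~\ref{prop:main-theorem-not-perfect-square} applies unchanged and gives
\begin{equation}
\sum_{d\mid m}\chi(d)d^{k-1}\Tr T_{m^2/d^2} = \chi(m)\frac{k-1}{12}\psi(N)m^{k-2}\sigma_1(m) + O(\sqrt{N}\,2^{\omega(N)}m^{k}).
\end{equation}
The genuinely new input is that, since $m$ is a perfect square, Lemma~\ref{lem:main-lemma-perfect-square} now also applies to $\Tr T_m$ itself, yielding $\Tr T_m = \chi(\sqrt{m})\frac{k-1}{12}\psi(N)m^{k/2-1} + O(\sqrt{N}\,2^{\omega(N)}m^{k/2})$.

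Next I would square this last expansion. Using complete multiplicativity of $\chi$ (so $\chi(\sqrt{m})^2=\chi(m)$, which is nonzero because $\gcd(m,N)=1$), the square is
\begin{equation}
(\Tr T_m)^2 = \chi(m)\left(\frac{k-1}{12}\right)^2\psi(N)^2 m^{k-2} + O\!\left((k-1)\psi(N)\sqrt{N}\,2^{\omega(N)}m^{k-1}\right) + O\!\left(N\,2^{2\omega(N)}m^{k}\right).
\end{equation}
The point is that this main term is \emph{quadratic} in $(k-1)\psi(N)$, whereas the sum term above is only linear and every error term is of strictly smaller order. Substituting both displays into Lemma~\ref{lem:a2-coeff-formula} and factoring out $\tfrac12\chi(m)\left(\tfrac{k-1}{12}\right)^2\psi(N)^2 m^{k-2}$, I expect
\begin{equation}
a_2(m,N,k,\chi) = \frac{\chi(m)}{2}\left(\frac{k-1}{12}\right)^2\psi(N)^2 m^{k-2}\left[\,1 + O(\theta_1(N))\,\right],
\end{equation}
where, after division by the quadratic main term, each remaining contribution acquires a factor $\tfrac{1}{k-1}\le 1$ (or $\tfrac{1}{(k-1)^2}$) multiplying one of $\theta_1(N)$, $\theta_1(N)^2$, or $\tfrac{1}{\psi(N)}\le\theta_1(N)$, and is therefore $O(\theta_1(N))$.

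The conclusion then follows exactly as in Proposition~\ref{prop:main-theorem-not-perfect-square}. The prefactor is nonzero since $|\chi(m)|=1$, so it suffices to show the bracket is nonzero. Because $\theta_1(N)\to0$ by Lemma~\ref{lem:omega-over-psi-bounds}, for all sufficiently large $N$ the error is $<\tfrac12$ in magnitude uniformly in $k$, so the bracket is nonzero for every $k$; this leaves only finitely many $N$. For each of these remaining $N$ the error retains its $\tfrac{1}{k-1}$ factor and hence tends to $0$ as $k\to\infty$, so the bracket is nonzero for all but finitely many $k$. This proves nonvanishing of $a_2(m,N,k,\chi)$ off a finite set of triples. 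When $\chi$ is trivial we have $\chi(m)=1$ and all quantities are real, so the prefactor is positive and the bracket is a real number close to $1$; hence $a_2(m,N,k)$ is positive for all but finitely many pairs $(N,k)$.

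The step I expect to be the main obstacle is controlling the cross term produced by squaring $\Tr T_m$. Its size $O((k-1)\psi(N)\sqrt{N}\,2^{\omega(N)}m^{k-1})$ is the largest of the competing contributions, and relative to the main term it equals $O\!\left(\tfrac{m}{k-1}\theta_1(N)\right)$; the decay in $N$ comes from $\theta_1(N)\to0$ while the factor $\tfrac{1}{k-1}\le1$ keeps it harmless for every $k$. Making this two-variable bookkeeping precise --- so that a single estimate simultaneously handles both the regime of large $N$ (all $k$) and the regime of fixed $N$ (large $k$) --- is the only delicate part; the rest is a direct adaptation of the case where $m$ is not a perfect square.
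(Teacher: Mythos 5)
Your proposal is correct and follows essentially the same route as the paper: expand the sum $\sum_{d\mid m}\chi(d)d^{k-1}\Tr T_{m^2/d^2}$ exactly as in the non-square case, apply Lemma \ref{lem:main-lemma-perfect-square} to $\Tr T_m$ itself, and observe that the squared main term $\chi(m)\frac{(k-1)^2}{144}\psi(N)^2m^{k-2}$ dominates everything else, with the two-regime argument (large $N$ uniformly in $k$, then large $k$ for the finitely many remaining $N$) closing the proof. The only cosmetic difference is your normalization of the bracket as $1+O(\theta_1(N))$ versus the paper's $\chi(m)\frac{(k-1)^2}{144k}+O(\theta_1(N))$ after factoring out $k\psi(N)^2m^{k-2}/2$; both bookkeepings are valid.
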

\begin{proof}
    By the same argument as in Proposition \ref{prop:main-theorem-not-perfect-square}, 
    \begin{align*}
        a_2(m,N,k,\chi) &= \frac{1}{2} \lrb{ (\Tr T_m)^2  - 
        \chi(m) \frac{k-1}{12} \psi(N) m^{k-2} \sigma_1(m) + O(\sqrt{N} 2^{\omega(N)} m^k) } .
    \end{align*}
    Then using the fact that $\sqrt{N} 2^{\omega(N)} = O\lrp{\psi(N)}$ from Lemma \ref{lem:omega-over-psi-bounds}, this yields
    \begin{align}
        a_2(m,N,k,\chi) &= \frac{1}{2} \lrb{ (\Tr T_m)^2  + O(k \psi(N) m^k) }. 
    \end{align}

    Since $m$ is a perfect square, we apply Lemma \ref{lem:main-lemma-perfect-square} to $\Tr T_m$, and we have
    \begin{align*}
        (\Tr T_m)^2 &= \lrp{  \chi(\sqrt{m}) \frac{k-1}{12} \psi(N) m^{k/2-1} + O(\sqrt{N} 2^{\omega(N)} m^{k/2})  }^2  \\
        &= \chi(m) \frac{(k-1)^2}{144} \psi(N)^2 m^{k-2} + O\lrp{k \psi(N) m^{k/2} \cdot  \sqrt{N} 2^{\omega(N)} m^{k/2}} + O\lrp{\sqrt{N} 2^{\omega(N)} m^{k/2}}^2 \\
        &= \chi(m) \frac{(k-1)^2}{144} \psi(N)^2 m^{k-2} + O\lrp{k \psi(N)  \sqrt{N} 2^{\omega(N)} m^k} .
    \end{align*}
Here, we again used the fact that $\sqrt{N} 2^{\omega(N)} = O\lrp{\psi(N)}$.
Then
    \begin{align}
        a_2(m,N,k,\chi) &= \frac{1}{2} \lrb{ (\Tr T_m)^2  + O(k \psi(N) m^k) } \\
        &= \frac{1}{2} \lrb{ \chi(m) \frac{(k-1)^2}{144} \psi(N)^2 m^{k-2} + O\lrp{k \psi(N)  \sqrt{N} 2^{\omega(N)} m^k} 
        }  \\ 
        &= \frac{k \psi(N)^2 m^{k-2}}{2} \lrb{ 
          \chi(m) \frac{(k-1)^2}{144k} + O(\theta_1(N))  
        }. \label{eqn:main-theorem-perfect-square-final-eqn}
    \end{align}

    Now, $\lrabs{\chi(m) \frac{(k-1)^2}{144k}} \geq \frac{1}{288}$ for all $k \geq 2$. But for sufficiently large $N$, the $O(\theta_1(N))$ term will be $< \frac{1}{288}$ in magnitude, according to Lemma \ref{lem:omega-over-psi-bounds}. Thus $a_2(m,N,k,\chi)$  will be nonvanishing for sufficiently large $N$.

    Again, this leaves only finitely many $N$ to check. For these values of $N$, the $O(\theta_1(N))$ term will be bounded by a constant, by Lemma \ref{lem:omega-over-psi-bounds}. So for $k$ sufficiently large, $\lrabs{\chi(m) \frac{(k-1)^2}{144k}} = \frac{(k-1)^2}{144k}$ will be larger than that constant, and $a_2(m,N,k,\chi)$ will be nonvanishing.

    This shows that $a_2(m,N,k,\chi)$ is nonvanishing for all but finitely many triples $(N,k,\chi)$.

    Furthermore, note that when $\chi$ is trivial, the expression inside the brackets of \eqref{eqn:main-theorem-perfect-square-final-eqn} becomes $\frac{(k-1)^2}{144k} + O(\theta_1(N))$. So in particular, $a_2(m,N,k)$ will be positive for all but finitely many pairs $(N,k)$.
\end{proof}

Propositions \ref{prop:main-theorem-not-perfect-square} and \ref{prop:main-theorem-perfect-square} combine to imply Theorems \ref{thm:main} and \ref{thm:main-trivial-char}. 

In these two proofs, we needed $\gcd(m,N)=1$ to use the fact that $\lrabs{\chi(m)} = 1$. 
It is worth noting that this is the first place that we need this coprimality assumption. In particular, all of the trace bounds given above still work even when $\gcd(m,N) > 1$. 

Note that all of the bounds given here are explicitly computable. So in Section \ref{sec:proofofT3}, we compute all of these bounds for $m=3$, and give the complete list of pairs $(N,k)$ for which $a_2(3,N,k)$ is positive or zero. In Section \ref{sec:proofofT4}, we compute the bounds for $m=4$, and give the complete list of pairs $(N,k)$ for which $a_2(4,N,k)$ is negative or zero.

\section{Proof of Theorem \ref{thm:T3} } \label{sec:proofofT3}
In this section, we show Theorem \ref{thm:T3}: for $N$ coprime to $3$ and $k \geq 2$ even, $a_2(3,N,k)$ is positive or zero only for the pairs $(N,k)$ given in Table \ref{table:T3-pairs-dim-a2}.

Observe that in the notation of Lemma \ref{lem:a2-coeff-formula}, for $\gcd(N,3)=1$ and $k \geq 2$ even, we have
\begin{equation} \label{eqn:a2-coeff-T3}
    a_2(3,N,k) = \frac{1}{2} \lrb{ (\Tr T_3)^2 - \Tr T_9 -3^{k-1} \Tr T_1 }.
\end{equation}

We bound the terms of \eqref{eqn:a2-coeff-T3} separately in the following three lemmas.
Each of these bounds will be expressed in terms of the $\theta_i(N)$ defined in Lemma \ref{lem:omega-over-psi-bounds}.

\begin{lemma} \label{lem:trace-T3-bound}
    Let $N \geq 1$ and $k \geq 2$ be even. Then
    \begin{align*}
        \frac{(\Tr T_3)^2}{\psi(N) 3^k } 
        &\leq  \frac{448 + 160\sqrt{3}}{27} \theta_2(N).   
    \end{align*}
\end{lemma}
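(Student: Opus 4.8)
The goal is to bound the quantity $(\Tr T_3)^2 / (\psi(N) 3^k)$ from above, so the natural strategy is to first produce a clean upper bound for $|\Tr T_3(N,k)|$ via the Eichler-Selberg trace formula \eqref{eqn:eichler-selberg-trace-formula}, then square it and divide by $\psi(N) 3^k$. Since $m=3$ is not a perfect square, we have $A_{1,3}=0$, so only $A_{2,3}$ and $A_{3,3}$ contribute (and $A_{4,3}=0$ since $k \geq 2$ even forces attention only to the $k=2$ case, where with trivial character we would need to track $\sum_{c \mid 3,\, (N,3/c)=1} c$; but in fact the dominant terms come from $A_{2,3}$ and $A_{3,3}$, and the $A_{4,3}$ contribution is $O(1)$ and should be absorbed or handled explicitly). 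So the first step is to compute the explicit bound on $A_{2,3}$ using Lemma \ref{lem:U-mu-bound}.

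\textbf{Step 1: Bounding $A_{2,3}$.}
For $m=3$, the condition $t^2 < 12$ gives $t \in \{0, \pm 1, \pm 2, \pm 3\}$. For each such $t$ I would apply Lemma \ref{lem:U-mu-bound}, which gives $|U_{k-1}(t,3)\mu(t,n,3)| \leq 2\psi(n) 2^{\omega(N)} 3^{(k-1)/2}$. The admissible $n$ (those with $n^2 \mid (t^2-12)$ and $(t^2-12)/n^2 \equiv 0,1 \pmod 4$) are all small, so $\psi(n)$ takes only finitely many tabulated values; combined with the small weighted class numbers $h_w$ from Table \ref{table:weighted-class-numbers}, the sum over $t$ and $n$ produces an \emph{explicit} constant times $2^{\omega(N)} 3^{k/2}$. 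I would carefully enumerate these finitely many $(t,n)$ pairs and sum up $h_w \cdot |U_{k-1}\mu|$ to obtain a bound of the shape $A_{2,3} \leq c_2 \cdot 2^{\omega(N)} 3^{k/2}$ for an explicit $c_2$.

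\textbf{Step 2: Bounding $A_{3,3}$.}
The divisors of $3$ are $d \in \{1,3\}$, and these two give coinciding terms, so effectively $d=1$ with $\min(1,3)^{k-1} = 1$. Here $d=1 \neq \sqrt 3$, so the first case of Lemma \ref{lem:sigma-N-m-d-bound} applies: $|\Sigma(N,3,1)| \leq |1-3| \cdot 2^{\omega(N)} = 2 \cdot 2^{\omega(N)}$. This yields $A_{3,3} \leq c_3 \cdot 2^{\omega(N)}$ for an explicit $c_3$.

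\textbf{Step 3: Combine, square, and normalize.}
Adding the bounds gives $|\Tr T_3| \leq (c_2 \cdot 3^{k/2} + c_3 + O(1)) 2^{\omega(N)}$. Since $3^{k/2} \geq 3$ dominates the lower-order terms for all $k \geq 2$, I would fold everything into a single bound $|\Tr T_3| \leq C \cdot 2^{\omega(N)} 3^{k/2}$ with $C$ explicit. Squaring gives $(\Tr T_3)^2 \leq C^2 (2^{\omega(N)})^2 3^k$, and dividing by $\psi(N) 3^k$ produces $(\Tr T_3)^2/(\psi(N)3^k) \leq C^2 \theta_2(N)$, recalling $\theta_2(N) = (2^{\omega(N)})^2/\psi(N)$. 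The target constant $\frac{448 + 160\sqrt 3}{27}$ should emerge as exactly $C^2$, so the final step is bookkeeping to verify that the explicit contributions from $A_{2,3}$ (which carry the $\sqrt 3 = \sqrt{m}$ dependence through $m^{(k-1)/2}$ and the class-number weights) and $A_{3,3}$ combine to produce precisely this constant.

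\textbf{Main obstacle.}
The hard part will be Step 1: tracking the exact constant rather than just the order of magnitude. This requires correctly enumerating all admissible $(t,n)$ pairs for $m=3$, reading off the right weighted class numbers from Table \ref{table:weighted-class-numbers}, and summing the contributions so that the $\sqrt 3$ terms assemble into $160\sqrt 3$ and the rational terms into $448$, all divided by $27$. The factor-of-two subtleties (symmetric $\pm t$ terms, the $\frac12$ in front of $A_{2,3}$ and $A_{3,3}$) and ensuring that the $3^{(k-1)/2}$ versus $3^{k/2}$ normalization is handled consistently are the places most likely to introduce arithmetic errors. Everything else is a routine application of the already-established Lemmas \ref{lem:U-mu-bound} and \ref{lem:sigma-N-m-d-bound} together with the definition of $\theta_2$.
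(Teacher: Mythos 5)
Your plan is exactly the paper's proof: $A_{1,3}=0$ since $3$ is not a square, an explicit enumeration of the $(t,n)$ pairs with Lemma \ref{lem:U-mu-bound} and Table \ref{table:weighted-class-numbers} giving $|A_{2,3}|\leq \frac{20}{3\sqrt{3}}2^{\omega(N)}3^{k/2}$, the first case of Lemma \ref{lem:sigma-N-m-d-bound} with $|1-3|=2$ for $A_{3,3}$, absorption of the $O(1)$ terms (including $A_{4,3}\leq 4$, which is not zero but is handled exactly as you suggest, using $3^{k/2}\geq 3$), and then squaring $\bigl(\frac{20}{3\sqrt{3}}+\frac{4}{3}\bigr)^2=\frac{448+160\sqrt{3}}{27}$ and dividing by $\psi(N)3^k$ to land on $\theta_2(N)$. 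The approach and all key inputs match; only the arithmetic bookkeeping you flagged remains to be carried out.
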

\begin{proof}
We examine each of the $A_{i,3}$ terms from \eqref{eqn:eichler-selberg-trace-formula} separately.

First, by \eqref{eqn:A1m-formula},
$$A_{1,3} = \chi_0(\sqrt{3}) \frac{k-1}{12} \psi (N) 3^{k/2-1} = 0.$$

Second, by \eqref{eqn:A2m-formula} (recalling that the $t=t_0$ and $t=-t_0$ terms coincide) and Table \ref{table:weighted-class-numbers},
\begin{align}
    A_{2,3} = &\frac{1}{2} \sum_{t^2 < 12} U_{k-1}(t,3) \sum_{n}  h_w \lrp{ \frac{t^2-12}{n^2} } \mu (t,n,3) \\
    = &\frac{1}{2} U_{k-1}(0,3) \sum_{n}  h_w \lrp{ \frac{-12}{n^2} } \mu(0,n,3) \\
    &+ U_{k-1}(1,3)  \sum_{n}  h_w \lrp{ \frac{-11}{n^2} } \mu (1,n,3) \\
    &+ U_{k-1}(2,3) \sum_{n}  h_w \lrp{ \frac{-8}{n^2} } \mu (2,n,3) \\
    &+ U_{k-1}(3,3) \sum_{n}  h_w \lrp{ \frac{-3}{n^2} } \mu (3,n,3) \\
    = &\frac{1}{2} U_{k-1}(0,3) \lrb{ h_w(-12) \mu(0,1,3) + h_w(-3) \mu(0,2,3) } \\
    &+ U_{k-1}(1,3) \lrb{ h_w(-11) \mu (1,1,3) } \\
    &+ U_{k-1}(2,3) \lrb{ h_w(-8) \mu(2,1,3) } \\
    &+ U_{k-1}(3,3) \lrb{ h_w(-3) \mu(3,1,3) } \\
    = &\frac{1}{2} U_{k-1}(0,3) [ \mu(0,1,3) + \frac{1}{3} \mu(0,2,3) ] \\
    &+U_{k-1}(1,3) [ \mu (1,1,3) ] \\
    &+U_{k-1}(2,3) [ \mu(2,1,3) ] \\
    &+U_{k-1}(3,3) [ \frac{1}{3} \mu(3,1,3) ]. 
\end{align}
So by Lemma \ref{lem:U-mu-bound},
\begin{align}
    \lrabs{A_{2,3}} &\leq  \, 2^{\omega(N)} 3^{(k-1)/2}\cdot 2 \lrb{\frac12 \lrp{1 + \frac{1}{3} \cdot 3} + (1) + (1) + \lrp{\frac13 } } \\
    &= \frac{20}{3 \sqrt{3}} 2^{\omega(N)} 3^{k/2}.
\end{align}

Third, by \eqref{eqn:A3m-formula} (recalling that the $d=d_0$ and $d=m/d_0$ terms coincide) and Lemma \ref{lem:sigma-N-m-d-bound}, 
\begin{align}
    A_{3,3} &= \frac{1}{2} \sum_{d|3} \min(d,3/d)^{k-1} \Sigma(N,3,d) \\
    &= \Sigma(N,3,1)  \\
    &\leq  |-2| \cdot 2^{\omega(N)} \\
    &= 2\cdot 2^{\omega(N)}.
\end{align}

Fourth, by \eqref{eqn:A4m-formula},
\begin{align}
   A_{4,3}  &\leq \sum_{c\mid 3} c \ =  1+3=4. 
\end{align}

Now, note that $A_{3,3}$ and $A_{4,3}$ here are both positive and $\leq 4 \cdot 2^{\omega(N)}$. So putting all this together, we obtain 
\begin{align} 
    |\Tr T_3 | &= |A_{1,3} - A_{2,3} - A_{3,3} + A_{4,3} | \\
    &\leq |A_{2,3}| + |A_{3,3} - A_{4,3} | \\
    &\leq \frac{20}{3\sqrt{3}} \cdot 2^{\omega(N)} 3^{k/2} + 4 \cdot 2^{\omega(N)} \\
    &\leq \frac{20}{3\sqrt{3}} \cdot 2^{\omega(N)} 3^{k/2} + \frac{4}{3} \cdot 3^{k/2}\,2^{\omega(N)} \\
    &= \lrp{\frac{20}{3\sqrt{3}} + \frac43}  2^{\omega(N)} 3^{k/2}  ,
\end{align}
which yields
\begin{align}
    \frac{(\Tr T_3)^2}{\psi(N) 3^k } 
    &\leq   \frac{\lrp{\frac{20}{3\sqrt{3}} + \frac43}^2  (2^{\omega(N)})^2 3^k}{\psi(N) 3^k} \\
    &= \frac{448 + 160\sqrt{3}}{27} \theta_2(N) ,
\end{align}
as desired.
\end{proof}

Next, we bound the $\Tr T_9$ term from \eqref{eqn:a2-coeff-T3}.

\begin{lemma} \label{lem:trace-T9-bound}
Let $N \geq 1$ with $\gcd(N,3)=1$ and $k \geq 2$ be even. Then
\begin{align}  
    \lrabs{\frac{\Tr T_9 - A_{1,9}}{\psi(N) 3^k}} 
    &\leq \frac{65}{6} \theta_3(N) + \frac{1}{6} \theta_1(N) ,
\end{align}
where
$$A_{1,9} = \frac{k-1}{108} \psi(N) 3^k .$$
\end{lemma}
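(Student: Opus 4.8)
The plan is to mirror the structure of Lemma \ref{lem:trace-T3-bound}: apply the Eichler-Selberg trace formula \eqref{eqn:eichler-selberg-trace-formula} to $T_9 = T_9(N,k)$ (trivial character), so that $\Tr T_9 - A_{1,9} = -A_{2,9} - A_{3,9} + A_{4,9}$, and bound the three remaining terms separately. First I would check the claimed value of $A_{1,9}$: since $\gcd(N,3)=1$ we have $\chi_0(\sqrt 9) = \chi_0(3) = 1$, so by \eqref{eqn:A1m-formula}, $A_{1,9} = \frac{k-1}{12}\psi(N)\,9^{k/2-1} = \frac{k-1}{108}\psi(N)3^k$, exactly as stated. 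It then remains to estimate $\lrabs{-A_{2,9}-A_{3,9}+A_{4,9}}/(\psi(N)3^k)$.

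For $A_{2,9}$, the outer sum in \eqref{eqn:A2m-formula} runs over $t$ with $t^2 < 36$, i.e. $t \in \{0,\pm1,\dots,\pm5\}$, and for each such $t$ the inner sum runs over the finitely many $n$ with $n^2 \mid (t^2-36)$ and $(t^2-36)/n^2 \equiv 0,1 \pmod 4$. I would enumerate these pairs $(t,n)$, read the weighted class numbers $h_w\!\lrp{(t^2-36)/n^2}$ off Table \ref{table:weighted-class-numbers}, and apply Lemma \ref{lem:U-mu-bound} term by term (using that the $t$ and $-t$ terms coincide). The crucial point for obtaining the stated constant is to use the sharper factor $\psi(N_n)$, $N_n = \gcd(N,n)$, in place of $\psi(n)$ in Lemma \ref{lem:U-mu-bound}: the terms with $n=3$ occur only at $t=0$ and $t=\pm3$, and the hypothesis $\gcd(N,3)=1$ forces $N_3 = 1$, so $\psi(N)/\psi(N/N_3) = 1$ rather than $\psi(3)=4$. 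Summing the contributions then gives $\lrabs{A_{2,9}} \leq c\cdot 2^{\omega(N)}3^{k-1}$ for an explicit rational $c$, which after dividing by $\psi(N)3^k$ is a bound of order $\theta_3(N)$.

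For $A_{3,9}$, the divisors of $9$ are $d\in\{1,3,9\}$, with $d=1$ and $d=9$ coinciding and $d=3=\sqrt 9$ the middle term. For $d=1$ I would invoke the first case of Lemma \ref{lem:sigma-N-m-d-bound}, giving $\lrabs{\Sigma(N,9,1)} \leq \lrabs{1-9}\,2^{\omega(N)} = 8\cdot 2^{\omega(N)}$ and hence another $\theta_3(N)$ contribution after dividing by $3^k$ (bounding $3^{-k}\le \tfrac19$ since $k\ge2$). For $d=3$, where $d=\sqrt m$, only the general case of Lemma \ref{lem:sigma-N-m-d-bound} applies, so $\lrabs{\Sigma(N,9,3)} \leq \sqrt N\,2^{\omega(N)}$, weighted by $\tfrac12\min(3,3)^{k-1}=\tfrac12 3^{k-1}$; dividing by $\psi(N)3^k$ produces exactly the $\tfrac16\theta_1(N)$ term. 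Finally $A_{4,9}$ is nonzero only when $k=2$, where $\gcd(N,3)=1$ makes every coprimality condition automatic and $A_{4,9}=\sum_{c\mid 9}c = 13$; this contributes a further $O(\theta_3)$ term via $1/\psi(N)\le \theta_3(N)$.

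Assembling the pieces, the only $\theta_1(N)$ contribution comes from the $d=3$ term of $A_{3,9}$ and equals $\tfrac16\theta_1(N)$, while the $\theta_3(N)$ contributions from $A_{2,9}$, the $d=1$ part of $A_{3,9}$, and $A_{4,9}$ combine to a coefficient bounded by $\tfrac{65}{6}$; the worst case is $k=2$, where $A_{4,9}$ and the $8\cdot2^{\omega(N)}/3^k$ term are largest, and bounding $3^{-k}\le \tfrac19$ throughout makes the estimate uniform in $k$. I expect the main obstacle to be purely computational bookkeeping: carrying out the enumeration for $A_{2,9}$ without error and, above all, remembering to exploit $\gcd(N,3)=1$ to replace $\psi(3)$ by $\psi(N_3)=1$ on the $n=3$ terms, since without this sharpening the summed constant would exceed $\tfrac{65}{6}$.
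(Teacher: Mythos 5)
Your proposal is correct and reaches the stated constants, but it closes the key numerical gap by a different device than the paper. Both arguments share the same skeleton: $\Tr T_9 - A_{1,9} = -A_{2,9}-A_{3,9}+A_{4,9}$, an explicit enumeration of the pairs $(t,n)$ for $A_{2,9}$ with Lemma \ref{lem:U-mu-bound}, the two cases of Lemma \ref{lem:sigma-N-m-d-bound} for $d=1$ versus $d=3$ (the latter producing exactly the $\tfrac16\theta_1(N)$ term), and $A_{4,9}\le 13$. The difference is in how $\tfrac{65}{6}$ is squeezed out. The paper uses Lemma \ref{lem:U-mu-bound} as stated, with $\psi(3)=4$ on the $n=3$ terms, obtaining $\lrabs{A_{2,9}}\le \tfrac{86}{9}\,2^{\omega(N)}3^k$, and then recovers the needed slack by noting that $A_{3,9}$ and $A_{4,9}$ are both nonnegative, so $\lrabs{A_{3,9}-A_{4,9}}$ is bounded by a common majorant $\tfrac16 3^k\sqrt{N}2^{\omega(N)}+\tfrac{23}{2}2^{\omega(N)}$ rather than by the sum $8\cdot 2^{\omega(N)}+13+\cdots$; this gives exactly $\tfrac{86}{9}+\tfrac{23}{18}=\tfrac{65}{6}$. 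You instead apply the crude triangle inequality to all three terms (which, as you correctly observe, would give $\tfrac{86}{9}+\tfrac{8}{9}+\tfrac{13}{9}=\tfrac{107}{9}>\tfrac{65}{6}$ without further input) and compensate by sharpening Lemma \ref{lem:U-mu-bound} on the $n=3$ terms at $t=0,\pm3$, replacing $\psi(3)$ by $\psi(N_3)=1$ via $\gcd(N,3)=1$; this is legitimate, since the proof of that lemma actually establishes $\psi(N)/\psi(N/N_n)\le\psi(N_n)$, and it yields $\lrabs{A_{2,9}}\le\tfrac{151}{18}2^{\omega(N)}3^k$ and a total coefficient $\tfrac{193}{18}<\tfrac{65}{6}$. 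So your route works (and is marginally sharper), but your characterization of the $\psi(N_n)$ refinement as \emph{the} crucial point is inaccurate as a statement about the lemma itself: the paper proves the same bound without it. Note also that you would need to cite the intermediate inequality from the proof of Lemma \ref{lem:U-mu-bound}, since the lemma as stated only provides the $\psi(n)$ factor.
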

\begin{proof}
We examine each of the $A_{i,9}$ terms from \eqref{eqn:eichler-selberg-trace-formula} separately.

First, by \eqref{eqn:A1m-formula},
\begin{align}
 A_{1,9} &= \chi_0(\sqrt{9}) \frac{k-1}{12} \psi (N) 9^{k/2-1} = \frac{k-1}{108} \psi (N) 3^k ,
\end{align}
as claimed.

Second, similarly to the proof of Lemma \ref{lem:trace-T3-bound}, we compute
\begin{align}
    A_{2,9} = &\frac{1}{2} \sum_{t^2 < 36} U_{k-1}(t,9) \sum_{n}  h_w \lrp{ \frac{t^2-36}{n^2} } \mu (t,n,9) \\
    = &\frac{1}{2} U_{k-1}(0,9) \sum_{n}  h_w \lrp{ \frac{-36}{n^2} } \mu(0,n,9) \\
     &+U_{k-1}(1,9) \sum_{n}  h_w \lrp{ \frac{-35}{n^2} } \mu(1,n,9) \\
     &+U_{k-1}(2,9) \sum_{n}  h_w \lrp{ \frac{-32}{n^2} } \mu(2,n,9) \\
     &+U_{k-1}(3,9) \sum_{n}  h_w \lrp{ \frac{-27}{n^2} } \mu(3,n,9) \\
     &+U_{k-1}(4,9) \sum_{n}  h_w \lrp{ \frac{-20}{n^2} } \mu(4,n,9) \\
     &+U_{k-1}(5,9) \sum_{n}  h_w \lrp{ \frac{-11}{n^2} } \mu(5,n,9) \\
     = &\frac{1}{2} U_{k-1}(0,9) \lrb{ h_w(-36)\mu(0,1,9) + h_w(-4)\mu(0,3,9) } \\
     &+U_{k-1}(1,9) \lrb{ h_w(-35)\mu(1,1,9) } \\
     &+U_{k-1}(2,9) \lrb{ h_w(-32) \mu(2,1,9) + h_w(-8)\mu(2,2,9) } \\
     &+U_{k-1}(3,9) \lrb{ h_w(-27)\mu(3,1,9) + h_w(-3)\mu(3,3,9) } \\
     &+U_{k-1}(4,9) \lrb{ h_w(-20) \mu(4,1,9) } \\
     &+U_{k-1}(5,9) \lrb{ h_w(-11) \mu(5,1,9) } \\
     = &\frac{1}{2} U_{k-1}(0,9) [ 2\mu(0,1,9) + \frac{1}{2} \mu(0,3,9) ] \\
     &+U_{k-1}(1,9) \lrb{ 2\mu(1,1,9) } \\
     &+U_{k-1}(2,9) \lrb{ 2\mu(2,1,9) + \mu(2,2,9) } \\
     &+U_{k-1}(3,9) [ \mu(3,1,9) + \frac{1}{3} \mu(3,3,9) ] \\
     &+U_{k-1}(4,9) \lrb{ 2\mu(4,1,9) } \\
     &+U_{k-1}(5,9) \lrb{ \mu(5,1,9) }.
\end{align}
So by Lemma \ref{lem:U-mu-bound},
\begin{align}
     |A_{2,9}| \leq & \, 2^{\omega(N)} 9^{(k-1)/2} \cdot 2 \lrb{\frac12 \left(2+\frac{1}{2}\cdot 4\right) + (2) + (2 + 3) + \left(1 + \frac13\cdot 4 \right) + (2) + (1) } \\
     =&  \frac{86}{9}\cdot 2^{\omega(N)} 3^k .
\end{align}
Third, by \eqref{eqn:A3m-formula} and Lemma \ref{lem:sigma-N-m-d-bound},
\begin{align}
    A_{3,9} &= \frac{1}{2} \sum_{d|9} \min(d,9/d)^{k-1} \Sigma(N,9,d) \\
    &=  \Sigma(N,9,1) + \frac{1}{2} \cdot3^{k-1} \Sigma(N,9,3)  \\
    &\leq  |-8|\cdot  2^{\omega(N)} + \frac{1}{2}\cdot 3^{k-1} \sqrt{N} 2^{\omega(N)} \\
    &= \frac{1}{6}\cdot 3^k \sqrt{N} 2^{\omega(N)} + 8 \cdot 2^{\omega(N)} .
\end{align}
Fourth, by \eqref{eqn:A4m-formula},
\begin{align}
   A_{4,9}  &\leq \sum_{c|9} c \ =  1+3+9=13 .
\end{align}
Now, note that $A_{3,9}$ and $A_{4,9}$ here are both positive and $\leq \frac{1}{6} \cdot 3^k \sqrt{N} 2^{\omega(N)} + \frac{23}{2} \cdot 2^{\omega(N)}$.
So putting this all together, we obtain
\begin{align}
    \lrabs{\Tr T_9 - A_{1,9}}
    &= \lrabs{- A_{2,9} - A_{3,9} + A_{4,9}} \\
    &\leq \lrabs{A_{2,9}} + \lrabs{A_{3,9} - A_{4,9}} \\
    &\leq \frac{86}{9}\cdot 2^{\omega(N)} 3^k + \frac{1}{6}\cdot 3^k \sqrt{N} 2^{\omega(N)} + \frac{23}{2}\cdot 2^{\omega(N)} \\
    &\leq \frac{86}{9}\cdot 2^{\omega(N)} 3^k + \frac{1}{6}\cdot 3^k \sqrt{N} 2^{\omega(N)} + \frac{23}{18}\cdot 3^k 2^{\omega(N)} \\
    &= \frac{65}{6}\cdot 2^{\omega(N)} 3^k + \frac{1}{6}\cdot 3^k \sqrt{N} 2^{\omega(N)} , 
\end{align}
which yields
\begin{align}
    \lrabs{\frac{\Tr T_9 - A_{1,9}}{\psi(N) 3^k}} 
    &\leq \frac{65}{6} \theta_3(N) + \frac{1}{6} \theta_1(N),
\end{align}
as desired.
\end{proof}

Finally, we bound the $\Tr T_1$ term from \eqref{eqn:a2-coeff-T3}.

\begin{lemma} \label{lem:trace-T1-bound1}
    Let $N \geq 1$ and $k \geq 2$ be even. Then
    $$\lrabs{\frac{\Tr T_1 - A_{1,1}}{\psi(N)}} \leq \frac{5}{3} \theta_3(N) + \frac{1}{2} \theta_1(N), $$
    where 
    $$A_{1,1} = \frac{k-1}{12} \psi(N) .$$
\end{lemma}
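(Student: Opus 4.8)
The plan is to follow the template established in the proofs of Lemmas \ref{lem:trace-T3-bound} and \ref{lem:trace-T9-bound}, applying the Eichler--Selberg trace formula \eqref{eqn:eichler-selberg-trace-formula} with $m=1$ and trivial character. Since $\Tr T_1 - A_{1,1} = -A_{2,1} - A_{3,1} + A_{4,1}$, it suffices to bound $A_{2,1}$ and the difference $A_{3,1} - A_{4,1}$, each as a combination of $2^{\omega(N)}$ and $\sqrt{N}\,2^{\omega(N)}$, and then divide through by $\psi(N)$ to produce the $\theta_3(N)$ and $\theta_1(N)$ terms. First I would record that $A_{1,1} = \chi_0(\sqrt{1})\tfrac{k-1}{12}\psi(N) = \tfrac{k-1}{12}\psi(N)$ directly from \eqref{eqn:A1m-formula}, which is the stated value.

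For $A_{2,1}$, the outer sum in \eqref{eqn:A2m-formula} runs over $t$ with $t^2 < 4$, i.e. $t \in \{0,\pm 1\}$, with the $t = \pm 1$ terms coinciding. I would enumerate the admissible inner indices $n$: for $t=0$ only $n=1$ survives (giving discriminant $-4$), and for $t=1$ only $n=1$ survives (giving discriminant $-3$), since $n=2$ produces a residue $\not\equiv 0,1 \pmod 4$. Reading off $h_w(-4) = \tfrac12$ and $h_w(-3) = \tfrac13$ from Table \ref{table:weighted-class-numbers} and applying Lemma \ref{lem:U-mu-bound} (with $\psi(1)=1$ and $m^{(k-1)/2}=1$), I expect
$$\lrabs{A_{2,1}} \leq 2\cdot 2^{\omega(N)}\lrb{\tfrac12\cdot\tfrac12 + \tfrac13} = \tfrac{7}{6}\,2^{\omega(N)}.$$

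For $A_{3,1}$, the only divisor of $1$ is $d=1$, which equals $\sqrt{m}$ and is self-paired, so the factor $\tfrac12$ in \eqref{eqn:A3m-formula} survives and $A_{3,1} = \tfrac12\,\Sigma(N,1,1)$. Because $d = \sqrt{m}$, only the ``in general'' case of Lemma \ref{lem:sigma-N-m-d-bound} applies, giving $0 \leq A_{3,1} \leq \tfrac12\sqrt{N}\,2^{\omega(N)}$, where non-negativity uses $\chi = \chi_0$. Meanwhile \eqref{eqn:A4m-formula} gives $0 \leq A_{4,1} \leq 1$. As in the earlier lemmas, since both $A_{3,1}$ and $A_{4,1}$ are non-negative and each is bounded by $\tfrac12\sqrt{N}\,2^{\omega(N)} + \tfrac12\,2^{\omega(N)}$ (the constant $1$ being absorbed via $2^{\omega(N)} \geq 1$ and $\sqrt{N}\,2^{\omega(N)} \geq 1$), I obtain $\lrabs{A_{3,1} - A_{4,1}} \leq \tfrac12\sqrt{N}\,2^{\omega(N)} + \tfrac12\,2^{\omega(N)}$. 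Combining via the triangle inequality yields $\lrabs{\Tr T_1 - A_{1,1}} \leq \tfrac{5}{3}\,2^{\omega(N)} + \tfrac12\sqrt{N}\,2^{\omega(N)}$, and dividing by $\psi(N)$ gives exactly $\tfrac53\theta_3(N) + \tfrac12\theta_1(N)$ by the definitions in Lemma \ref{lem:omega-over-psi-bounds}.

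The computation is entirely routine given the two preceding lemmas; the only points requiring care are the bookkeeping of the $\pm t$ and $d \leftrightarrow m/d$ coincidence factors, the observation that $d=1=\sqrt{m}$ forces the weaker $\sqrt{N}$ bound from Lemma \ref{lem:sigma-N-m-d-bound} rather than the sharper one, and pinning down the constants so that the leftover $\tfrac12\,2^{\omega(N)}$ from absorbing $A_{4,1}$ combines with the $\tfrac76$ from $A_{2,1}$ to give precisely $\tfrac53$. I do not anticipate any genuine obstacle beyond this constant-tracking.
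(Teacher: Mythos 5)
Your proposal is correct and follows essentially the same route as the paper: compute $A_{1,1}$ from \eqref{eqn:A1m-formula}, bound $|A_{2,1}|$ by $\tfrac{7}{6}2^{\omega(N)}$ via Lemma \ref{lem:U-mu-bound}, bound $A_{3,1}=\tfrac12\Sigma(N,1,1)$ by the general case of Lemma \ref{lem:sigma-N-m-d-bound}, absorb $A_{4,1}\leq 1$, and divide by $\psi(N)$. The constants and the final combination match the paper's proof exactly.
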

\begin{proof}
    We examine each of the $A_{i,1}$ terms from \eqref{eqn:eichler-selberg-trace-formula} separately.
    
    First, by \eqref{eqn:A1m-formula},   
    \begin{align}
        A_{1,1} &= \chi_0(\sqrt{1}) \frac{k-1}{12} \psi (N) 1^{k/2-1} = \frac{k-1}{12} \psi (N) ,
    \end{align}
    as claimed.
    
    Second, by \eqref{eqn:A2m-formula},
    \begin{align}
        A_{2,1} =& \frac{1}{2} \sum_{t^2 < 4} U_{k-1}(t,1) \sum_{n}  h_w \lrp{ \frac{t^2-4}{n^2} } \mu (t,n,1) \\
        =& \frac{1}{2} U_{k-1}(0,1) \sum_{n}  h_w \lrp{ \frac{-4}{n^2} } \mu(0,n,1) 
         + U_{k-1}(1,1) \sum_{n}  h_w \lrp{ \frac{-3}{n^2} } \mu(1,n,1) \\
        =& \frac{1}{2} U_{k-1}(0,1) h_w \lrp{ -4 } \mu(0,1,1)  + U_{k-1}(1,1) h_w \lrp{ -3 } \mu(1,1,1) \\
        =& \frac{1}{2} \cdot \frac{1}{2} U_{k-1}(0,1) \mu(0,1,1)  + \frac{1}{3} U_{k-1}(1,1) \mu(1,1,1).
    \end{align}
    So by Lemma \ref{lem:U-mu-bound},
    \begin{align}
        \lrabs{A_{2,1}} \leq \frac{1}{4} \cdot 2 \cdot 2^{\omega(N)} + \frac{1}{3} \cdot 2 \cdot 2^{\omega(N)} = \frac{7}{6}\cdot 2^{\omega(N)} .
    \end{align}
    Third, by \eqref{eqn:A3m-formula} and Lemma \ref{lem:sigma-N-m-d-bound},
    \begin{align}
        A_{3,1} &= \frac{1}{2} \sum_{d|1} \min(d,1/d)^{k-1} \Sigma(N,1,d) \\
        &=  \frac{1}{2} \Sigma(N,1,1)  \\
        &\leq \frac{1}{2} \sqrt{N} 2^{\omega(N)} .
    \end{align}
    Fourth, by \eqref{eqn:A4m-formula},
    $$A_{4,1} \leq \sum_{c|1} c \ = 1 .$$

    Now, note that $A_{3,1}$ and $A_{4,1}$ are both positive and $\leq \frac{1}{2} \sqrt{N} 2^{\omega(N)} + \frac{1}{2} \cdot 2^{\omega(N)}$. So putting all this together, we obtain
    \begin{align}
        \lrabs{\Tr T_1 - A_{1,1}}  
        &= \lrabs{ -A_{2,1} - A_{3,1} + A_{4,1} }  \\
        &\leq  \lrabs{A_{2,1}} + \lrabs{A_{3,1} - A_{4,1}} \\
        &\leq  \frac{7}{6}\cdot 2^{\omega(N)} + \frac{1}{2} \sqrt{N} 2^{\omega(N)} + \frac{1}{2}\cdot 2^{\omega(N)} \\
        &=  \frac{5}{3}\cdot 2^{\omega(N)} + \frac{1}{2} \sqrt{N} 2^{\omega(N)} ,
    \end{align}
    which yields
    \begin{align}
        \lrabs{\frac{\Tr T_1 - A_{1,1}}{\psi(N)}} &\leq \frac{5}{3} \theta_3(N) + \frac{1}{2} \theta_1(N), 
    \end{align}
    completing the proof.   
\end{proof}
Note that $\Tr T_1$ is just the dimension of $S_k(\Gamma_0(N))$. So Lemma \ref{lem:trace-T1-bound1} is just a dimension estimate. And in fact, one can also derive the bounds in this lemma from the dimension formula given in {\cite[p.~264]{cohen-stromberg}}.

We are now ready to prove Theorem \ref{thm:T3}.

{
\renewcommand{\thetheorem}{\ref{thm:T3}}
\begin{theorem}
    Suppose that $\gcd(N,3)=1$ and that $k \geq 2$ is even.
    Then the second coefficient $a_2(3,N,k)$ is positive or zero only for the pairs $(N,k)$ given in Table \ref{table:T3-pairs-dim-a2}.
\end{theorem}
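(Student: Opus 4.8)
The plan is to normalize the exact identity \eqref{eqn:a2-coeff-T3} by dividing through by $\psi(N)3^k$ and then to feed in the three term-by-term bounds from Lemmas \ref{lem:trace-T3-bound}, \ref{lem:trace-T9-bound}, and \ref{lem:trace-T1-bound1}, so as to isolate a strictly negative main term. Writing
\[
\frac{2 a_2(3,N,k)}{\psi(N) 3^k} = \frac{(\Tr T_3)^2}{\psi(N) 3^k} - \frac{\Tr T_9}{\psi(N) 3^k} - \frac{\Tr T_1}{3\psi(N)},
\]
I would substitute the explicit main parts $A_{1,9} = \frac{k-1}{108}\psi(N)3^k$ and $A_{1,1} = \frac{k-1}{12}\psi(N)$ of $\Tr T_9$ and $\Tr T_1$. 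Their combined contribution is $-\frac{k-1}{108} - \frac{1}{3}\cdot\frac{k-1}{12} = -\frac{k-1}{27}$, which for $k \geq 2$ is the negative leading term that drives the whole expression.

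Next I would absorb the remaining pieces into the functions $\theta_i(N)$ of Lemma \ref{lem:omega-over-psi-bounds}: the $(\Tr T_3)^2$ term is bounded by a multiple of $\theta_2(N)$, while the remainders $\Tr T_9 - A_{1,9}$ and $\Tr T_1 - A_{1,1}$ are bounded by multiples of $\theta_3(N)$ and $\theta_1(N)$. Collecting the constants from the three lemmas yields the master inequality
\[
\frac{2 a_2(3,N,k)}{\psi(N) 3^k} \leq -\frac{k-1}{27} + \frac{448+160\sqrt{3}}{27}\theta_2(N) + \frac{205}{18}\theta_3(N) + \frac{1}{3}\theta_1(N).
\]
Since each $\theta_i(N) \to 0$ and $k-1 \geq 1$, the right-hand side is negative once $N$ exceeds an explicit threshold $N_0$, obtained by choosing $N_0$ from the numerical table in Lemma \ref{lem:omega-over-psi-bounds} so that the $\theta$-combination is $< \tfrac{1}{27}$. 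This forces $a_2(3,N,k) < 0$ for all $N \geq N_0$ and all even $k \geq 2$, uniformly in $k$.

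For the finitely many $N < N_0$ I would treat each $\theta_i(N)$ as a fixed constant, so the master inequality reads $\frac{2a_2(3,N,k)}{\psi(N)3^k} \leq -\frac{k-1}{27} + C(N)$ for an explicit $C(N)$. Hence $a_2(3,N,k) < 0$ as soon as $k - 1 > 27\,C(N)$, which bounds $k$ explicitly in terms of $N$; moreover the tiered bounds in Lemma \ref{lem:omega-over-psi-bounds} make $C(N)$, and thus this $k$-bound, shrink as $N$ grows. Together these confine all pairs with $a_2(3,N,k) \geq 0$ to a genuinely finite, explicitly computable region, after which an exhaustive computer search using \eqref{eqn:a2-coeff-T3} and the Eichler--Selberg trace formula (cf. \cite{ross-code}) records exactly the pairs of Table \ref{table:T3-pairs-dim-a2}.

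The main obstacle is quantitative rather than conceptual: the coefficient $\frac{448+160\sqrt{3}}{27} \approx 27$ attached to $\theta_2(N)$ is large, so $N_0$ must be taken on the order of tens of millions before the error term falls below $\tfrac{1}{27}$. Keeping the resulting finite search region small enough to verify in practice — by exploiting the successively sharper $\theta$-bounds across the tiers of Lemma \ref{lem:omega-over-psi-bounds} to tighten the per-$N$ bound on $k$ — is where the real care is required.
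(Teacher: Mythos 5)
Your proposal is correct and follows essentially the same route as the paper: normalize \eqref{eqn:a2-coeff-T3} by $\psi(N)3^k$, extract the main terms $A_{1,9}$ and $A_{1,1}$ to produce the leading $-\frac{k-1}{27}$, bound the error by $\frac{448+160\sqrt{3}}{27}\theta_2(N)+\frac{205}{18}\theta_3(N)+\frac{1}{3}\theta_1(N)$, and use the tiered numerical bounds of Lemma \ref{lem:omega-over-psi-bounds} to reduce to a finite computer check. The paper's own proof carries out exactly this tiered reduction (e.g.\ $N\geq 63{,}000{,}000$ for all $k\geq 2$, down to $N\geq 1$, $k\geq 1290$).
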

\addtocounter{theorem}{-1}
}

\begin{proof} 
    By \eqref{eqn:a2-coeff-T3} and Lemmas \ref{lem:trace-T9-bound} and \ref{lem:trace-T1-bound1},
    \begin{align}
        &\ \ \ \  a_2(3,N,k)\\
        &= \frac{1}{2} \lrb{ (\Tr T_3)^2 - \Tr T_9 - 3^{k-1} \Tr T_1 } \\ 
        &= \frac{1}{2} \lrb{ (\Tr T_3)^2 - A_{1,9} - \lrp{\Tr T_9 - A_{1,9}} - 3^{k-1} A_{1,1} - 3^{k-1} \lrp{\Tr T_1 - A_{1,1} } } \\
        &= \frac{1}{2} \lrb{- \frac{k-1}{108} \psi(N) 3^k - \frac{k-1}{12} \psi(N) 3^{k-1} + (\Tr T_3)^2 - \lrp{\Tr T_9 - A_{1,9}} - 3^{k-1} \lrp{\Tr T_1 - A_{1,1} } } \\
        &= \frac{\psi(N) 3^k}{2} \lrb{ - \frac{k-1}{27} + \frac{(\Tr T_3)^2}{\psi(N) 3^k } - \lrp{ \frac{\Tr T_9 - A_{1,9}}{\psi(N) 3^k}  } - \frac{1}{3} \lrp{ \frac{\Tr T_1 - A_{1,1}}{\psi(N)}  } } \\
        &= \frac{\psi(N) 3^k}{2} \lrb{- \frac{k-1}{27} + E(N,k) } , \label{eqn:a2-3-final-eqn}
    \end{align}
    where $E(N,k)$ denotes the three error terms above. In particular, from Lemmas \ref{lem:trace-T3-bound}, \ref{lem:trace-T9-bound}, and \ref{lem:trace-T1-bound1},
    \begin{align}
        \lrabs{E(N,k)} &= \lrabs{  \frac{(\Tr T_3)^2}{\psi(N) 3^k } - \lrp{ \frac{\Tr T_9 - A_{1,9}}{\psi(N) 3^k}  } - \frac{1}{3} \lrp{ \frac{\Tr T_1 - A_{1,1}}{\psi(N)} }  }   \\
        &\leq  \frac{448 + 160\sqrt{3}}{27} \theta_2(N)  +  \lrp{\frac{65}{6} \theta_3(N) + \frac{1}{6} \theta_1(N)} + \frac{1}{3} \lrp{\frac{5}{3} \theta_3(N) + \frac{1}{2} \theta_1(N)} \\
        &=  \frac{448 + 160\sqrt{3}}{27} \theta_2(N)  +  \frac{205}{18} \theta_3(N) + \frac{1}{3} \theta_1(N) .  
    \end{align}
    
    For $N \geq 63,\!000,\!000$, we have the bounds  $\theta_1(N) \leq 0.0106$, $\theta_2(N) \leq 0.000314$, and $\theta_3(N) \leq 0.000015$ given in Lemma \ref{lem:omega-over-psi-bounds}. 
    Thus 
    \begin{align}
        \lrabs{E(N,k)} &\leq \frac{448 + 160\sqrt{3}}{27} 0.000314  +  \frac{205}{18} 0.000015 + \frac{1}{3} 0.0106 \\
        &\leq 0.0122 ,
    \end{align}
    which is $< \frac{k-1}{27}$ for all $k \geq 2$. By \eqref{eqn:a2-3-final-eqn}, this shows that $a_2(3,N,k) < 0$ for $N \geq 63,\!000,\!000$ and $k \geq 2$. 

    Utilizing the table in Lemma \ref{lem:omega-over-psi-bounds}, an identical argument using $2,\!700,\!000$, $150,\!000$, $8,\!800$, $571$, $43$, and $1$ as the bounds for $N$, shows that $a_2(3,N,k) < 0$ for $N \geq 2,\!700,\!000, k \geq 4$; $N \geq 150,\!000, k \geq 10$; $N \geq 8,\!800,\ k \geq 34$; $N \geq 571, k \geq 116$; $N \geq 43, k \geq 346$; and $N \geq 1, k \geq 1290$. 

    We then check the finite number of cases left by computer, which yields the complete list given in Table \ref{table:T3-pairs-dim-a2}. See \cite{ross-code} for the code.
\end{proof}

\begin{mytable}
\label{table:T3-pairs-dim-a2}
\begin{equation}
\setlength{\arraycolsep}{2.5mm}
\begin{array}{|c|c|c||c|c|c||c|c|c||c|c|c|} 
 \hline
 \multicolumn{12}{|c|}{\makecell{
    \text{All pairs $(N,k)$ for which $a_2(3,N,k)$ is positive or zero,} \\  
    \text{ along with $\dim S_k(\Gamma_0(N))$ and the actual value of $a_2(3,N,k)$.}  
  }} \\
 \hline
 (N,k) & \dim & a_2 & (N,k) & \dim & a_2 & (N,k) & \dim & a_2 & (N,k) & \dim & a_2 \\
 \hline
 \hline
 (1,2) & 0 & 0 & (8,2) & 0 & 0 & (4,6) & 1 & 0 & (4,8) & 2 & 144 \\
 \hline
 (1,4) & 0 & 0 & (10,2) & 0 & 0 & (5,4) & 1 & 0 & (22,2) & 2 & 1 \\
 \hline
 (1,6) & 0 & 0 & (13,2) & 0 & 0 & (5,6) & 1 & 0 & (28,2) & 2 & 4 \\
 \hline
 (1,8) & 0 & 0 & (16,2) & 0 & 0 & (7,4) & 1 & 0 & (34,2) & 3 & 0 \\
 \hline
 (1,10) & 0 & 0 & (25,2) & 0 & 0 & (8,4) & 1 & 0 & (40,2) & 3 & 4 \\
 \hline
 (1,14) & 0 & 0 & (1,12) & 1 & 0 & (11,2) & 1 & 0 & (64,2) & 3 & 0 \\
 \hline
 (2,2) & 0 & 0 & (1,16) & 1 & 0 & (14,2) & 1 & 0 & (38,2) & 4 & 3 \\
 \hline
 (2,4) & 0 & 0 & (1,18) & 1 & 0 & (17,2) & 1 & 0 & (44,2) & 4 & 0 \\
 \hline
 (2,6) & 0 & 0 & (1,20) & 1 & 0 & (19,2) & 1 & 0 & (56,2) & 5 & 0 \\
 \hline
 (4,2) & 0 & 0 & (1,22) & 1 & 0 & (20,2) & 1 & 0 & (67,2) & 5 & 1 \\
 \hline
 (4,4) & 0 & 0 & (1,26) & 1 & 0 & (32,2) & 1 & 0 & (80,2) & 7 & 0 \\
 \hline
 (5,2) & 0 & 0 & (2,8) & 1 & 0 & (49,2) & 1 & 0 & (140,2) & 19 & 0 \\
 \hline
 (7,2) & 0 & 0 & (2,10) & 1 & 0 & (2,12) & 2 & 63504 & (280,2) & 41 & 0 \\
 \hline
\end{array}
\end{equation}
\end{mytable}

Now, in the case of $m=2$, Clayton et al. \cite[Theorems 1.1 and 1.3]{clayton-et-al} already gave the complete list of $(N,k)$ for which $a_2(2,N,k)$ vanishes. But for completeness, we extend their result slightly and give the complete list of $(N,k)$ for which $a_2(2,N,k)$ is positive or zero. We forgo repeating all of the exact same details for the $m=2$ case, and just give the corresponding bounds that our method would yield.

\begin{proposition}[{c.f. \cite[Theorems 1.1 and 1.3]{clayton-et-al}}]
    Suppose that $\gcd(N,2)=1$ and that $k \geq 2$ is even.
    Then the second coefficient $a_2(2,N,k)$ is positive or zero only for the pairs $(N,k)$ given in Table \ref{table:T2-pairs-dim-a2}.
\end{proposition}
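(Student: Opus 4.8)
The plan is to mirror the argument developed for Theorem \ref{thm:T3}, now with $m=2$. First I would invoke Lemma \ref{lem:a2-coeff-formula} with $\gcd(N,2)=1$, $k$ even, and trivial nebentypus, which gives
\[
a_2(2,N,k) = \frac{1}{2}\lrb{(\Tr T_2)^2 - \Tr T_4 - 2^{k-1}\Tr T_1}.
\]
Since $2$ is not a perfect square, Lemma \ref{lem:main-lemma-not-perfect-square} shows $\Tr T_2 = O(2^{\omega(N)}2^{k/2})$, so $(\Tr T_2)^2$ contributes only to the error; the dominant contribution comes from the perfect-square indices $4$ and $1$, whose leading parts are $A_{1,4}=\frac{k-1}{12}\psi(N)2^{k-2}$ and $A_{1,1}=\frac{k-1}{12}\psi(N)$.

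Next I would establish three bounds analogous to Lemmas \ref{lem:trace-T3-bound}, \ref{lem:trace-T9-bound}, and \ref{lem:trace-T1-bound1}: a bound on $(\Tr T_2)^2/(\psi(N)2^k)$ in terms of $\theta_2(N)$; a bound on $\lrabs{\Tr T_4 - A_{1,4}}/(\psi(N)2^k)$ in terms of $\theta_1(N)$ and $\theta_3(N)$; and the dimension estimate $\lrabs{\Tr T_1 - A_{1,1}}/\psi(N)\leq \frac{5}{3}\theta_3(N)+\frac{1}{2}\theta_1(N)$, which is exactly Lemma \ref{lem:trace-T1-bound1} and requires no new work. Each new bound comes from evaluating the $A_{2,m}$, $A_{3,m}$, and $A_{4,m}$ pieces of the Eichler-Selberg trace formula \eqref{eqn:eichler-selberg-trace-formula} explicitly: for $m=2$ the outer sum in $A_{2,2}$ runs over $t\in\{0,\pm1,\pm2\}$ (involving $h_w(-8),h_w(-7),h_w(-4)$), while for $m=4$ it runs over $t\in\{0,\pm1,\pm2,\pm3\}$ (involving $h_w(-16),h_w(-15),h_w(-12),h_w(-7),h_w(-4),h_w(-3)$), and then applying Lemmas \ref{lem:U-mu-bound} and \ref{lem:sigma-N-m-d-bound} to the resulting $U_{k-1}\mu$ and $\Sigma$ factors.

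Combining these, and separating the leading terms $-A_{1,4}-2^{k-1}A_{1,1}=-\frac{k-1}{4}\psi(N)2^{k-2}$ from the remainders, I would arrive at
\[
a_2(2,N,k) = \frac{\psi(N)2^k}{2}\lrb{-\frac{k-1}{16} + E(N,k)},
\]
where $E(N,k)$ collects the three error contributions and is bounded by a fixed linear combination of $\theta_1(N)$, $\theta_2(N)$, and $\theta_3(N)$. Since each $\theta_i(N)\to0$ by Lemma \ref{lem:omega-over-psi-bounds}, for $N$ beyond an explicit threshold one has $\lrabs{E(N,k)}<\frac{k-1}{16}$ for all $k\geq2$, forcing $a_2(2,N,k)<0$. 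Then, feeding in the tabulated bounds on $\theta_i(N)$ from Lemma \ref{lem:omega-over-psi-bounds}, the same cascading argument (successively shrinking the range of $N$ as $k$ grows) reduces the problem to a finite list of pairs $(N,k)$, which I would check by direct computation to recover Table \ref{table:T2-pairs-dim-a2}.

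The main obstacle is purely computational rather than conceptual: the bookkeeping in evaluating $A_{2,2}$ and especially $A_{2,4}$ to pin down the exact numerical constants in the three bounds, and ensuring that the final computer verification over the remaining finite set of $(N,k)$ is genuinely exhaustive. Everything else is a direct transcription of the $m=3$ template, which is why only the resulting bounds need to be recorded.
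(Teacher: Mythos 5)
Your proposal follows the paper's own proof essentially verbatim: the same decomposition via Lemma \ref{lem:a2-coeff-formula}, the same three trace bounds modeled on Lemmas \ref{lem:trace-T3-bound}, \ref{lem:trace-T9-bound}, and \ref{lem:trace-T1-bound1}, the same leading term $-\frac{k-1}{16}$ inside the factor $\frac{\psi(N)2^k}{2}$, and the same cascading thresholds from Lemma \ref{lem:omega-over-psi-bounds} followed by a finite computer check. The discriminants you list for $A_{2,2}$ and $A_{2,4}$ are the correct ones, so the only work you have deferred is the arithmetic producing the explicit constants, exactly as the paper itself does.
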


\begin{proof} 
    From Lemma \ref{lem:a2-coeff-formula}, we have
    \begin{equation} \label{eqn:a2-coeff-T2}
        a_2(2,N,k) = \frac{1}{2} \lrb{ (\Tr T_2)^2 - \Tr T_4 - 2^{k-1} \Tr T_1 }.
    \end{equation}
    Then by an identical method as in Lemmas \ref{lem:trace-T3-bound}, \ref{lem:trace-T9-bound}, and \ref{lem:trace-T1-bound1}, we obtain the bounds
    \begin{equation}
    \begin{alignedat}{6} 
        \frac{(\Tr T_2)^2}{\psi(N) 2^k } 
        &\leq  \frac{41 + 24\sqrt{2}}{4} \theta_2(N), &&&& \\
        \lrabs{\frac{\Tr T_4 - A_{1,4}}{\psi(N) 2^k}} 
        &\leq \frac{17}{2} \theta_3(N) + \frac{1}{4} \theta_1(N), && \qquad \text{where} \qquad && A_{1,4} = \frac{k-1}{48} \psi(N) 2^k , \\
        \lrabs{\frac{\Tr T_1 - A_{1,1}}{\psi(N)}} 
        &\leq \frac{5}{3} \theta_3(N) + \frac{1}{2} \theta_1(N), && \qquad \text{where} \qquad  && A_{1,1} = \frac{k-1}{12} \psi(N) .
    \end{alignedat}
    \end{equation}

    As done in Theorem \ref{thm:T3}, combining these three bounds gives    
    \begin{align}
         a_2(2,N,k) &= \frac{\psi(N) 2^k}{2} \lrb{- \frac{k-1}{16} + E(N,k) } ,
    \end{align}
    where 
    \begin{align}
        \lrabs{E(N,k)} &\leq  \frac{41 + 24\sqrt{2}}{4} \theta_2(N)  +  \frac{28}{3} \theta_3(N) + \frac{1}{2} \theta_1(N) . 
    \end{align}
    This yields $a_2(2,N,k) <0$ for $N \geq 2,\!700,\!000, k \geq 2$; $N \geq 150,\!000, k \geq 6$; $N \geq 8,\!800,\ k \geq 16$; $N \geq 571, k \geq 50$; $N \geq 43, k \geq 148$; and $N \geq 1, k \geq 562$. 

    We then check the finite number of cases left by computer, which yields the complete list given in Table \ref{table:T2-pairs-dim-a2}. See \cite{ross-code} for the code.
\end{proof}

\begin{mytable}
\label{table:T2-pairs-dim-a2}
\begin{equation}
\setlength{\arraycolsep}{2.5mm}
\begin{array}{|c|c|c||c|c|c||c|c|c||c|c|c|} 
 \hline
 \multicolumn{12}{|c|}{\makecell{
    \text{All pairs $(N,k)$ for which $a_2(2,N,k)$ is positive or zero,} \\  
    \text{ along with $\dim S_k(\Gamma_0(N))$ and the actual value of $a_2(2,N,k)$.}  
  }} \\
 \hline
 (N,k) & \dim & a_2 & (N,k) & \dim & a_2 & (N,k) & \dim & a_2 & (N,k) & \dim & a_2 \\
 \hline
 \hline
 (1,2) & 0 & 0 & (7,2) & 0 & 0 & (1,26) & 1 & 0 & (17,2) & 1 & 0 \\
 \hline
 (1,4) & 0 & 0 & (9,2) & 0 & 0 & (3,6) & 1 & 0 & (19,2) & 1 & 0 \\
 \hline
  (1,6) & 0 & 0 & (13,2) & 0 & 0 & (3,8) & 1 & 0 & (21,2) & 1 & 0 \\
 \hline
 (1,8) & 0 & 0 & (25,2) & 0 & 0 & (5,4) & 1 & 0 & (27,2) & 1 & 0 \\
 \hline
  (1,10) & 0 & 0 & (1,12) & 1 & 0 & (5,6) & 1 & 0 & (49,2) & 1 & 0 \\
 \hline
 (1,14) & 0 & 0 & (1,16) & 1 & 0 & (7,4) & 1 & 0 & (37,2) & 2 & 0 \\
 \hline
  (3,2) & 0 & 0 & (1,18) & 1 & 0 & (9,4) & 1 & 0 & (33,2) & 3 & 0 \\
 \hline
 (3,4) & 0 & 0 & (1,20) & 1 & 0 & (11,2) & 1 & 0 & (57,2) & 5 & 0 \\
 \hline
  (5,2) & 0 & 0 & (1,22) & 1 & 0 & (15,2) & 1 & 0 & \multicolumn{3}{c|}{~} \\
 \hline
\end{array}
\end{equation}
\end{mytable}

\section{Proof of Theorem \ref{thm:T4} } \label{sec:proofofT4}

In this section, we show Theorem \ref{thm:T4}: for $N$ coprime to $4$ and $k \geq 2$ even, $a_2(4,N,k)$ is negative or zero only for the pairs $(N,k)$ given in \cite[Table $m=4$]{ross-code}.

Observe that in the notation of Lemma \ref{lem:a2-coeff-formula}, for $\gcd(N,4)=1$ and $k \geq 2$ even, we have
\begin{equation} \label{eqn:a2-coeff-T4}
    a_2(4,N,k) = \frac{1}{2} \lrb{ (\Tr T_4)^2 - \Tr T_{16} - 2^{k-1} \Tr T_4 - 4^{k-1} \Tr T_1 }.
\end{equation}

We bound the four terms of \eqref{eqn:a2-coeff-T4} separately in Lemmas \ref{lem:trace-T4-bound1} - \ref{lem:trace-T1-bound2}.

\begin{lemma} \label{lem:trace-T4-bound1}
    Let $N \geq 1$ with $\gcd(N,4)=1$ and $k \geq 2$ be even such that $\frac{k-1}{48} \geq 7 \theta_3(N) + \frac14 \theta_1(N) $. 
    Then 
    \begin{align*}
        \frac{(\Tr T_4)^2}{(k-1) 4^k \psi(N)^2 } 
        &\geq \frac{k-1}{2304} - \frac{7}{24} \theta_3(N) - \frac{1}{96} \theta_1(N).
    \end{align*}
\end{lemma}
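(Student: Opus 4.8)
The plan is to apply the Eichler--Selberg trace formula \eqref{eqn:eichler-selberg-trace-formula} to $\Tr T_4 = \Tr T_4(N,k)$, isolate the positive main term $A_{1,4}$, bound the remaining three terms, and thereby produce a genuine \emph{lower} bound on $\Tr T_4$ itself (not merely on $|\Tr T_4|$). Since $\gcd(N,4)=1$ forces $\chi_0(\sqrt 4)=\chi_0(2)=1$, equation \eqref{eqn:A1m-formula} gives $A_{1,4} = \frac{k-1}{12}\psi(N)4^{k/2-1} = \frac{k-1}{48}\psi(N)2^k$, of the expected order $k\,\psi(N)2^k$.

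To bound $A_{2,4}$, I would expand \eqref{eqn:A2m-formula} over the finitely many admissible pairs $(t,n)$ with $t^2<16$: here $t\in\{0,\pm1,\pm2,\pm3\}$, with $n\in\{1,2\}$ contributing for $t=0$ and $t=2$, and $n=1$ for $t=1,3$. Reading off the weighted class numbers $h_w(-16),h_w(-4),h_w(-15),h_w(-12),h_w(-3),h_w(-7)$ from Table \ref{table:weighted-class-numbers} and invoking Lemma \ref{lem:U-mu-bound} with $\psi(1)=1$, $\psi(2)=3$, the weighted sum of coefficients against $\psi(n)$ collapses to an explicit constant, yielding $|A_{2,4}| \leq \frac{25}{4}\,2^{\omega(N)}2^k$. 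For $A_{3,4}$ I would apply Lemma \ref{lem:sigma-N-m-d-bound} divisor by divisor: $d=1$ has $d-m/d=-3\neq0$, so $|\Sigma(N,4,1)|\leq 3\cdot 2^{\omega(N)}$, while $d=2=\sqrt4$ falls under the general case, so $|\Sigma(N,4,2)|\leq\sqrt N\,2^{\omega(N)}$, giving $A_{3,4}\leq \frac14\,2^k\sqrt N\,2^{\omega(N)}+3\cdot 2^{\omega(N)}$.

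The decisive point is the sign of $A_{4,4}$: it is always nonnegative (either $0$ or $\sum_{c\mid4}c$), so in bounding $\Tr T_4 = A_{1,4}-A_{2,4}-A_{3,4}+A_{4,4}$ from below I would simply discard it, obtaining $\Tr T_4\geq A_{1,4}-|A_{2,4}|-A_{3,4}$. Dividing through by $\psi(N)2^k$, using $2^k\geq4$ to fold the residual $2^{\omega(N)}$ terms into $\theta_3(N)=2^{\omega(N)}/\psi(N)$ and recognizing $\theta_1(N)=\sqrt N\,2^{\omega(N)}/\psi(N)$, the $\theta_3$ coefficients $\frac{25}{4}$ and $\frac34$ combine to $7$, leaving
\[
\frac{\Tr T_4}{\psi(N)2^k}\;\geq\;\frac{k-1}{48}-7\theta_3(N)-\tfrac14\theta_1(N).
\]
I expect this to be the one delicate spot: the constant must come out to exactly $7$, not $8$, and this relies on discarding $A_{4,4}$ rather than bounding $|A_{3,4}-A_{4,4}|$, so I would track signs rather than absolute values throughout.

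Finally, the hypothesis $\frac{k-1}{48}\geq 7\theta_3(N)+\tfrac14\theta_1(N)$ makes the right-hand side above nonnegative, so both sides lie in $[0,\infty)$ and may be squared. Squaring, dividing by $k-1$, and using $4^k=(2^k)^2$ converts the displayed inequality into a lower bound on $\frac{(\Tr T_4)^2}{(k-1)4^k\psi(N)^2}$; expanding $\big(\frac{k-1}{48}-7\theta_3-\tfrac14\theta_1\big)^2$ and discarding the nonnegative square term in the $\theta_i$ leaves exactly $\frac{k-1}{2304}-\frac{7}{24}\theta_3(N)-\frac{1}{96}\theta_1(N)$, the claimed bound. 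Thus the only real obstacles are the careful $A_{2,4}$ enumeration and keeping the inequality signed so that squaring under the hypothesis loses nothing.
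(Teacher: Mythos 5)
Your proposal is correct and follows essentially the same route as the paper: expand $A_{2,4}$ over the admissible pairs $(t,n)$ to get the constant $\tfrac{25}{4}$, bound $A_{3,4}$ divisor by divisor via Lemma \ref{lem:sigma-N-m-d-bound}, discard $A_{4,4}\geq 0$ to keep a signed lower bound, absorb $3\cdot 2^{\omega(N)}\leq \tfrac34 2^k 2^{\omega(N)}$ so the $\theta_3$ coefficient is exactly $7$, and square under the hypothesis while dropping the nonnegative cross-square term. No gaps.
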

\begin{proof}
We examine each of the $A_{i,4}$ terms from \eqref{eqn:eichler-selberg-trace-formula} separately.

First, by \eqref{eqn:A1m-formula},
$$A_{1,4} = \chi_0(\sqrt{4}) \frac{k-1}{12} \psi (N) 4^{k/2-1} = \frac{k-1}{48} \psi (N) 2^k .$$

Second, summing over $t$ with $t^2<16$ and positive $n$ such that $n^2\mid (t^2-16)$ and $(t^2-16)/n^2\equiv 0,1\pmod{4}$, we get
\begin{align}
    A_{2,4} = &\frac{1}{2} \sum_{t^2 < 16} U_{k-1}(t,4) \sum_{n}  h_w \lrp{ \frac{t^2-16}{n^2} } \mu (t,n,4) \\
    = &\frac{1}{2} U_{k-1}(0,4) [ \mu(0,1,4) + \frac12 \mu(0,2,4) ] \\
    &+ U_{k-1}(1,4) \lrb{ 2 \mu (1,1,4) } \\
    &+ U_{k-1}(2,4) [ \mu(2,1,4) + \frac13 \mu(2,2,4) ] \\
    &+ U_{k-1}(3,4) \lrb{ \mu(3,1,4) }. 
\end{align}
So by Lemma \ref{lem:U-mu-bound},
\begin{align}
    \lrabs{A_{2,4}} &\leq  \, 2^{\omega(N)} 4^{(k-1)/2}\cdot 2 \lrb{\frac12 \lrp{1 + \frac{1}{2} \cdot 3} + (2) + \lrp{1+\frac{1}{3} \cdot 3} + \lrp{ 1 } } \\
    &= \frac{25}{4} \cdot 2^{k} \, 2^{\omega(N)} .
\end{align}

Third, by \eqref{eqn:A3m-formula} (recalling that the $d=d_0$ and $d=m/d_0$ terms coincide) and Lemma \ref{lem:sigma-N-m-d-bound}, 
\begin{align}
    A_{3,4} &= \frac{1}{2} \sum_{d|4} \min(d,4/d)^{k-1} \Sigma(N,4,d) \\
    &= \Sigma(N,4,1) + \frac12 \cdot 2^{k-1} \Sigma(N,4,2)  \\
    &\leq  |-3| \cdot 2^{\omega(N)} + \frac12 \cdot 2^{k-1} \sqrt{N} 2^{\omega(N)}\\
    &= 3\cdot 2^{\omega(N)} + \frac14\cdot  2^k \sqrt{N} 2^{\omega(N)}.
\end{align}


Putting this all together, and using the fact that $A_{4,16}\ge0$, we obtain 
\begin{align} 
    \Tr T_4 &= A_{1,4} - A_{2,4} - A_{3,4} + A_{4,4}  \\
    &\geq A_{1,4} - |A_{2,4}| - A_{3,4} \\
    &\geq \frac{k-1}{48} \psi(N) 2^k - \frac{25}{4}\cdot  2^k \,2^{\omega(N)} - 3 \cdot 2^{\omega(N)} - \frac14 \cdot 2^k \sqrt{N} 2^{\omega(N)} \\
    &\geq \frac{k-1}{48} \psi(N) 2^k - \frac{25}{4}\cdot  2^k \, 2^{\omega(N)} - \frac34\cdot  2^k \, 2^{\omega(N)} - \frac14 \cdot 2^k \sqrt{N} 2^{\omega(N)} \\
    &= \psi(N) 2^k \lrp{ \frac{k-1}{48} - 7 \theta_3(N) - \frac14 \theta_1(N) } .
\end{align}
Thus if $\frac{k-1}{48} \geq  7 \theta_3(N) + \frac14 \theta_1(N)$, then we have
\begin{align}
    \frac{(\Tr T_4)^2}{(k-1) 4^k \, \psi(N)^2} 
    &\geq   \frac{  \lrp{ \frac{k-1}{48} - 7 \theta_3(N) - \frac14 \theta_1(N) }^2  }{k-1} \\
    &\geq   \frac{  \lrp{ \frac{k-1}{48}}^2 - 2 \frac{k-1}{48} \lrp{ 7 \theta_3(N) + \frac14 \theta_1(N) }  }{k-1} \\
    &=   \frac{k-1}{2304} - \frac{7}{24} \theta_3(N) - \frac{1}{96} \theta_1(N),   \\
\end{align}
as desired.
\end{proof}

Next, we bound the $\Tr T_{16}$ term from \eqref{eqn:a2-coeff-T4}.

\begin{lemma} \label{lem:trace-T16-bound}
Let $N \geq 1$ with $\gcd(N,4)=1$ and $k \geq 2$ be even. Then
\begin{align}  
    \frac{\Tr T_{16}}{(k-1) 4^k \, \psi(N)^2}
    &\leq \frac{4309}{192N} .
\end{align}

\end{lemma}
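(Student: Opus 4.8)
The plan is to expand $\Tr T_{16}$ by the Eichler-Selberg trace formula \eqref{eqn:eichler-selberg-trace-formula} as $A_{1,16} - A_{2,16} - A_{3,16} + A_{4,16}$ and to show that, after dividing through by $(k-1)4^k\psi(N)^2$, each of the four pieces is $O(1/N)$ with an explicitly computable constant. Since $16$ is a perfect square the main term survives: by \eqref{eqn:A1m-formula}, using that $\gcd(N,4)=1$ forces $\chi_0(\sqrt{16})=\chi_0(4)=1$, one gets $A_{1,16} = \frac{k-1}{12}\psi(N)16^{k/2-1} = \frac{k-1}{192}\psi(N)4^k$, which contributes exactly $\frac{1}{192\psi(N)} \leq \frac{1}{192N}$ to the normalized quantity via $\psi(N)\geq N$. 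Although $A_{1,16}$ dominates $\Tr T_{16}$ itself, the division by $\psi(N)^2$ (rather than $\psi(N)$) renders even this term of size $1/N$; this is precisely the normalization needed to compare $\Tr T_{16}$ against $(\Tr T_4)^2$ in \eqref{eqn:a2-coeff-T4} through Lemma \ref{lem:trace-T4-bound1}.

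Next I would bound the remaining three terms, following Lemmas \ref{lem:trace-T9-bound} and \ref{lem:trace-T1-bound1}. For $A_{2,16}$, which furnishes the dominant part of the constant, I enumerate all $t$ with $t^2 < 64$ (that is $t=0,1,\dots,7$, with $\pm t$ coinciding) together with the admissible $n$ (those with $n^2 \mid (t^2-64)$ and $(t^2-64)/n^2 \equiv 0,1 \pmod 4$), read off the weighted class numbers $h_w$ from Table \ref{table:weighted-class-numbers}, and apply Lemma \ref{lem:U-mu-bound} to each summand; this yields a bound of the shape $\lrabs{A_{2,16}} \leq C_2\, 2^{\omega(N)} 4^k$. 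For $A_{3,16}$, I use \eqref{eqn:A3m-formula} and Lemma \ref{lem:sigma-N-m-d-bound}, pairing the divisors of $16$ as $(1,16),(2,8),(4,4)$ and treating the self-paired divisor $d=4=\sqrt{16}$ with the general bound $\sqrt{N}\,2^{\omega(N)}$ while bounding the $d=1,2$ terms by $\lrabs{d-16/d}\,2^{\omega(N)}$; the dominant contribution is the $\tfrac12 4^{k-1}\sqrt N\,2^{\omega(N)}$ coming from $d=4$. Finally $A_{4,16}$ equals $\sum_{c\mid 16} c = 31$ when $k=2$ (since $\gcd(N,4)=1$ gives $\gcd(N,16)=1$, so every divisor condition is automatically met) and $0$ otherwise, by \eqref{eqn:A4m-formula}.

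I then combine these into the upper estimate $\Tr T_{16} \leq A_{1,16} + \lrabs{A_{2,16}} + \lrabs{A_{3,16} - A_{4,16}}$, bounding $\lrabs{A_{3,16} - A_{4,16}}$ by a single expression dominating both $A_{3,16}$ and $A_{4,16}$ exactly as in Lemma \ref{lem:trace-T9-bound}, where the $k=2$, small-$N$ value of $A_{4,16}$ is absorbed into the $\sqrt N$ and constant parts of the $A_{3,16}$ bound. Dividing by $(k-1)4^k\psi(N)^2$ and converting each piece to a multiple of $1/N$, I would use $\psi(N)\geq N$, the inequalities $\theta_1(N),\theta_3(N)\leq 1$ from Lemma \ref{lem:omega-over-psi-bounds} (so that $\frac{2^{\omega(N)}}{\psi(N)^2}\leq \frac1N$ and $\frac{\sqrt N\,2^{\omega(N)}}{\psi(N)^2}\leq\frac1N$), and $k\geq 2$ (so that the factors $\frac{2^{k-1}}{4^k}\leq\frac18$, $\frac{4^{k-1}}{4^k}=\frac14$, and $\frac{1}{k-1}\leq 1$ are all controlled). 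Summing the resulting constants produces the stated $\frac{4309}{192N}$. The main obstacle is purely bookkeeping: keeping the $A_{2,16}$ enumeration (the largest contributor) and the $k=2$ treatment of $A_{4,16}$ accurate enough to land on the exact constant $4309/192$ rather than a generic $O(1/N)$ bound, since a small arithmetic slip in either would propagate directly into the final constant—presumably why this computation is cross-checked by the code in \cite{ross-code}.
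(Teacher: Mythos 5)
Your plan is essentially the paper's: expand $\Tr T_{16}$ via \eqref{eqn:eichler-selberg-trace-formula}, keep $A_{1,16}=\frac{k-1}{192}\psi(N)4^k$, bound $A_{2,16}$ term by term with Lemma \ref{lem:U-mu-bound}, note $A_{4,16}\leq 31$, and convert everything to multiples of $\frac{1}{N}$ using $2^{\omega(N)}\leq\psi(N)$, $\sqrt{N}\,2^{\omega(N)}\leq\psi(N)$, $\psi(N)\geq N$, and $k\geq 2$. The one genuine divergence is your treatment of $A_{3,16}$: since only an \emph{upper} bound on $\Tr T_{16}$ is needed and $A_{3,16}$ enters with a minus sign, the paper simply discards it via $A_{3,16}\geq 0$ and writes $\Tr T_{16}\leq A_{1,16}+\lrabs{A_{2,16}}+A_{4,16}$; its constant is exactly $\frac{1}{192}+\frac{41}{2}+\frac{31}{16}=\frac{4309}{192}$, with $\frac{41}{2}$ coming from the $A_{2,16}$ enumeration. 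You instead compute a full bound on $A_{3,16}$ (including the $\frac12\,4^{k-1}\sqrt{N}\,2^{\omega(N)}$ piece from $d=4$) and fold it into $\lrabs{A_{3,16}-A_{4,16}}$ as in Lemma \ref{lem:trace-T9-bound}. That is still a valid upper bound and, if carried out, yields roughly $\frac{1}{192}+\frac{41}{2}+\frac{29}{16}=\frac{4285}{192}$, which is marginally sharper and still implies the lemma — but your claim that this bookkeeping ``produces the stated $\frac{4309}{192N}$'' is not right: that constant arises only from the paper's decomposition, where $A_{3,16}$ is dropped and $A_{4,16}\leq 31$ is kept separately. The $\Tr T_9$ lemma needs the two-sided combination because it bounds $\lrabs{\Tr T_9 - A_{1,9}}$; here the one-sided bound makes that machinery unnecessary. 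You should also actually carry out the $A_{2,16}$ enumeration (the $t=0,\dots,7$ terms with their admissible $n$ and class numbers) to get the constant $\frac{41}{2}$, since the entire content of the lemma is the explicit constant rather than an $O(1/N)$ statement.
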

\begin{proof}
We examine each of the $A_{i,16}$ terms from \eqref{eqn:eichler-selberg-trace-formula} separately.

First, by \eqref{eqn:A1m-formula},
\begin{align}
 A_{1,16} &= \chi_0(\sqrt{16}) \frac{k-1}{12} \psi (N) 16^{k/2-1} = \frac{k-1}{192} \psi (N) 4^k .
\end{align}

Second, summing over $t$ with $t^2<64$ and positive $n$ such that $n^2\mid (t^2-64)$ and $(t^2-64)/n^2\equiv 0,1\pmod{4}$, we get
\begin{align}
    A_{2,16} = &\frac{1}{2} \sum_{t^2 < 64} U_{k-1}(t,16) \sum_{n}  h_w \lrp{ \frac{t^2-64}{n^2} } \mu (t,n,16) \\
     = &\frac{1}{2} U_{k-1}(0,16) [ 2 \mu(0,1,16) + \mu(0,2,16) + \frac12 \mu(0,4,16) ]   \\
     &+U_{k-1}(1,16) \lrb{ 4 \mu(1,1,16) +  \mu(1,3,16) } \\
     &+U_{k-1}(2,16) \lrb{ 2 \mu(2,1,16) + 2 \mu(2,2,16) } \\
     &+U_{k-1}(3,16) \lrb{ 4 \mu(3,1,16) } \\
     &+U_{k-1}(4,16) [ 2 \mu(4,1,16) + \mu(4,2,16) + \frac13 \mu(4,4,16) ] \\
     &+U_{k-1}(5,16) \lrb{ 4 \mu(5,1,16)} \\
     &+U_{k-1}(6,16) \lrb{ \mu(6,1,16) + \mu(6,2,16) } \\
     &+U_{k-1}(7,16) \lrb{ 2 \mu(7,1,16) } .
\end{align}
Thus,  by Lemma \ref{lem:U-mu-bound},
\begin{align}
     |A_{2,16}| \leq & \, 2^{\omega(N)} 16^{(k-1)/2} \cdot 2 \Bigg[ \frac12 \lrp{2 + 3 + \frac12 \cdot 6} + (4 + 4) + (2 + 2\cdot 3) \\
     & \qquad\qquad\qquad\qquad + (4) + \lrp{2 + 3 + \frac13 \cdot 6} + (4) + \lrp{1+3} + (2) \Bigg] \\
     =&  \frac{41}{2}\cdot 2^{\omega(N)} 4^k .
\end{align}

Third, by \eqref{eqn:A4m-formula},
\begin{align}
   A_{4,16}  &\leq \sum_{c | 16} c \ =  1+2+4+8+16=31 .
\end{align}

Putting this all together, and using the fact that $A_{3,16}\ge0$, we obtain
\begin{align}
    \Tr T_{16}
    &= A_{1,16} - A_{2,16} - A_{3,16} + A_{4,16} \\
    &\leq  A_{1,16} + \lrabs{A_{2,16}} + A_{4,16} \\
    &\leq \frac{k-1}{192} \psi(N) 4^k + \frac{41}{2} \cdot2^{\omega(N)} 4^k + 31 \\
    &\leq \frac{k-1}{192} \psi(N) 4^k + \frac{41}{2} \psi(N) 4^k (k-1) + \frac{31}{16} \psi(N) 4^k (k-1) \\
    &=  \frac{4309}{192}\cdot \psi(N) 4^k (k-1).
\end{align}
This yields
\begin{align}
    \frac{\Tr T_{16}}{(k-1) 4^k \psi(N)^2} 
    \leq \frac{4309}{192 \psi(N)} 
    \leq \frac{4309}{192N},
\end{align}
as desired.
\end{proof}

\begin{lemma} \label{lem:trace-T4-bound2}
    Let $N \geq 1$ with $\gcd(N,4)=1$ and $k \geq 2$ be even. 
    Then 
    \begin{align*}
        \frac{\Tr T_4}{(k-1) 2^k \psi(N)^2 } 
        &\leq \frac{385}{48N}.
    \end{align*}
\end{lemma}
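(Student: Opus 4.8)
The plan is to bound $\Tr T_4$ from above via the Eichler-Selberg trace formula \eqref{eqn:eichler-selberg-trace-formula}, following the same template as the proof of Lemma \ref{lem:trace-T16-bound}. Writing $\Tr T_4 = A_{1,4} - A_{2,4} - A_{3,4} + A_{4,4}$, I can reuse the term-by-term computations already carried out in the proof of Lemma \ref{lem:trace-T4-bound1}: namely $A_{1,4} = \frac{k-1}{48}\psi(N)2^k$ and $\lrabs{A_{2,4}} \leq \frac{25}{4} 2^k 2^{\omega(N)}$, together with the elementary bound $A_{4,4} \leq \sum_{c \mid 4} c = 7$ coming from \eqref{eqn:A4m-formula} (the coprimality condition $\gcd(N,4)=1$ makes every $c \mid 4$ admissible when $k=2$, and $A_{4,4}=0$ otherwise).

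The key structural observation is how to treat $A_{3,4}$. Its bound from Lemma \ref{lem:sigma-N-m-d-bound} carries a factor of $\sqrt{N}$, arising from the self-paired divisor $d = 2 = \sqrt{4}$, which is far too large to fit inside an estimate of order $1/N$. Since we only want an upper bound on $\Tr T_4$, I would instead simply discard the $-A_{3,4}$ term using the fact that $A_{3,4} \geq 0$: for the trivial character, every summand $\min(d,m/d)^{k-1}\,\phi(\gcd(\tau,N/\tau))\,\chi_0(y_\tau)$ is nonnegative. This gives $\Tr T_4 \leq A_{1,4} + \lrabs{A_{2,4}} + A_{4,4}$, with no $\sqrt{N}$ term surviving.

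It then remains to normalize each of the three surviving terms by $(k-1)2^k\psi(N)^2$ and show their sum is at most $\frac{385}{48N}$. Using $\psi(N) \geq N$ (so that $\psi(N) \leq \psi(N)^2/N$), the inequality $2^{\omega(N)} \leq \psi(N)$, and the trivial bounds $k-1 \geq 1$ and $2^k \geq 4$, one checks that the three terms contribute at most $\frac{1}{48N}$, $\frac{25}{4N}$, and $\frac{7}{4N}$ respectively; for the last of these the constant $7$ is absorbed using $(k-1)2^k\psi(N)^2/N \geq 4$. Summing yields $\frac{1}{48} + \frac{25}{4} + \frac{7}{4} = \frac{385}{48}$, as desired. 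This final normalization is entirely routine bookkeeping, so the only point requiring genuine care is the decision, described above, to drop $A_{3,4}$ by nonnegativity rather than bound it by its absolute value.
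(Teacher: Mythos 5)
Your proposal is correct and follows essentially the same route as the paper: it reuses the bounds $A_{1,4} = \frac{k-1}{48}\psi(N)2^k$ and $\lrabs{A_{2,4}} \leq \frac{25}{4}2^k 2^{\omega(N)}$ from the proof of Lemma \ref{lem:trace-T4-bound1}, bounds $A_{4,4} \leq 7$, drops $-A_{3,4}$ by nonnegativity (the key point, exactly as the paper does), and then normalizes using $2^{\omega(N)} \leq \psi(N)$, $k-1 \geq 1$, $2^k \geq 4$, and $\psi(N) \geq N$ to arrive at $\frac{1}{48} + \frac{25}{4} + \frac{7}{4} = \frac{385}{48}$. No gaps.
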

\begin{proof}
We examine each of the $A_{i,4}$ terms from \eqref{eqn:eichler-selberg-trace-formula} separately.

From the computations in the proof of Lemma \ref{lem:trace-T4-bound1}, we have
\begin{align} A_{1,4} = \frac{k-1}{48} \psi (N) 2^k \qquad
\text{and}\qquad 
\lrabs{A_{2,4}} &\leq  \frac{25}{4} 2^{k} \, 2^{\omega(N)} .
\end{align}

Also, by \eqref{eqn:A4m-formula},
\begin{align}
   A_{4,4}  &\leq \sum_{c\mid 4} c \ =  1+2+4=7. 
\end{align}

Putting all this together, and noticing that $A_{3,4}\ge0$, we obtain 
\begin{align} 
    \Tr T_4 &= A_{1,4} - A_{2,4} - A_{3,4} + A_{4,4}  \\
    &\leq A_{1,4} + |A_{2,4}| + A_{4,4} \\
    &\leq \frac{k-1}{48} \psi(N) 2^k + \frac{25}{4} \cdot 2^k \,2^{\omega(N)} + 7 \\
    &\leq \frac{k-1}{48} \psi(N) 2^k + \frac{25}{4}\cdot 2^k \psi(N)(k-1) + \frac{7}{4}\cdot 2^k \psi(N)(k-1) \\
    &=\frac{385}{48}\cdot  \psi(N) 2^k (k-1) .
\end{align}
This yields
\begin{align}
    \frac{\Tr T_4}{(k-1) 2^k \, \psi(N)^2} 
    \leq   \frac{  385  }{48 \psi(N)} 
    \leq   \frac{385}{48N} ,
\end{align}
as desired.
\end{proof}

\begin{lemma} \label{lem:trace-T1-bound2}
    Let $N \geq 1$ and $k \geq 2$ be even. Then
    $$\frac{\Tr T_1}{(k-1)\psi(N)^2} \leq \frac{9}{4N}. $$
\end{lemma}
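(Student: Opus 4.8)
The plan is to follow exactly the same template used in Lemmas \ref{lem:trace-T9-bound}, \ref{lem:trace-T1-bound1}, \ref{lem:trace-T16-bound}, and \ref{lem:trace-T4-bound2}: examine the four terms $A_{i,1}$ in the Eichler-Selberg trace formula \eqref{eqn:eichler-selberg-trace-formula} for $m=1$, bound each one, and combine. Since $\Tr T_1$ is nonnegative (it is the dimension of $S_k(\Gamma_0(N))$), I only need an upper bound. The key simplification here is that $m=1$ forces $t^2 < 4$, so the $A_{2,1}$ sum runs over only $t=0,\pm1$, and the divisor sums in $A_{3,1}$ and $A_{4,1}$ are trivial since $1$ has only the divisor $d=1$. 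These are precisely the $A_{i,1}$ computations already carried out verbatim in the proof of Lemma \ref{lem:trace-T1-bound1}.

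First I would record the main term $A_{1,1} = \frac{k-1}{12}\psi(N)$ from \eqref{eqn:A1m-formula}. Next, reusing the $A_{2,1}$ calculation from Lemma \ref{lem:trace-T1-bound1}, I have $|A_{2,1}| \leq \frac{7}{6} \cdot 2^{\omega(N)}$ via Lemma \ref{lem:U-mu-bound}. For $A_{3,1}$, the only divisor is $d=1$, so $A_{3,1} = \frac{1}{2}\Sigma(N,1,1)$; since $d = 1 = \sqrt{1}$, I must use the general case of Lemma \ref{lem:sigma-N-m-d-bound}, giving $A_{3,1} \leq \frac{1}{2}\sqrt{N}\,2^{\omega(N)}$. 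Finally $A_{4,1} \leq \sum_{c\mid 1} c = 1$. Bounding $\Tr T_1 = A_{1,1} - A_{2,1} - A_{3,1} + A_{4,1} \leq A_{1,1} + |A_{2,1}| + A_{4,1}$ (dropping the nonnegative $A_{3,1}$, which only helps an upper bound) collects everything into a multiple of $(k-1)\psi(N)$ plus lower-order $2^{\omega(N)}$ terms.

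The final step is to divide by $(k-1)\psi(N)^2$ and absorb all the error terms into the single clean bound $\frac{9}{4N}$. The mild subtlety is that the target here is expressed purely in terms of $1/N$ rather than the $\theta_i(N)$ quantities: I would convert using $\psi(N) \geq N$ together with the inequality $2^{\omega(N)} \leq \psi(N)(k-1)$ (valid since $k-1 \geq 1$ and $2^{\omega(N)} \leq \psi(N)$) to fold the $2^{\omega(N)}$ contributions from $A_{2,1}$ and $A_{4,1}$ into the $(k-1)\psi(N)$ scale, then use $\psi(N) \geq N$ once more. I expect no genuine obstacle: this is the easiest of the four bounds for the $m=4$ case, since the index $1$ makes every sum trivially short, and it is essentially a restatement of the dimension estimate already obtained in Lemma \ref{lem:trace-T1-bound1} repackaged in the $1/N$ normalization needed for \eqref{eqn:a2-coeff-T4}. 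The only bookkeeping care is to verify the numerical constant $\frac{9}{4}$ emerges after combining $\frac{1}{12} + \frac{7}{6} + \frac{1}{2} + 1$-type contributions under these conversions.
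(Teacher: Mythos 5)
Your proposal matches the paper's proof essentially verbatim: drop the nonnegative $A_{3,1}$, reuse the $A_{1,1}$, $A_{2,1}$, $A_{4,1}$ bounds from Lemma \ref{lem:trace-T1-bound1}, absorb the lower-order terms via $2^{\omega(N)} \leq \psi(N) \leq \psi(N)(k-1)$ and $1 \leq \psi(N)(k-1)$, and divide through using $\psi(N) \geq N$, giving the constant $\frac{1}{12} + \frac{7}{6} + 1 = \frac{9}{4}$. One small note: the ``$\frac{1}{12} + \frac{7}{6} + \frac{1}{2} + 1$'' in your last sentence should not include the $\frac{1}{2}$ (that would give $\frac{11}{4}$); since you discard $A_{3,1}$ entirely, its contribution never enters the sum.
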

\begin{proof}
    We examine each of the $A_{i,1}$ terms from \eqref{eqn:eichler-selberg-trace-formula} separately.
    
    By the calculations in the proof of Lemma \ref{lem:trace-T1-bound1}, we have  
    \begin{align}
        A_{1,1} = \frac{k-1}{12} \psi (N), \qquad
        \lrabs{A_{2,1}} \leq \frac{7}{6} 2^{\omega(N)} ,\qquad  
    \text{and} \qquad
    A_{4,1} \leq   1 . \end{align}

    Putting all this together, and noticing that $A_{3,1}\ge0$, we obtain
    \begin{align}
        \Tr T_1 &= A_{1,1} - A_{2,1} - A_{3,1} + A_{4,1} \\
        &\leq  A_{1,1}  + \lrabs{A_{2,1}} + A_{4,1}  \\
        &\leq  \frac{k-1}{12} \psi(N) + \frac{7}{6} 2^{\omega(N)} + 1 \\
        &\leq  \frac{k-1}{12} \psi(N) + \frac{7}{6} \psi(N)(k-1) + \psi(N)(k-1) \\
        &=  \frac94 \cdot \psi(N) (k-1) ,
    \end{align}
    which yields
    \begin{align}
        \frac{\Tr T_1}{(k-1)\psi(N)^2}
        \leq \frac{9}{4\psi(N)} 
        \leq \frac{9}{4N},
    \end{align}
    completing the proof.   
\end{proof}

We are now ready to prove Theorem \ref{thm:T4}.
{
\renewcommand{\thetheorem}{\ref{thm:T4}}
\begin{theorem}
    Suppose that $\gcd(N,4)=1$ and that $k \geq 2$ is even.
    Then  $a_2(4,N,k)$ is negative or zero only for the pairs $(N,k)$ given in \cite[Table $m=4$]{ross-code}.
\end{theorem}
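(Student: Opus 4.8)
The plan is to mirror the proof of Theorem \ref{thm:T3}, now combining the four bounds established in Lemmas \ref{lem:trace-T4-bound1}--\ref{lem:trace-T1-bound2}. Since $m=4$ is a perfect square, Proposition \ref{prop:main-theorem-perfect-square} already guarantees that $a_2(4,N,k)$ is positive for all but finitely many pairs $(N,k)$; the task here is only to pin down the exceptional set explicitly. The natural normalization is to divide the expression \eqref{eqn:a2-coeff-T4} by $\tfrac12(k-1)4^k\psi(N)^2$, so that the dominant term $(\Tr T_4)^2$ contributes the growing quantity $\tfrac{k-1}{2304}$ while the remaining three terms contribute only error.

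First I would rewrite, using $2^{k-1}=2^k/2$ and $4^{k-1}=4^k/4$ to line up the exponents,
\[
\frac{2\,a_2(4,N,k)}{(k-1)4^k\psi(N)^2}
= \frac{(\Tr T_4)^2}{(k-1)4^k\psi(N)^2}
- \frac{\Tr T_{16}}{(k-1)4^k\psi(N)^2}
- \frac12\cdot\frac{\Tr T_4}{(k-1)2^k\psi(N)^2}
- \frac14\cdot\frac{\Tr T_1}{(k-1)\psi(N)^2}.
\]
Applying the lower bound of Lemma \ref{lem:trace-T4-bound1} to the first term (valid once $\tfrac{k-1}{48}\ge 7\theta_3(N)+\tfrac14\theta_1(N)$) and the upper bounds of Lemmas \ref{lem:trace-T16-bound}, \ref{lem:trace-T4-bound2}, and \ref{lem:trace-T1-bound2} to the remaining three, I obtain
\[
\frac{2\,a_2(4,N,k)}{(k-1)4^k\psi(N)^2}
\;\geq\; \frac{k-1}{2304} - \frac{7}{24}\theta_3(N) - \frac{1}{96}\theta_1(N) - \frac{5187}{192\,N},
\]
where the final constant collects $\tfrac{4309}{192}+\tfrac{385}{96}+\tfrac{9}{16}=\tfrac{5187}{192}$ over $N$.

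Next I would argue positivity. Since $k-1\ge 1$, the right-hand side is positive as soon as the combined error falls below $\tfrac{k-1}{2304}$. Each error term depends only on $N$ and tends to $0$ as $N\to\infty$, with the explicit thresholds tabulated in Lemma \ref{lem:omega-over-psi-bounds} together with the trivial decay of $1/N$. So, exactly as in Theorem \ref{thm:T3}, I would run the staircase argument: once $N$ exceeds the largest threshold in the table the combined error is already $<\tfrac{1}{2304}$, forcing $a_2(4,N,k)>0$ for every $k\ge 2$; and for each smaller threshold value of $N$ the combined error is bounded by an explicit constant, so $a_2(4,N,k)>0$ once $k$ exceeds a corresponding explicit bound. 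Along the way one checks that the side condition $\tfrac{k-1}{48}\ge 7\theta_3(N)+\tfrac14\theta_1(N)$ of Lemma \ref{lem:trace-T4-bound1} holds throughout these ranges, folding any finitely many violating pairs into the final check. This confines every pair with $a_2(4,N,k)\le 0$ to a finite, explicitly bounded box in the $(N,k)$-plane, which I would then sweep by direct computation to produce the list recorded in \cite[Table $m=4$]{ross-code}.

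The main obstacle is not conceptual but bookkeeping. The four error constants here are larger than in the $m=3$ case, so the staircase thresholds on $k$ are correspondingly larger and the finite box to be checked by computer is bigger. Extra care is also needed because the lower bound on $(\Tr T_4)^2$ is only available under the side condition relating $k$ to $N$; the edge cases where that condition fails cannot be dispatched by the asymptotic estimate and must instead be absorbed into the exhaustive verification.
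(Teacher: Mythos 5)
Your proposal follows essentially the same route as the paper: the same normalization by $\tfrac{1}{2}(k-1)4^k\psi(N)^2$, the same four lemmas applied to the four terms of \eqref{eqn:a2-coeff-T4}, the same combined error constant (your $\tfrac{5187}{192}$ equals the paper's $\tfrac{1729}{64}$), and the same staircase-plus-computer-check conclusion, including the correct handling of the side condition from Lemma \ref{lem:trace-T4-bound1}. No substantive differences.
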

\addtocounter{theorem}{-1}
}
\begin{proof} 
    Assume that the condition $\frac{k-1}{48} \geq 7 \theta_3(N) + \frac14 \theta_1(N)$ from Lemma \ref{lem:trace-T4-bound1} is satisfied. Then by \eqref{eqn:a2-coeff-T4} and Lemma \ref{lem:trace-T4-bound1},
    \begin{align}
        &\ \ \ \  a_2(4,N,k)\\
        &= \frac{1}{2} \lrb{ (\Tr T_4)^2 - \Tr T_{16} - 2^{k-1} \Tr T_4 - 4^{k-1} \Tr T_1 } \\ 
        &= \frac{(k-1)4^k \psi(N)^2}{2} 
        \lrb{ 
            \frac{(\Tr T_4)^2}{ (k-1)4^k \psi(N)^2 } 
            - 
            \frac{\Tr T_{16}}{ (k-1)4^k \psi(N)^2 } - 
            \frac12 \frac{\Tr T_4}{(k-1)2^k \psi(N)^2} 
            - 
            \frac14 \frac{\Tr T_1}{ (k-1) \psi(N)^2}
        } \\ 
        &\geq \frac{(k-1)4^k \psi(N)^2}{2} \Bigg[
            \frac{k-1}{2304} - \frac{7}{24} \theta_3(N) - \frac{1}{96} \theta_1(N)
            - 
            \frac{\Tr T_{16}}{ (k-1)4^k \psi(N)^2 } \\
            & \qquad\qquad\qquad\qquad\qquad\qquad
            - 
            \frac12 \frac{\Tr T_4}{(k-1)2^k \psi(N)^2} 
            - 
            \frac14 \frac{\Tr T_1}{ (k-1) \psi(N)^2} \Bigg] \\
        &= \frac{(k-1)4^k \psi(N)^2}{2} \lrb{ \frac{k-1}{2304} - E(N,k) }, 
        \label{eqn:a2-4-final-eqn}
    \end{align}
    where $E(N,k)$ denotes the five error terms above. In particular, from Lemmas \ref{lem:trace-T16-bound}, \ref{lem:trace-T4-bound2}, and \ref{lem:trace-T1-bound2},
    \begin{align}
        E(N,k) &= 
        \frac{7}{24} \theta_3(N) 
        + \frac{1}{96} \theta_1(N) 
        + \frac{\Tr T_{16}}{ (k-1)4^k \psi(N)^2 }
        + \frac12 \frac{\Tr T_4}{(k-1)2^k \psi(N)^2} 
        + \frac14 \frac{\Tr T_1}{ (k-1) \psi(N)^2} \\
        &\leq
        \frac{7}{24} \theta_3(N) 
        + \frac{1}{96} \theta_1(N)
        + \frac{4309}{192N} 
        + \frac12 \cdot \frac{385}{48N}
        + \frac14 \cdot \frac{9}{4N} \\
        &= \frac{7}{24} \theta_3(N) 
        + \frac{1}{96} \theta_1(N)
        + \frac{1729}{64N}.
    \end{align}

    For $N \geq 2,\!700,\!000$, we have the bounds  $\theta_1(N) \leq 0.0265$ and $\theta_3(N) \leq 0.000015$ given in Lemma \ref{lem:omega-over-psi-bounds}. Note for such $N$, the condition $\frac{k-1}{48} \geq 7 \theta_3(N) + \frac14 \theta_1(N)$ from Lemma \ref{lem:trace-T4-bound1} is satisfied for all $k \geq 2$. 
    Additionally, 
    \begin{align}
        E(N,k) &\leq \frac{7}{24} \cdot 0.000015 
        + \frac{1}{96} \cdot 0.0265
        + \frac{1729}{64 \cdot 2,\!700,\!000}  \\
        &\leq 0.000291,
    \end{align}
    which is $< \frac{k-1}{2304}$ for all $k \geq 2$. By \eqref{eqn:a2-4-final-eqn}, this shows that $a_2(4,N,k) > 0$ for $N \geq 2,\!700,\!000$ and $k \geq 2$. 

    Utilizing the table in Lemma \ref{lem:omega-over-psi-bounds}, an identical argument using $150,\!000$, $8,\!800$, $571$, $43$, and $1$ as the bounds for $N$, shows that $a_2(4,N,k) > 0$ for $N \geq 150,\!000, k \geq 4$; $N \geq 8,\!800,\ k \geq 14$; $N \geq 571, k \geq 124$; $N \geq 43, k \geq 1498$; and $N \geq 1, k \geq 62942$. 

    We then check the finite number of cases left by computer, which yields the complete list given in \cite[Table $m=4$]{ross-code}. (The table contains $164$ pairs, and was too large to include here.)  See \cite{ross-code} for the code. 
\end{proof}

Now, the same methods here would also work for the perfect square case of $m=1$. But this would just be proving a result that is already known. $T_1(N,k)$ is just the identity operator, so $a_2(1,N,k)$ will be positive precisely for the pairs $(N,k)$ for which $\dim S_k(\Gamma_0(N)) \geq 2$. And the complete list of pairs $(N,k)$ where $\dim S_k(\Gamma_0(N)) = 0 \text{ or }1$, was already computed in \cite[Tables 2.6, 2.7]{ross}




%

\section{Discussion} \label{sec:discussion}
We first make some observations about the proofs of Propositions \ref{prop:main-theorem-not-perfect-square} and \ref{prop:main-theorem-perfect-square}. One of the last steps in these two proofs was to show that for any given $(m,N,\chi)$, the $a_2$ coefficient is nonvanishing for sufficiently large $k$. It is worth noting that is also what one would expect from the well-known Skolem-Mahler-Lech theorem \cite[Section 2.1]{Everest2003}. For any fixed $(m,N,\chi)$, $\Tr T_m(N,k,\chi)$ satisfies a linear recurrence in $k$; this fact follows from the Eichler-Selberg trace formula \eqref{eqn:eichler-selberg-trace-formula}. So by Lemma \ref{lem:a2-coeff-formula}, $a_2(m,N,k,\chi)$ also satisfies a linear recurrence in $k$. Thus by Skolem-Mahler-Lech, one would expect that for any given $(m,N,\chi)$, $a_2(m,N,k,\chi)$ only vanishes for finitely many $k$. 

Now, one might ask if the same result holds including $m$ as well, i.e. if $a_2(m,N,k,\chi)$ only vanishes for finitely many $(m,N,k,\chi)$. It turns out this is not true for a trivial reason. When $\dim S_k(\Gamma_0(N),\chi) < 2$, the characteristic polynomial for $T_m(N,k,\chi)$ will have degree less than $2$. Thus the second coefficient will neccessarily be $0$ for all $m$ in this case.

So one might instead ask if $a_2(m,N,k,\chi)$ vanishes only for finitely many $(m,N,k,\chi)$ when we restrict to the case of $\dim S_k(\Gamma_0(N),\chi) \geq 2$. It turns out this is not true either. For example, $S_2(\Gamma_0(23))$ has dimension $2$. And the two Hecke eigenforms for $S_2(\Gamma_0(23))$ both happen to have $43$rd Fourier coefficient vanish \cite{lmfdb-23-2-a-a}. Thus for any $m$ with exactly one factor of $43$, these two eigenforms will both have $m$-th Fourier coefficient vanish. So for such $m$, $T_{m}(23,2)$ will have two eigenvalues of $0$, and hence will have characteristic polynomial $x^2$. This gives an infinite family of vanishing $a_2$ coefficients.

One could also ask if the result still holds if we allow $N$ not coprime to $m$. In fact, this appears to not be true either. For example, it appears that for $j \geq 2$, all the eigenvalues of $T_3(3^j,4)$ vanish. This would give an infinite family of $N$ for which $a_2(3,N,4)$ vanishes. One can find many other similar examples. 

Next, we make some remarks about Table \ref{table:T3-pairs-dim-a2}, Table \ref{table:T2-pairs-dim-a2}, and \cite[Table $m=4$]{ross-code}.
In principle, one could run the same computations for general character $\chi$ and calculate the finitely many triples $(N,k,\chi)$ for which $a_2(m,N,k,\chi)$ vanishes.
However, the number of $(N,k,\chi)$ to check makes this computationally infeasible.
Even with only considering trivial character, we used the Clemson University PALMETTO cluster, and it took several hours to check all the possible pairs $(N,k)$.

Additionally, observe that many of the entries in these three tables come from $(N,k)$ for which $\dim S_k(\Gamma_0(N))$ is $0$ or $1$ (so the $a_2$ coefficient trivially vanishes).
For $m=2$, $32$ out of $35$ entries in Table \ref{table:T2-pairs-dim-a2} come from $(N,k)$ for which $\dim S_k(\Gamma_0(N)) = 0 \text{ or }1$. All of the remaining $3$ nontrivial entries have $a_2$ coefficient vanish.
For $m=3$, $38$ out of $52$ entries in Table \ref{table:T3-pairs-dim-a2} come from $(N,k)$ for which $\dim S_k(\Gamma_0(N)) = 0 \text{ or }1$. Of the remaining $14$ nontrivial entries, $7$ have positive $a_2$ coefficient and $7$ have vanishing $a_2$ coefficient.
For $m=4$, $32$ out of $164$ entries in \cite[Table $m=4$]{ross-code} come from $(N,k)$ for which $\dim S_k(\Gamma_0(N)) = 0 \text{ or }1$. Of the remaining $132$ nontrivial entries, $129$ have negative $a_2$ coefficient and $3$ have vanishing $a_2$ coefficient.

Looking at the $(N,k)$ given in these three tables, one might ask if $a_2(m,N,k)$ is non-trivially vanishing (i.e. is vanishing when $\dim S_k(\Gamma_0(N)) \geq 2$) only for $k = 2$. In fact, this is not true: for $m=7, N=12, k=4$, $a_2(7,12,4) = 0$. However this is the only counter-example that we could find, and we speculate that it is the only such counter-example.

One might also ask if the $a_2$ coefficient never non-trivially vanishes in the level one case. In fact, a more general result was conjectured in \cite{clayton-et-al}.
\begin{conjecture}[{\cite[Conjecture~5.1]{clayton-et-al}}]
    Let $m \geq 1$, $k \geq 2$ be even, and $n = \dim S_k(\Gamma_0(1))$.
    Write the characteristic polynomial for $T_m(1,k)$ as 
    $$T_m(1,k)(x) = x^n - a_1(m,1,k) x^{n-1} + a_2(m,1,k) x^{n-2} - \ldots + (-1)^n a_n(m,1,k).$$
    Then $a_i(m,1,k) \neq 0$ for $1 \leq i \leq n$.
\end{conjecture}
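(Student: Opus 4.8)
The plan is to recognize at the outset that this conjecture lies beyond the present circle of ideas for one fundamental reason: the case $i=1$ asserts $a_1(m,1,k) = \Tr T_m(1,k) \neq 0$, which for $k=12$ is precisely Lehmer's conjecture $\tau(m) \neq 0$ \cite{Lehmerconjecture}, and more generally is the level-one Generalized Lehmer Conjecture of \cite{rouse}, open beyond $m=2$ \cite{rouse} and $m=3$ \cite{chiriac}. So I would aim only at the layers $i \geq 2$, treating the full statement as conditional on the $i=1$ layer, and I would flag this dependence explicitly rather than claim a complete proof.

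For the accessible layers the strategy extends the method behind Theorems \ref{thm:main} and \ref{thm:main-trivial-char} to higher elementary symmetric functions. First I would use Newton's identities to write $a_i(m,1,k) = e_i(\lambda_1,\dots,\lambda_n)$ as a universal polynomial in the power sums $p_j = \sum_\ell \lambda_\ell^j = \Tr\big(T_m(1,k)^j\big)$ for $1 \le j \le i$. Next, iterating the Hecke multiplication relation $T_m^2 = \sum_{d\mid m} d^{k-1} T_{m^2/d^2}$ (for $N=1$ and $\chi$ trivial) expresses each $p_j$ as an explicit integer combination, with coefficients of the form $d^{k-1}$, of traces $\Tr T_{m'}(1,k)$ over a finite, explicitly determined set of indices $m'$. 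Thus $a_i(m,1,k)$ becomes a finite expression in Hecke traces, each evaluated by the Eichler--Selberg formula \eqref{eqn:eichler-selberg-trace-formula}.

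I would then run the dominant-term analysis of Section \ref{sec:estimate} in the $N=1$ case. By Lemma \ref{lem:main-lemma-perfect-square} the main contribution to $\Tr T_{m'}(1,k)$, when $m'$ is a perfect square, is $A_{1,m'} = \frac{k-1}{12}\psi(1)(m')^{k/2-1}$, of order $k\,(m')^{k/2}$, while non-square indices contribute only $O\big((m')^{k/2}\big)$. Expanding the Newton polynomial and collecting powers of $(m')^{k/2}$ and of $k$, the key step is to isolate a single dominant monomial of definite sign as $k \to \infty$; provided it does not cancel, this forces $a_i(m,1,k) \neq 0$ for all large $k$, and the estimates of Section \ref{sec:estimate} make the threshold effective, leaving finitely many $k$ to check by computer, exactly as in Theorems \ref{thm:T3} and \ref{thm:T4}.

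The main obstacle, beyond the unavoidable $i=1$ barrier, is twofold. First, one must verify that the dominant monomial in the Newton expansion genuinely survives with a nonzero sign for every $i$; unlike the clean $i=2$ case, where $(\Tr T_m)^2$ is of lower order and the square-index traces $\Tr T_{m^2/d^2}$ supply a single main term, the combinatorics of $e_i$ in terms of power sums could in principle produce leading-order cancellation, and ruling this out uniformly in $i$ is delicate. Second, the level-one setting fixes $N=1$, so the $N \to \infty$ lever of Lemma \ref{lem:omega-over-psi-bounds} is unavailable and one has only the $k$-asymptotic for fixed $m$; the argument therefore does not run uniformly in $m$, and the finitely many small $(m,k)$ exceptions must be handled by genuine computation, whose cost grows rapidly with $i$ and $m$. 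For these reasons I expect the proposal to settle $a_i \neq 0$ for fixed $m$, fixed $i \geq 2$, and all sufficiently large $k$, but not the conjecture in full.
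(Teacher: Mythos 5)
You have correctly identified the essential point: this statement is a \emph{conjecture} (quoted from \cite{clayton-et-al}), and the paper offers no proof of it. There is therefore nothing to compare your attempt against; the paper's own contributions (Theorems \ref{thm:main}--\ref{thm:T4}) concern only the second coefficient, and even there only establish nonvanishing for all but finitely many $(N,k,\chi)$ with $m$ fixed. Your refusal to claim a proof, and your identification of the $i=1$ layer as the Lehmer / Generalized Lehmer obstruction \cite{Lehmerconjecture, rouse, chiriac} (indeed $a_1(m,1,12)=\Tr T_m(1,12)=\tau(m)$), is exactly right and consistent with how the paper itself treats the statement.

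Regarding your partial program for $i\geq 2$: it is a sensible extension of the paper's method (Newton's identities reduce $e_i$ to power sums $\Tr(T_m^j)$, the Hecke multiplication relation reduces these to traces $\Tr T_{m'}$, and the Eichler--Selberg formula \eqref{eqn:eichler-selberg-trace-formula} with the square-index main term of Lemma \ref{lem:main-lemma-perfect-square} supplies a candidate dominant term). But you should be explicit that even if the dominant-monomial analysis goes through, what it yields is nonvanishing of $a_i(m,1,k)$ for \emph{fixed} $m$ and $i$ and all sufficiently large $k$ --- whereas the conjecture quantifies over all $m\geq 1$ for each fixed $k$. That quantifier order is the real barrier: for fixed $k$ the top case $i=n$ asserts $\det T_m(1,k)\neq 0$ for every $m$, i.e.\ that no normalized eigenform of level one and weight $k$ has a vanishing $m$-th Fourier coefficient, which is a Lehmer-type statement for each individual eigenform and is not touched by any asymptotic-in-$k$ argument. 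So the ``finitely many exceptions to check by computer'' step, which closes the argument in Theorems \ref{thm:T3} and \ref{thm:T4}, is unavailable here: for each fixed $k$ below your threshold there remain infinitely many $m$ to handle. Your proposal is honest about most of this, but it would be sharper to state plainly that the method cannot, even in principle, yield the conjecture for any single value of $k$, rather than framing the shortfall as a matter of computational cost.
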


\bibliographystyle{plain}
\bibliography{bibliography.bib}

\end{document}